\def\@rmrk#1#2{\refstepcounter
    {#1}\@ifnextchar[{\@yrmrk{#1}{#2}}{\@xrmrk{#1}{#2}}}
\makeatletter\@addtoreset{equation}{section}\makeatother
 \newfont{\bfit}{cmbxti10 scaled 1200}
\renewcommand{\d}{{\rm d}}
 \newcommand{\eps}{\varepsilon}
 \newcommand{\R}{\mathbb{R}}
 \newcommand{\N}{\mathbb{N}}
 \newcommand{\E}{\mathbb{E}}
 \renewcommand{\P}{\mathbb{P}}
 \def\1{{\mathchoice {1\mskip-4mu\mathrm l} 
{1\mskip-4mu\mathrm l}
{1\mskip-4.5mu\mathrm l} {1\mskip-5mu\mathrm l}}}
 \newcommand{\Mcal}{{\mathcal M}}
 \newcommand{\skrix}{{\mathcal X}}
\newcommand{\weak}{{\Rightarrow}}
\newcommand{\vague}{{\hookrightarrow}}
\newcommand{\heap}[2]{\genfrac{}{}{0pt}{}{#1}{#2}}
\newcommand{\ssup}[1] {{\scriptscriptstyle{({#1}})}}
\renewcommand{\subsection}{\secdef \subsct\sbsect}
\newcommand{\subsct}[2][default]{\refstepcounter{subsection}
\vspace{0.15cm}
{\flushleft\bf \arabic{section}.\arabic{subsection}~\bf #1  }
\nopagebreak\nopagebreak}
\newcommand{\sbsect}[1]{\vspace{0.1cm}\noindent
{\bf #1}\vspace{0.1cm}}
\newtheorem{theorem}{Theorem}[section]
\newtheorem{lemma}[theorem]{Lemma}
\newtheorem{cor}[theorem]{Corollary}
\newtheorem{prop}[theorem]{Proposition}
\newtheoremstyle{thm}{1.5ex}{1.5ex}{\itshape\rmfamily}{}
{\bfseries\rmfamily}{}{2ex}{}
\newtheoremstyle{rem}{1.3ex}{1.3ex}{\rmfamily}{}
{\itshape\rmfamily}{}{1.5ex}{}
\theoremstyle{rem}
\newtheorem{remark}{{\slshape\sffamily Remark}}[]
\def\thebibliography#1{\section*{References}
  \list%
  {\arabic{enumi}.}
    {\settowidth\labelwidth{[#1]}\leftmargin\labelwidth
    \advance\leftmargin\labelsep
    \parsep0pt\itemsep0pt
    \usecounter{enumi}}
    \def\newblock{\hskip .11em plus .33em minus .07em}
    \sloppy                   
    \sfcode`\.=1000\relax}
\begin{document}
\title[Brownian occupation measures, compactness and large deviations]
{\large Brownian occupation measures, compactness and large deviations}
\author[Chiranjib Mukherjee and S.R.S. Varadhan]{}
\maketitle
\thispagestyle{empty}
\vspace{-0.5cm}

\centerline{\sc Chiranjib Mukherjee\footnote{Courant Institute of Mathematical Sciences and WIAS Berlin, 251 Mercer Street, New York 10012, {\tt mukherjee@cims.nyu.edu}},
S. R. S. Varadhan, \footnote{Courant Institute of Mathematical Sciences, 251 Mercer Street, New York 10012, {\tt varadhan@cims.nyu.edu}. 
}}
\renewcommand{\thefootnote}{}
\footnote{\textit{AMS Subject
Classification:} 60J65, 60J55, 60F10.}
\footnote{\textit{Keywords:} Brownian occupation measures, shift compactness,
large deviations.}

\vspace{-0.5cm}
\centerline{\textit{Courant Institute New York and WIAS Berlin, Courant Institute New York}}
\vspace{0.2cm}

\begin{center}

\end{center}

\begin{quote}{\small {\bf Abstract: }
In proving large deviation estimates,  the lower bound  for open sets and upper bound for compact sets are essentially  local estimates. On the other hand,   the upper bound for closed sets  is  global and   compactness of  space  or an exponential tightness estimate  is needed to establish it.  In dealing with  the occupation measure $L_t(A)=\frac{1}{t}\int_0^t{\1}_A(W_s) \d s$ of the $d$ dimensional Brownian motion, which is  not positive recurrent, there is no possibility of exponential tightness. The space of probability distributions $\mathcal {M}_1(\R^d)$ can be  compactified by replacing the usual topology of weak convergence by the vague toplogy, where the space  is treated as the dual of continuous functions with compact support. This is essentially the one point compactification of $\R^d$ by adding a point at $\infty$ that results in the compactification of $\mathcal M_1(\R^d)$ by allowing some mass to escape to the point at $\infty$. If one were to  use only test functions that are continuous and vanish at $\infty$ then the compactification results in the space of sub-probability distributions $\mathcal {M}_{\le 1}(\R^d)$ by ignoring the mass at $\infty$.

The main drawback of this compactification is that it ignores the underlying  translation invariance. More explicitly, we may be interested in the space of equivalence classes of orbits $\widetilde{\mathcal M}_1=\widetilde{\mathcal M}_1(\R^d)$    under the  action of the translation group $\R^d$ on  $\mathcal M_1(\R^d)$. There are problems for which it is natural to compactify this space of orbits. We will provide such a compactification, prove a large deviation principle there and give an application to a relevant problem.
}
\end{quote}

\section{Motivation and Introduction} \label{Introduction}

\subsection{Motivation.} We start with the Wiener measure $\P$ on $\Omega= C_0\big([0,\infty); \R^d\big)$ corresponding to the $d$-dimensional Brownian motion $W=(W_t)_{t\geq 0}$ starting from the origin. Our result is motivated by the following set up. Let $L_t$ denote the normalized occupation measure of the Brownian motion until time $t$, i.e.,
\begin{equation}
L_t=\frac 1 t \int_0^t \d s \, \delta_{W_s}.
\end{equation}
This is a random element of $\Mcal_1=\Mcal_1(\R^d)$, the space of probability measures on $\R^d$.
We are interested in the transformed measure
\begin{equation}\label{pathmeasure}
\widehat\P_t(A)=\frac 1{Z_t} \E\bigg\{\1_A\, \exp\{tH(L_t)\}\bigg\}
\end{equation}
with $A$ being a measurable set in the path space of the Brownian motion and
\begin{equation}\label{Hdef}
H(\mu)= \int\int_{\R^d\times \R^d} {V(x-y)} \mu(\d x)\mu(\d y).
\end{equation}
Here $V(\cdot)$ is a continuous function on $\R^d$ vanishing at infinity and 
$$
Z_t=\E\big\{ \exp\{tH(L_t)\}\big\}
$$
is the normalizing constant or the {\it{partition function}}. For $d=3$ and $V(x)= \frac 1{|x|}$, it is known (see \cite{DVP}) that,
\begin{equation}\label{freeenergy}
\lim_{t\to\infty}\frac 1t\log \E \bigg\{\exp\{tH(L_t)\}\bigg\}
=\sup_{\heap{\psi\in H^1(\R^d)}{\|\psi\|_2=1}} 
\Bigg\{\int_{\R^d}\int_{\R^d}\d x\d y\,V(x-y) {\psi^2(x) \psi^2(y)}-\frac 12\big\|\nabla \psi\big\|_2^2\Bigg\},
\end{equation}
where $H^1(\R^d)$ is the usual Sobolev space of square integrable
functions in with their gradient in $L^2(\R^d)$.
For $d=3,\,V(x)=\frac 1{|x|}$, this variational formula has also been analyzed by Lieb (see \cite{L}) who proved that there is a maximizer which is unique except for spatial translations. In other words, if $\mathfrak m$
denotes the set of maximizing densities, then
\begin{equation}\label{shiftunique}
\mathfrak m=  \big\{\mu_0 \star \delta_x\colon x\in \R^3\big\},
\end{equation}
where $\mu_0$ is a probability measure with a density $\psi_0^2$ so that $\psi_0$ maximizes the variational problem \eqref{freeenergy}.
\medskip\noindent

Given \eqref{freeenergy} and \eqref{shiftunique}, we expect that the asymptotic distribution of $L_t$ under $\widehat \P_t$ to be  concentrated around 
 $\mathfrak m$. Indeed, we would like to show
that for very $\eps>0$, 
\begin{equation}\label{tube}
\lim_{t\to\infty}\widehat\P_t\big\{L_t\notin U_\eps(\mathfrak m)\big\} =0
\end{equation}
where $U_\eps(\mathfrak m)$ is a (weak) neighborhood of $\mathfrak m$. In fact, we can write
$$
\begin{aligned}
\widehat\P_t\big\{L_t\notin U_\eps(\mathfrak m)\big\}
&=\frac {\E\left[\1_{L_t\notin U_\eps(\mathfrak m)}\,\exp\big(t H(L_t)\big)\right]} {\E\left[\exp\big(t H(L_t)\big)\right]}
\\
&=\frac  {\E\left[\exp\big(t F(L_t)\big)\right]}{\E\left[\exp\big(t H(L_t)\big)\right]}
\end{aligned}
$$ 
where
\begin{equation}\label{Fdef} 
\begin{aligned}
F(\mu)=
\begin{cases}
H(\mu)={\int\int}_{\R^d\times\R^d} V(x-y){{\mu(\d x)}{\mu(\d y)}} \qquad \mbox{if}\, \mu\notin U_\eps(\mathfrak m)
\\
-\infty \qquad\quad\qquad\qquad\qquad\qquad\qquad\qquad\qquad \mbox{else}.
\end{cases}
\end{aligned}
\end{equation}
Let us pretend that we have a {\it{strong}} Donsker-Varadhan large deviation principle for $L_t$ in $\Mcal_1(\R^d)$ under the weak topology, see \eqref{DVub}-\eqref{DVlb} and the following remarks for a precise definition. Then, using Varadhan's lemma (ignoring the lack of upper semicontinuity of $F$
coming from the singularity for $\small{V(x)=1/{|x|}}$) we could (formally) conclude that $\widehat\P_t\big(L_t\notin U_\eps(\mathfrak m)\big)$ decays exponentially fast in $t$.

However, the lack of a strong large deviation principle turns out a to be crucial issue. To circumvent this problem, the space $\mathcal {M}_1(\R^d)$ has to be ``compactified" . This can be done by replacing the usual topology of weak convergence by the ``vague toplogy", where the space  is treated as the dual of continuous functions with compact support. This is essentially the one point compactification of $\R^d$ by adding a point at $\infty$ that results in the compactification of $\mathcal M_1(\R^d)$ by allowing some mass to escape to the point at $\infty$. If one were to  use only test functions that are continuous and vanish at $\infty$ then the compactification results in the space of sub-probability distributions $\mathcal {M}_{\le 1}(\R^d)$ by ignoring the mass at $\infty$. 

Let us also mention that, for \eqref{freeenergy}, in \cite{DVP}, the lack of compactness of the state space was handled by replacing Brownian motion by Ornstein-Uhlenbeck (O-U) process on $\R^d$ whose occupation measure, unlike Brownian motion, satisfies a strong large deviation principle. Exploiting the positive definiteness of $V(x)=\frac 1{|x|}$ the authors show that the total mass $\E\big\{\exp\{t H(L_t)\}\big\}$ is dominated by the same expectation with respect to the Ornstein-Uhlenbeck process. This monotonicity combined with strong large deviations for the O-U process proves \eqref{freeenergy}. However, no such monotonicity is available to us in the complement of the neighborhood of $\mathfrak m$ (i.e., for the term $\E\left[\1_{L_t\notin U_\eps(\mathfrak m)}\,\exp\big(t H(L_t)\big)\right]$). Another possibility is to replace $\R^d$ by a large torus and ``fold" $L_t$ in the torus and use a similar monotonicity of the total masses (see \cite{DV75}, \cite{BS97}). Although these methods work well for deriving asymptotic behavior of the partition function, questions on the path measures $\widehat\P_t$ can not be handled so well in this manner. In particular, these methods ignore the underlying {\it{translation invariance}} of some relevant models from statistical mechanics, models which depend on shift-invariant functionals of the occupation measures $L_t$, like the functional $H(\mu)= H(\mu\star \delta_x)$ for all $x\in \R^d$, defined in \eqref{Hdef}. Motivated by this, we naturally consider the quotient space 
$$
\widetilde\Mcal_1(\R^d)= \Mcal_1(\R^d)\big/ \sim
$$ 
under spatial shifts and are led to a robust theory of compactification of this space. Let us briefly sketch the main idea here.

\subsection{Translation invariant compactfication: The central idea.}  Note that $\Mcal_1(\R^d)$ fails to be compact in the weak topology for several reasons. For instance, if we take a Gaussian with a very large variance, the mass can spread very thin and totally disintegrate into dust. Also, a mixture like $\frac{1}{2}(\mu \star \delta_{a_n}+\mu\star  \delta_{-a_n})$ splits into two (or more) widely separated pieces as $a_n\to \infty$ . To compactify this space we should be allowed to ``center" each piece separately as well as to allow some mass to be ``thinly spread and disappear". 


The intuitive idea, starting with a sequence of probability distributions $(\mu_n)_n$ in $\R^d$ is to identify a compact region where $\mu_n$ has its largest accumulation of mass. This is given by its {\it{concentration function}} defined by 
$$
q_n(r)= \sup_{x\in \R^d} \mu_n\big(B_r(x)\big).
$$
By choosing subsequences, we can assume that $q_n(r)\to q(r)$ as $n\to\infty$ and $q(r)\to p_1 \in [0,1]$ as $r\to\infty$. Then there is a shift $\lambda_n=\mu_n\star \delta_{a_n}$ which converges along a subsequence vaguely to a sub-probability measure $\alpha_1$ of mass $p_1$. This means $\lambda_n$ can be written as $\alpha_n+\beta_n$ so that $\alpha_n \weak\alpha_1$ weakly and we recover the partial mass $p_1\in [0,1]$. We peel off 
$\alpha_n$ from $\lambda_n$ and repeat the same process for $\beta_n$ to get convergence along a further subsequence. We go on recursively to get
convergence of one component at a time along further subsequences in the space of sub-probability measures, {\it{modulo spatial shifts}}. The picture is, $\mu_n$ roughly concentrates on 
{\it{widely separated}} compact pieces of masses $\{p_j\}_{j\in\N}$ while the rest of the mass $1-\sum_j p_j$ leaks out.

In other words, given any sequence $\widetilde\mu_n$ of equivalence classes in $\widetilde\Mcal_1(\R^d)$, which is the quotient space of $\Mcal_1(\R^d)$ under spatial shifts, there is a subsequence which converges (in a sense which we do not make precise yet) to an element $\{\widetilde\alpha_1,\widetilde\alpha_2,\dots\}$, a collection of equivalence classes of sub-probabilities $\alpha_j$ of masses $0\leq p_j\leq 1$, $j\in\N$.$^{1}$\footnotetext{$^1$For example, let $\mu_n$ be a sequence which is a mixture of three Gaussians, one with mean $0$ and variance $1$, one with mean $n$ and variance $1$ and one with mean $0$ and variance $n$, each with equal weight $\frac 13$. Then the limiting object is the collection $\{\widetilde\alpha_1,\widetilde\alpha_1\}$, where $\widetilde\alpha_1$ is the equivalence class of a Gaussian with variance $1$ and weight $\frac 13$.}
The space of such collections of equivalence classes is the compactfication of $\widetilde\Mcal_1(\R^d)$ and in this space we are able to prove a strong large deviation principle for the distribution of the equivalence classes $\widetilde L_t$ of $L_t$. This, combined with the shift invariant structure of $V(x-y)$, enables us to prove \eqref{tube}.

Finally,  although we were  motivated by asymptotic study of path measures of mean-field type interactions for Brownian motion, it can also be applied to study a wider class of  problems that involve translation invariant functionals of processes with independent increments.

Let us describe the organization of the rest of the article. In section \ref{preliminaries}, we collect some basic facts about weak and vague convergence, introduce a class of relevant test functions and characterize notions 
of {\it{total disintegration of measures}} as well as measures being {\it{widely separated}} in terms of test integrals with respect to the corresponding test functions. In Section \ref{compactification}, we introduce a space
$\widetilde {\mathcal X}$ and a metric $\mathbf D$ giving rise a notion of topology and convergence in this space. Here we also prove that $\widetilde {\mathcal X}$ is the desired compactfication of the quotient space
$\widetilde{\Mcal}_1(\R^d)$. Section \ref{sectionLDP} is devoted to proving a strong large deviation principle for the distribution of the equivalence class $\widetilde L_t$ in $\widetilde {\mathcal X}$ and in Section \ref{application} we provide the application to the asymptotics of the path measures $\widehat \P_t (\widetilde L_t\in \cdot)$.

\section{Topologies on measures}\label{preliminaries}
We denote by $\Mcal_1= {\Mcal_1}(\R^d)$ the space of probability distributions on $\R^d$ and by $\widetilde\Mcal_1= \Mcal_1 \big/ \sim$ the quotient space 
of $\Mcal_1$ under the action of $\R^d$ (as an additive group on $\Mcal_1$). For any $\mu\in \Mcal_1$, its {\it{orbit}} is defined by $\widetilde{\mu}=\{\mu\star\delta_x\colon\, x\in \R^d\}\in \widetilde\Mcal_1$.


\subsection{The weak and the vague topology.} We turn to two natural topologies on $\Mcal_1$. In the {\it{weak topology}}, a sequence $\mu_n$ in $\Mcal_1$ converges to $\mu$,  denoted by  $\mu_n\weak\mu$,  if
\begin{equation}\label{eqweak}
\lim_{n\to\infty}\int_{\R^d} f(x)\mu_n(\d x)=\int_{\R^d} f(x) \mu(\d x),
\end{equation}
for all bounded continuous functions on $\R^d$.  On the other hand, in the {\it{vague topology}} for the convergence of $\mu_n$ to $\mu$, denoted by 
$\mu_n\vague\mu$, we only require (\ref{eqweak}) for continuous functions with compact support. It continues to hold for continuous functions  that tend to $0$ as $|x|\to\infty$. Note that the total mass of
probability measures, which is conserved in the weak convergence, is not necessarily conserved under vague convergence-- a salient feature which distinguishes these two topologies. If we denote by $\Mcal_{\leq 1}=\Mcal_{\leq 1}(\R^d)$ the space of all sub-probability measures (non-negative measures with total mass less than or equal to one), then both topologies carry over to $\Mcal_{\leq 1}$ with the same requirements. 

We collect some standard facts as a lemma which will be relevant for us. 
\begin{lemma}\label{L1.0}
\begin{enumerate}
\item If $\mu_n\vague\mu$ in $\Mcal_{\leq 1}$, then $\mu(\R^d)\le \liminf_{n\to\infty}\mu_n(\R^d)$.
\item  If $\mu_n\vague\mu$ in $\Mcal_{\leq 1}$ and $\mu_n(\R^d)\to \mu(\R^d)$, then $\mu_n\weak\mu$ in $\Mcal_{\le 1}$.
\item While $\Mcal_1$ is a closed subset of the space $\Mcal_{\leq 1}$ in the weak topology, it is dense in $\Mcal_{\leq 1}$ in the vague topology.
\item The space $\Mcal_{\leq 1}$ is compact in the vague topology.
\end{enumerate}
\end{lemma}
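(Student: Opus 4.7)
The four assertions are all standard once one has the right cutoff functions in hand, so the plan is to choose a family $\{\phi_R\}_{R>0}\subset C_c(\R^d)$ of continuous cutoffs with $0\le\phi_R\le 1$, $\phi_R\equiv 1$ on $B_R$, and $\supp\phi_R\subset B_{R+1}$, and then use them systematically.

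For (i) I would test vague convergence against $\phi_R$: since $\phi_R\in C_c(\R^d)$, $\int\phi_R\,\d\mu_n\to\int\phi_R\,\d\mu$, and because $\int\phi_R\,\d\mu_n\le \mu_n(\R^d)$, taking $\liminf$ gives $\int\phi_R\,\d\mu\le\liminf_n\mu_n(\R^d)$. Monotone convergence as $R\to\infty$ then yields the claim. For (ii), given a bounded continuous $f$, decompose $f=f\phi_R+f(1-\phi_R)$. The first piece is in $C_c(\R^d)$, so $\int f\phi_R\,\d\mu_n\to\int f\phi_R\,\d\mu$ by vagueness. The second piece is controlled by
\[
\Bigl|\int f(1-\phi_R)\,\d\mu_n\Bigr|\le \|f\|_\infty\bigl(\mu_n(\R^d)-\tstyle\int\phi_R\,\d\mu_n\bigr),
\]
and the right side converges to $\|f\|_\infty(\mu(\R^d)-\int\phi_R\,\d\mu)$ by the mass hypothesis plus vague convergence against $\phi_R$; this tail tends to $0$ as $R\to\infty$ by monotone convergence applied to $\mu$, so an $\eps/3$ argument closes (ii).

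For (iii), closedness of $\Mcal_1$ in the weak topology is immediate by testing against the constant function $1$ (which is bounded continuous), so total mass passes to the limit. For density in the vague topology, given any $\mu\in\Mcal_{\le 1}$ with mass $p\le 1$, I would take $\mu_n:=\mu+(1-p)\delta_{ne_1}$; any $\phi\in C_c(\R^d)$ satisfies $\phi(ne_1)=0$ for $n$ large, so $\int\phi\,\d\mu_n\to\int\phi\,\d\mu$, and $\mu_n\in\Mcal_1$.

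For (iv) I would invoke Riesz--Markov to identify $\Mcal_{\le 1}$ with a subset of the unit ball of $C_0(\R^d)^\ast$. Banach--Alaoglu together with the separability of $C_0(\R^d)$ then gives a metrizable compact weak-$\ast$ topology on this ball, and I would check that $\Mcal_{\le 1}$ is weak-$\ast$ closed: non-negativity $\mu(\phi)\ge 0$ for $\phi\ge 0$ in $C_c(\R^d)$ is preserved in the limit, and the mass bound $\mu(\R^d)\le 1$ follows from (i) applied along any converging net. The main obstacle is largely cosmetic — namely being careful in (iv) that the functional-analytic compactness for $C_0$-duals really gives vague (i.e., $C_c$-) compactness on $\Mcal_{\le 1}$; this is resolved by noting that since $\Mcal_{\le 1}$ consists of finite measures, convergence against $C_c(\R^d)$ is equivalent to convergence against $C_0(\R^d)$, so no separate argument is needed.
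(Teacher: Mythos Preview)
Your proof is correct. The paper, however, provides no proof of this lemma at all: it is stated as a collection of ``standard facts'' and left unproved. Your argument supplies precisely the details the authors omit, via the natural route of cutoff functions for (i)--(ii), an escaping point mass for the density in (iii), and Riesz--Markov plus Banach--Alaoglu for (iv). One small remark on (iv): your final sentence asserting that $C_c$-convergence and $C_0$-convergence coincide on $\Mcal_{\le 1}$ is correct, but the reason is the uniform mass bound (any $\phi\in C_0$ is uniformly approximated by $\psi\in C_c$, and $|\int(\phi-\psi)\,\d\mu_n|\le\|\phi-\psi\|_\infty$), not the finiteness of each individual measure; you may want to phrase it that way.
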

We will also need the following elementary lemma.

\begin{lemma}\label{L1.1} If $\mu_n\vague \alpha$ in $\Mcal_{\leq 1}$, then $\mu_n$ can be written as
$\mu_n=\alpha_n+\beta_n$ where $\alpha_n\weak\alpha$ and $\beta_n\vague 0$.  

\end{lemma}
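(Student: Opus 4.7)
The plan is to construct the decomposition $\mu_n = \alpha_n + \beta_n$ by an explicit cutoff argument. Fix a family of continuous cutoff functions $\chi_R \in C_c(\R^d)$ with $0 \leq \chi_R \leq 1$, $\chi_R \equiv 1$ on the ball $B_R$ and $\chi_R \equiv 0$ outside $B_{R+1}$. For each $R$, the localized pieces $\chi_R \mu_n$ (defined via $d(\chi_R \mu_n) = \chi_R \, d\mu_n$) lie in $\Mcal_{\le 1}$, and, since $\chi_R \in C_c(\R^d)$, vague convergence $\mu_n \vague \alpha$ gives $\int \chi_R \, d\mu_n \to \int \chi_R \, d\alpha$ as $n \to \infty$ for each fixed $R$. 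Since $\chi_R \uparrow 1$ as $R \to \infty$, monotone convergence yields $\int \chi_R \, d\alpha \to \alpha(\R^d)$.

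The next step is a diagonal selection. For each $k \in \N$, choose $n_k$ such that for all $n \ge n_k$,
\begin{equation*}
\Big|\int \chi_k \, d\mu_n - \int \chi_k \, d\alpha\Big| < \frac{1}{k}.
\end{equation*}
We may assume $n_k$ is strictly increasing. Define $R_n = k$ when $n_k \le n < n_{k+1}$, so $R_n \to \infty$. Combining the bound above with the monotone convergence $\int \chi_{R_n} \, d\alpha \to \alpha(\R^d)$ gives
\begin{equation*}
\int \chi_{R_n} \, d\mu_n \;\longrightarrow\; \alpha(\R^d).
\end{equation*}
Now set $\alpha_n := \chi_{R_n} \mu_n$ and $\beta_n := (1 - \chi_{R_n}) \mu_n$, so $\mu_n = \alpha_n + \beta_n$.

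It remains to verify the two convergence statements. Given $f \in C_c(\R^d)$ with $\supp(f) \subset B_M$, eventually $R_n > M$, so $\chi_{R_n} f \equiv f$ on $\supp(f)$. Hence $\int f \, d\alpha_n = \int f \chi_{R_n} \, d\mu_n = \int f \, d\mu_n \to \int f \, d\alpha$, showing $\alpha_n \vague \alpha$. Combined with $\alpha_n(\R^d) = \int \chi_{R_n} \, d\mu_n \to \alpha(\R^d)$, part (ii) of Lemma \ref{L1.0} upgrades this to $\alpha_n \weak \alpha$. For the same test function $f$, $\int f \, d\beta_n = \int f (1 - \chi_{R_n}) \, d\mu_n$ vanishes for all sufficiently large $n$, so $\beta_n \vague 0$.

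There is no serious obstacle here; the only mild subtlety is calibrating the growth rate of $R_n$ via the diagonal choice so that both the total mass of $\alpha_n$ captures $\alpha(\R^d)$ in the limit (forcing the promotion from vague to weak convergence) and every compactly supported test function is eventually swallowed by $\chi_{R_n}$ (forcing $\beta_n$ to vanish vaguely).
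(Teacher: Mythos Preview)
Your proof is correct and follows essentially the same route as the paper: choose radii $R_n\to\infty$ slowly enough (via a diagonal selection) that the mass of the localized piece converges to $\alpha(\R^d)$, then invoke Lemma~\ref{L1.0}(ii) to upgrade vague to weak convergence of $\alpha_n$. The only cosmetic difference is that the paper restricts $\mu_n$ to balls $B(0,R_n)$ with a hard cutoff, whereas you multiply by the continuous cutoff $\chi_{R_n}$; both choices work for the same reason.
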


 \begin{proof}
 We will denote by $B(x,r)$ the ball of radius $r>0$ around the point $x\in\R^d$.
 If $\mu_n\vague \alpha$ then
 $$
 \lim_{n\to\infty}\mu_n\big(B(0,r)\big)=\alpha \big(B(0,r)\big),
 $$
 for all but at most countably values of $r$  and $\alpha(\R^d)$ can be recovered as the limit 
  $$
\alpha(\R^d)=\lim_{k\to\infty}\lim_{n\to\infty}\mu_n\big(B(0,r)\big).
  $$
Hence, given any $r>0$, there is $n_r\in \N$ such that for $n\ge n_r$
we have,
$$
\mu_n\big(B(0, r)]\le \alpha(\R^d)+\frac{1}{r}.
$$
Without loss of generality we can assume that $n_r$ is nondecreasing  with $r$. If we define 
$$
R_n=\sup \{r>0: n_r\le n\},
$$ 
then $R_n\to\infty$ and  
$$\mu_n\big(B(0,R_n)\big)\le  \alpha(\R^d)+\frac{1}{R_n}.
$$
 If we take $\alpha_n$  and $\beta_n$ as the restrictions of $\mu_n$ to $B(0,R_n)$ and $B(0,R_n)^c$ respectively, 
$\alpha_n\vague\alpha$ and $\alpha_n(\R^d)\to \alpha(\R^d)$. Therefore, by Lemma \ref{L1.0} part (ii), $\alpha_n\weak\alpha$. Furthermore, for any given $r>0$, eventually  $\beta_n(B(0,r))=0$
and hence $\beta_n\vague 0$.  
  \end{proof}
\medskip
\subsection{The space $\mathcal F$ of test functions.}

For our desired compactification, we need to develop a suitable topology on the quotient space $\widetilde\Mcal_1$ via convergence of test integrals. For this, we first need to characterize a suitable class of continuous functions
(or rather, functionals) on $\widetilde \Mcal_1$.

We fix a positive integer $k\ge 2$. Let ${\mathcal F}_k$ be the space of continuous functions $f: (\R^d)^k \longrightarrow \R$ that are {\it{translation invariant}}, i.e., 
$$
f(x_1+y,\dots,x_k+y)= f(x_1,\dots,x_k) \qquad \forall y,\,\,x_1,\dots, x_k\in \R^d.
$$
and  {\it{vanish at infinity}}, in the sense, 
$$
\lim_{\max_{i\ne j} |x_i-x_j|\to\infty} f(x_1,\dots,x_k) =0,
$$

In other words, $f(x_1,\dots,x_k)$ depends only on the differences $\{x_i-x_j\}_{i,j}$. 
A typical example of a function $f\in \mathcal F_2$ could be $f(x_1,x_2)= V(x_1- x_2)$ where $V(\cdot)$ is a continuous
function such that $V(x)\to 0$ as $|x|\to\infty$. Note that, each $f\in \mathcal F_k$ can interpreted as a continuous function of $k-1$ variables vanishing at infinity.
Hence, for each $k\geq 2$, $\mathcal F_k$  is a separable space under the uniform metric. Hence, if we denote by 
$$
\mathcal F= \cup_{k\geq 2} \mathcal F_k,
$$
then we can choose a countable dense subset for each $\mathcal F_k$ and ordering all of them as a single countable sequence $\{f_r(x_1,\dots,x_{k_r})\colon r\in \N\}$  we obtain a countable  dense subset of  $\mathcal F$. 

For any $\mu\in \Mcal_1$ and $f\in \mathcal F$, we define the  function
$$
\Lambda (f,\mu)=\int_{(\R^d)^k }f(x_1,\ldots, x_k)\mu(\d x_1)\cdots\mu(\d x_k).
$$
Note that, because of translation invariance of $f$, $\Lambda(f, \mu)$ depends only on the orbit $\widetilde\mu\in \widetilde \Mcal_1$ for any fixed $f\in\mathcal F$. 
As it will turn out, these are natural continuous functions to consider on $\widetilde{\mathcal M}_1$. Given any sequence
$(\mu_n)_n$ in $\Mcal_1$, because there is  a countable dense set $\{f_r\}$, by diagonalization one can choose a subsequence such that along the subsequence (denoted again by $\mu_n$), the limit
$$
\lambda (f)=\lim_{n\to\infty}\Lambda (f,\mu_n),\label{eq1.0}
$$
exists. To compactify the space $\widetilde\Mcal_1$ we will determine what the set of  possible limits  are, see Section \ref{compactification}. 

\subsection{Total disintegration of a sequence of measures.}
We say that a sequence $(\mu_n)$ in $\Mcal_{\leq 1}$ {\it{totally disintegrates}} if for any positive $r<\infty$,
\begin{equation*}\
\lim_{n\to\infty}\sup_{x\in\R^d} \mu_n \big(B (x,r)\big)=0.
\end{equation*} 
A typical example of a totally disintegrating sequence $\mu_n$ of measures is a centered Gaussian with covariance matrix $n\, \mathbf{Id}$.

The following facts determine equivalent criteria for total disintegration of a sequence of measures and it is useful to collect them.
\begin{lemma}\label{lemma1.1} 
Let $(\mu_n)_n$ be a sequence in $\Mcal_{\leq 1}$. 
The following facts are equivalent.

\medskip\noindent  
{\bf a}.  There exists a continuous function $V(x)>0$ on $\R^d$, with $\lim_{|x|\to\infty} V(x)=0$, such that
\begin{equation}\label{eq1.1a}
\lim_{n\to\infty}\int \int_{\R^{2d}} V(x-y) \mu_n(\d x)\mu_n(\d y)=0.
\end{equation}

\medskip\noindent
{\bf b}.
\begin{equation}\label{eq1.2}
\lim_{n\to\infty}\sup_{x\in\R^d} \mu_n \big(B (x,r)\big)=0.
\end{equation}

\medskip\noindent
{\bf c}.  For any continuous function $V(x)$ with $\lim_{|x|\to\infty} V(x)=0$,
\begin{equation}\label{eq1.3}
\lim_{n\to\infty} \sup_{x\in \R^d}\int V(x-y) \mu_n(\d y)=0.
\end{equation}
\end{lemma}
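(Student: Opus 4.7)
The plan is to prove the equivalences cyclically as (c) $\Rightarrow$ (a) $\Rightarrow$ (b) $\Rightarrow$ (c). The direction (c) $\Rightarrow$ (a) is essentially free: picking any continuous $V>0$ vanishing at infinity (say $V(x)=(1+|x|)^{-1}$) and estimating the double integral by
$$
\int\!\!\int V(x-y)\,\mu_n(\d x)\mu_n(\d y) \;\le\; \Bigl(\sup_{x\in\R^d}\int V(x-y)\,\mu_n(\d y)\Bigr)\cdot \mu_n(\R^d),
$$
together with $\mu_n(\R^d)\le 1$, forces (a) to hold whenever (c) does.

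For (a) $\Rightarrow$ (b), I would argue by contradiction. Suppose (b) fails, so there exist $r_0>0$, $\eps_0>0$, a subsequence (still called $\mu_n$), and points $x_n\in\R^d$ with $\mu_n(B(x_n,r_0))\ge\eps_0$. If $V$ is the function from (a), then $V$ is strictly positive and continuous, so $\delta:=\inf_{|z|\le 2r_0}V(z)>0$. Whenever $x,y\in B(x_n,r_0)$ we have $|x-y|\le 2r_0$, hence $V(x-y)\ge\delta$, giving
$$
\int\!\!\int V(x-y)\,\mu_n(\d x)\mu_n(\d y)\;\ge\;\delta\,\mu_n(B(x_n,r_0))^2\;\ge\;\delta\eps_0^2,
$$
which contradicts (a). This is the one step where the strict positivity of $V$ in hypothesis (a) is genuinely used.

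Finally, for (b) $\Rightarrow$ (c), let $V$ be any continuous function vanishing at infinity, so in particular bounded, and replace it by $|V|$ (also continuous and vanishing at infinity) if necessary. Given $\eps>0$, choose $R>0$ so that $|V(y)|<\eps/2$ whenever $|y|>R$. Splitting the inner integral according to whether $|x-y|\le R$ or not,
$$
\int |V(x-y)|\,\mu_n(\d y)\;\le\;\|V\|_\infty\,\mu_n(B(x,R))+\tfrac{\eps}{2}\,\mu_n(\R^d)\;\le\;\|V\|_\infty\sup_{x\in\R^d}\mu_n(B(x,R))+\tfrac{\eps}{2}.
$$
Taking $\sup_{x}$ on the left and then $\limsup_{n}$ on the right and applying (b) with $r=R$ yields $\limsup_n \sup_x \int|V(x-y)|\mu_n(\d y)\le\eps/2$, and since $\eps>0$ is arbitrary the limit is zero, proving (c).

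No step of this argument is subtle; the only point requiring attention is to keep the strict positivity of $V$ in (a) visible (so that $\delta>0$ in the (a)$\Rightarrow$(b) step) and to replace $V$ by $|V|$ in (b)$\Rightarrow$(c) since $V$ in (c) is not assumed nonnegative.
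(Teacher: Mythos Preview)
Your proof is correct and follows essentially the same route as the paper: the implications $(a)\Rightarrow(b)$ and $(b)\Rightarrow(c)$ are argued identically (the paper phrases $(a)\Rightarrow(b)$ as a direct inequality rather than by contradiction, but the content is the same), and your $(c)\Rightarrow(a)$ is the paper's $(c)\Rightarrow(d)\Rightarrow(a)$ collapsed into one step since the intermediate statement $(d)$ is not part of the version you were given.
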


\medskip\noindent
{\bf d}. For any continuous function $V(x)$ with $\lim_{|x|\to\infty} V(x)=0$,
\begin{equation}\label{eq1.4}
\lim_{n\to\infty}\int\int_{\R^{2d}} V(x-y) \mu_n(\d x)\mu_n(\d y)=0.
\end{equation}

Let the sequence  $\mu_n$  of measures satisfy any of the above.  Then for any $k\ge 2$ and $f\in{\mathcal F}_k$
\begin{equation}\label{eq1.4a} 
\lim_{n\to\infty}\int\dots\int_{\R^{dk}} f(x_1,\ldots,x_k) \mu_n(\d x_1)\cdots \mu_n(\d x_k)=0.
\end{equation}
\begin{proof}

\smallskip\noindent
$a)\implies b)$. Let $r>0$ be given. Since $V(x)>0$  and continuous, there exists $\delta>0$ such that $V(x)\ge \delta$ on $B(0,2r)$. Then,
$$   
\int \int_{\R^{2d}} V(x-y)\mu_n(\d x)\mu_n(\d y) \ge  \delta \int_{|x-y|\le 2r}\mu_n(\d x)\mu_n(\d y)\ge \delta \sup_{x\in \R^d} \big\{\mu_n\big(B(x,r)\big)\big\}^2.
$$

\smallskip\noindent 
$b)\implies c)$.
Let   $\eps_M=\sup_{|x|\ge M}|V(x)|$. Then $\lim_{M\to\infty}\eps_M=0$ and
\begin{align*}
\sup_{x\in \R^d}\int |V(x-y)|\mu_n(\d y)&\le \sup_{x\in \R^d} \int_{B(x,M)} |V(x-y)|\mu_n(\d y)+\sup_{x\in \R^d}\int_{B(x,M)^c} |V(x-y)|\mu_n(\d y)\\
&\le \|V\|_\infty \sup_{x\in \R^d} \mu_n [B(x,M)]+\eps_M.
\end{align*}
Therefore
$$
\limsup_{n\to\infty}\sup_{x\in \R^d}\int |V(x-y)|\mu_n(\d y)\le \eps_M,
$$
for any $M$. Since $\eps_M\to 0$ if we let $M\to\infty$, we get the claim.

\smallskip\noindent
$c)\implies d)$. Observe that, since $\mu_n(\R^d)\le 1$,
$$ 
\int\int_{\R^{2d}} V(x-y)\mu_n(\d x)\mu_n(\d y) \le \sup_{x\in \R^d}\int V(x-y) \mu_n(\d y).
$$

\smallskip\noindent
$d)\implies a)$. This is obvious.

\medskip\noindent
For the last part, for  $k>2$  we define 
$$ W(x_1, x_2)=\sup_{x_3,\ldots,x_k} |f(x_1,\ldots, x_k)|.
$$
Note that $W\in{\mathcal F}_2$ and so it is of the form $V(x_1-x_2)$. Since
$$
\begin{aligned}
\int\dots\int_{\R^{dk}} |f(x_1,\ldots,x_k)|\mu_n(\d x_1)\cdots \mu_n(\d x_k)&\le \int\int_{\R^{2d}} W(x_1,x_2) \mu_n(\d x_1)\mu_n(\d x_2)\\
&=\int\int_{\R^{2d}} V(x-y)\mu_n(\d x)\mu_n(\d y),
\end{aligned}
$$
the lemma is proved.
\end{proof}

\subsection{Widely separated sequences of measures.} 

We now need a working definition of two {\it{widely separated}} sequence of measures.
We say that two sequences $(\alpha_n)_n$ and $(\beta_n)_n$ in $\Mcal_{\leq 1}$ 
are widely separated, if for some strictly positive function $V$ on $\R^d$ which is continuous and vanishes at infinity,
\begin{equation}\label{eq1.6}
\lim_{n\to\infty}\int V(x-y) \alpha_n(\d x)\beta_n(\d y)=0.
\end{equation}
Note that if a sequence $(\mu_n)_n$ in $\Mcal_{\leq 1}$ satisfies \eqref{eq1.1a}, then, because of \eqref{eq1.3}, it is widely separated from any arbitrary sequence of measures in $\Mcal_{\leq 1}$.
\begin{lemma}\label{lemma2.3}
Let $(\alpha_n)_n$ and $(\beta_n)_n$ be two widely separated sequences in $\Mcal_{\leq 1}$.
Then,
 \begin{enumerate}
 \item For any continuous function $W$ in $\R^d$ vanishing at infinity
$$
\lim_{n\to\infty}\int W(x-y) \alpha_n(\d x)\beta_n(\d y)=0.
$$
\item For every $k\ge 2$ and $f\in {\mathcal F}_k$,
\begin{equation}\label{widesep}
\begin{aligned}
\lim_{n\to\infty}\bigg| \int f(x_1,\ldots,x_k)&\prod_{i=1}^k [\alpha_n+\beta_n](\d x_i)-\int f(x_1,\ldots,x_k)\prod_{i=1}^k \alpha_n(\d x_i)\\
&-\int f(x_1,\ldots,x_k)\prod_{i=1}^k \beta_n(\d x_i)\bigg|=0.
\end{aligned}
\end{equation}
\end{enumerate}
\end{lemma}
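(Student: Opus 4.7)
The plan is to prove (i) first and to use it as the main tool for (ii). For (i), the strategy is to promote the wide-separation property from the specific strictly positive $V$ to a general $W \in C_0(\R^d)$ by a pointwise domination. Given $\eps > 0$, choose $M$ such that $|W(x)| \le \eps$ for $|x| \ge M$; on the compact ball $\overline{B(0,M)}$, the strictly positive continuous function $V$ attains a minimum $\delta_M > 0$, so
$$|W(x)| \le \frac{\|W\|_\infty}{\delta_M}\, V(x) + \eps \qquad \text{for all } x \in \R^d.$$
Integrating against $\alpha_n(\d x)\beta_n(\d y)$ at argument $x-y$, and using that both marginal masses are at most $1$, the first term tends to $0$ by the hypothesis \eqref{eq1.6}, yielding $\limsup_n \int |W(x-y)|\,\alpha_n(\d x)\beta_n(\d y) \le \eps$; then $\eps\downarrow 0$.

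For (ii), I would expand
$$\prod_{i=1}^k [\alpha_n+\beta_n](\d x_i) \;=\; \sum_{S \subseteq \{1,\dots,k\}} \prod_{i \in S} \alpha_n(\d x_i) \prod_{i \notin S} \beta_n(\d x_i)$$
into $2^k$ terms. The contributions from $S=\emptyset$ and $S=\{1,\dots,k\}$ are precisely the two ``pure'' integrals subtracted on the left-hand side of \eqref{widesep}, so the task reduces to showing that each of the $2^k-2$ ``mixed'' terms vanishes. For such a term, pick any $i \in S$ and $j \notin S$, imitate the envelope construction already used for \eqref{eq1.4a}, and set
$$\tilde W(z) \;=\; \sup\{|f(x_1,\dots,x_k)| : x_i - x_j = z\}.$$
Translation invariance of $f$ makes $\tilde W$ a function of $z$ alone; integrating out the remaining $k-2$ coordinates (each of total mass at most $1$) bounds the mixed term in absolute value by $\int \tilde W(x-y)\,\alpha_n(\d x)\beta_n(\d y)$, which tends to $0$ by part (i) once $\tilde W$ is known to lie in $C_0(\R^d)$.

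The main obstacle, and essentially the only place where care is required, is verifying that $\tilde W \in C_0(\R^d)$. Parameterizing $f$ by $k-1$ difference coordinates identifies it with a function in $C_0((\R^d)^{k-1})$, hence uniformly continuous; taking the supremum over $k-2$ of those difference coordinates then preserves continuity (by uniform continuity of $f$), while $\tilde W(z) \to 0$ as $|z| \to \infty$ follows because fixing one pairwise difference $x_i - x_j = z$ with $|z|$ large forces $\max_{l \neq m} |x_l - x_m|$ to be large. This parallels the $\mathcal F_2$-to-$\mathcal F_k$ reduction carried out in Lemma~\ref{lemma1.1}, so the difficulty is really bookkeeping rather than a genuinely new obstruction.
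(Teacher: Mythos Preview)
Your proof is correct and follows essentially the same approach as the paper. For part (i) you make explicit the constant $C_\eps$ that the paper merely asserts exists, and for part (ii) you spell out the envelope reduction for general $k\ge 3$ (via $\tilde W(z)=\sup\{|f|:x_i-x_j=z\}\in C_0(\R^d)$) that the paper handles only for $k=2$ and then dismisses with ``the general case $k\geq 3$ follows easily''; your argument is the natural elaboration of that remark, exactly paralleling the $\mathcal F_k\to\mathcal F_2$ reduction already used in Lemma~\ref{lemma1.1}.
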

\begin{proof}
Let $W$ be any continuous function $\R^d$ vanishing at infinity. 
Since $(\alpha_n)_n$ and $(\beta_n)_n$ are widely separated, for some strictly positive $V$ which is continuous and vanishes at infinity,
$$
\lim_{n\to\infty}\int V(x-y) \alpha_n(\d x)\beta_n(\d y)=0.
$$
Furthermore, given any $\eps>0$, there is a constant $C_\eps>0$ such that
$$
|W(x)|\le C_\eps V(x)+\eps.
$$
Then
$$
\limsup_{n\to\infty}\int |W(x-y)|\alpha_n(\d x)\beta_n(\d y)\le C_\eps \limsup_{n\to\infty}\int V (x-y)\alpha_n(\d x)\beta_n(\d y)+\eps=\eps.
$$
This proves the first part (i). 

For the second part (ii), if we take $k=2$ and expand the product 
$$
\prod_{i=1}^2 (\alpha_n+\beta_n) (\d x_i),
$$
it is seen that all the cross terms are controlled by \eqref{eq1.6} and are negligible, by the first part (i), as $f(x_1, x_2)= W(x_1-x_2)$ for some continuous $W$ vanishing at infinity. 
The general case $k\geq 3$ follows easily. 
\end{proof}

\section{Compactification of $\widetilde \Mcal_1$: The space $\widetilde{\mathcal X}$} \label{compactification}

We turn to the central issue of $\Mcal_1$ failing to be compact in the weak topology. As mentioned before some typical reasons for this could be as follows: The location of the mass  can shift away to $\infty$ as in $\mu_n=\mu\ast\delta_{a_n}$ with $a_n\to\infty$, or it can split into two (or more)  pieces like in $\mu_n=\frac{1}{2}[\mu\ast\delta_{a_n}+\mu\ast\delta_{-a_n}]$, or it can also totally disintegrate into dust like a Gaussian with  a large variance. One imagines, in the limit, an empty, finite or countable collection $I$ 
of mass distributions $\{\alpha_i\colon \, i\in I\}$ that are widely separated with total mass $\sum_{i\in I} \alpha_i (\R^d)=p\le1$ and the remaining mass $1-p$ having totally disintegrated. 
Therefore, a natural ``compactification" could be a space $\widetilde{\mathcal X}$ of empty, finite or countable collections of orbits $\{{\widetilde \alpha}_i\colon \, i\in I\}$ of sub-probability distributions $\alpha_i$ having masses $p_i$ with $p=\sum_i p_i\le 1$. 

\subsection{The space $\widetilde{\mathcal X}$ and a metric $\mathbf D$.} 

We define
\begin{equation}\label{Xdef}
\widetilde{\mathcal X}= \bigg\{\xi\colon\,\xi=\{\widetilde{\alpha_i}\}_{i\in I}, \, \alpha_i\in \Mcal_{\leq 1}, \, \sum_{i\in I} \alpha_i(\R^d) \leq 1\bigg\}.
\end{equation}

We make some remarks about the above definition.
\begin{remark}\label{rmk1}
 First note that, in order to keep notation short, we suppressed the fact that the index set $I$ above ranges over empty, finite or countably many collections. 
 Furthermore, we will write any typical element $\xi\in \widetilde{\mathcal X}$ as $\xi=\{\alpha_i\}$ with the understanding that either the collection is empty or $i$ ranges over a finite
 or countable set.
\end{remark}
\begin{remark}
 Note that any element $\alpha \in \Mcal_{\leq 1}$ in the orbit $\widetilde\alpha$ has the same total mass $\alpha(\R^d)$. 
Hence, for any element $\xi=\{\widetilde\alpha_i\}\in\widetilde{\mathcal X}$, $p_i=\alpha_i(\R^d)$ will denote the total mass of any candidate $\alpha_i$ in the orbit $\widetilde\alpha_i$ and 
$\sum_{i} p_i=p \leq 1$.  If the collection is empty, then $p=0$ vacuously.
\end{remark}
\begin{remark}\label{rmk2}
Note that in any element $\xi=\{\widetilde\alpha_i\}$ of $\widetilde{\mathcal X}$, an orbit $\alpha_i$ could be repeated more than once. We call the number of occurrences of an orbit in an element $\xi$ its {\it{multiplicity}}.
\end{remark}

We now introduce a metric on $\widetilde{\mathcal X}$  that corresponds to the  convergence (\ref{eq1.0}).  Recall the class of functions 
$\mathcal F_k$ for $k\geq 2$ and $\mathcal F= \cup_{k\geq 2}\mathcal F_k$. 
We want a sequence $(\xi_n)_n$ to converge to $\xi$ in the space $\widetilde{\mathcal X}$ under the desired metric, if the sequence
$$
\Lambda(f,\xi_n)=\sum_{\widetilde{\alpha}_n\in\xi_n}\int f(x_1,\ldots,x_k)\alpha_n(\d x_1)\cdots \alpha_n(\d x_k)
$$
converges to the corresponding expression 
$$
\Lambda (f,\xi)=
\sum_{\widetilde\alpha\in\xi}\int f(x_1,\ldots,x_k)\alpha(\d x_1)\cdots \alpha(\d x_k)
$$
for every $f\in\mathcal F$. Recall that the value of $\int f(x_1,\ldots,x_k)\alpha(\d x_1)\cdots \alpha(\d x_k)$ 
depends only on the orbit $\widetilde\alpha$ since $f$ is translation invariant.
We also remark that if $\xi$ is empty then $\Lambda(f,\xi)=0$ for all $f \in \mathcal F$. 

For any $\xi_1,\xi_2\in {\widetilde{\mathcal X}}$, we define
\begin{equation}\label{Ddef}
\mathbf D(\xi_1,\xi_2)=\sum_{r=1}^\infty\frac{1}{2^r}\frac{1}{1+\|f_r\|_\infty} \bigg|\sum_{\widetilde\alpha\in\xi_1}\int f_r(x_1,\ldots,x_{k_r})\prod_{i=1}^{k_r}\alpha(\d x_i)-\sum_{\widetilde\alpha\in\xi_2}\int f_r(x_1,\ldots,x_{k_r})\prod_{i=1}^{k_r}\alpha(\d x_i)\bigg|,
\end{equation}
for a countable sequence  of functions $\{f_r(x_1,x_2,\ldots, x_{k_r})\}_{r\in \N}$ which is dense in $\mathcal F$. Here is our first main result.

\begin{theorem}
$\mathbf D$ is a metric on $\widetilde{\mathcal X}$. 
\end{theorem}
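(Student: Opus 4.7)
The axioms split naturally. The first three properties (non-negativity, finiteness, symmetry, triangle inequality) are straightforward: for any $f \in \mathcal{F}_k$ and $\xi \in \widetilde{\mathcal{X}}$ with masses $\{p_i\}$ satisfying $\sum_i p_i \leq 1$, we have
$$|\Lambda(f, \xi)| \leq \|f\|_\infty \sum_i p_i^k \leq \|f\|_\infty,$$
since $p_i^k \leq p_i$ for $k \geq 2$. Hence each $r$-th summand in $\mathbf{D}(\xi_1, \xi_2)$ is bounded by $2 \cdot 2^{-r}$, giving finiteness $\mathbf{D} \leq 2$. Symmetry is immediate from $|a-b| = |b-a|$ and the triangle inequality is verified termwise from that of $|\cdot|$.

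The content lies in proving the identity of indiscernibles, $\mathbf{D}(\xi_1, \xi_2) = 0 \Rightarrow \xi_1 = \xi_2$. Vanishing first gives $\Lambda(f_r, \xi_1) = \Lambda(f_r, \xi_2)$ for every $r$; using the Lipschitz bound
$$|\Lambda(f, \xi) - \Lambda(g, \xi)| \leq \|f - g\|_\infty \quad \text{for } f, g \in \mathcal{F}_k,$$
together with the fact that $\{f_r\}_r$ contains a dense subset of each $\mathcal{F}_k$ (by the construction in Section~\ref{preliminaries}), this promotes to $\Lambda(f, \xi_1) = \Lambda(f, \xi_2)$ for every $f \in \mathcal{F}$.

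To finish, I must show that the family $\{\Lambda(f, \cdot) : f \in \mathcal{F}\}$ separates points of $\widetilde{\mathcal{X}}$. My plan proceeds in two steps. First, for a single orbit $\widetilde\alpha \in \widetilde{\mathcal{M}}_{\leq 1}$, take tensor-type test functions $f(x_1, \ldots, x_k) = \prod_{j=2}^{k} \phi_j(x_j - x_1)$ with $\phi_j \in C_c(\R^d)$; these lie in $\mathcal{F}_k$ and produce $\int \prod_{j=2}^k (\phi_j \ast \check{\alpha})(x_1)\, \alpha(\d x_1)$, encoding all translation-invariant joint correlations of $\alpha$ anchored at $x_1$. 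Varying $k$ and the $\phi_j$'s and applying a Stone--Weierstrass approximation recovers $\widetilde\alpha$. Second, to separate two collections $\xi_1 = \{\widetilde\alpha_i\}$ and $\xi_2 = \{\widetilde\beta_j\}$, I would order the orbits in each by decreasing mass $p_i$ and peel them off inductively: choose widely separated representatives within each collection so that Lemma~\ref{lemma2.3} cleanly decomposes $\Lambda(f, \xi)$ into the sum $\sum_i \int f \, d\alpha_i^{\otimes k}$, then use test functions tailored to the shape of the largest-mass orbit to identify it in both $\xi_1$ and $\xi_2$ before subtracting its contribution and iterating.

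The main obstacle is this last separation step when several orbits share the same mass, or when the collections are countably infinite: a careful inductive peeling (or, equivalently, viewing $\xi$ as a discrete measure on the Polish space of orbits and invoking a separation property there) is required to handle multiplicities and possible accumulation rigorously. Everything else is bookkeeping around the density of $\{f_r\}$ and the elementary estimates on $\Lambda$.
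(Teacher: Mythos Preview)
Your handling of the easy axioms and the promotion from $\{f_r\}$ to all of $\mathcal F$ is fine, and your Step 1 (recovering a single orbit $\widetilde\alpha$ from the values $\int f\,\d\alpha^{\otimes k}$) is in the right spirit; the paper does this via characteristic functions rather than Stone--Weierstrass, but either route is plausible.

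The genuine gap is in your Step 2. You are given only the aggregate sums
\[
\Lambda(f,\xi_1)=\sum_i \int f\,\d\alpha_i^{\otimes k}
\quad=\quad
\sum_j \int f\,\d\beta_j^{\otimes k}=\Lambda(f,\xi_2),
\]
and you must extract the individual summands. Your invocation of Lemma~\ref{lemma2.3} is misplaced: that lemma concerns widely separated \emph{sequences} and would be used to show that a single $\mu_n$ decomposes into pieces whose contributions to $\Lambda$ add up, but here $\Lambda(f,\xi)$ is \emph{defined} as a sum over orbits, so there is nothing to decompose. More importantly, ``test functions tailored to the shape of the largest-mass orbit'' presupposes you already know which orbit that is; from the aggregate $\Lambda(f,\xi)$ alone you have no a priori access to any individual $\widetilde\alpha_i$, so the peeling cannot start.

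The paper supplies exactly the missing mechanism. Its Step~1 observes that for fixed $f\in\mathcal F_k$ and any $r\ge 1$, the quantity $\sum_{\widetilde\alpha\in\xi}\big(\int f\,\d\alpha^{\otimes k}\big)^r$ is itself of the form $\Lambda(g,\xi)$ for a suitable $g\in\mathcal F_{rk}$ (built by tensoring $r$ copies of $f$ and inserting a cutoff linking the blocks). Knowing all power sums $\sum_i a_i^r$ determines the multiset $\{a_i\}$, so for each fixed $f$ the list of values $\{\int f\,\d\alpha_i^{\otimes k}\}$ matches between $\xi_1$ and $\xi_2$. The remaining difficulty, that the bijection between orbits could a priori depend on $f$, is resolved in the paper's Step~2 by a Baire category argument: for a fixed $\widetilde\mu_1\in\xi_1$, the sets $C_k(\widetilde\mu_1,\widetilde\mu_2)=\{f\in\mathcal F_k:\Lambda(f,\widetilde\mu_1)=\Lambda(f,\widetilde\mu_2)\}$ cover $\mathcal F_k$ as $\widetilde\mu_2$ ranges over $\xi_2$, so one of them has interior and hence equals all of $\mathcal F_k$. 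This is the idea you are missing; without it the identification of collections, even in the finite case with distinct masses, does not go through.
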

\begin{proof}
Note that to prove $\mathbf D$ is a metric the only nontrivial part that we need to show is that, two collections $\xi_1$ and $\xi_2$ are
identical if $\mathbf D(\xi_1, \xi_2)=0$. For this it is enough to show if for every $k\geq 2$ and  every $f$ in ${\mathcal F}_k$,
\begin{equation}\label{claim0}
\sum_{\widetilde\alpha\in\xi_1}\int f(x_1,\ldots,x_{k})\prod_{i=1}^{k}\alpha(\d x_i)=\sum_{\widetilde\alpha\in\xi_2}\int f(x_1,\ldots,x_{k})\prod_{i=1}^{k}\alpha(\d x_i).
\end{equation}
then $\xi_1=\xi_2$. We prove this into three steps.

{\bf{Step 1:}} First we  show that, if \eqref{claim0} holds, then for every integer $r\ge 1$,
\begin{equation}\label{claim1}
\sum_{\widetilde\alpha\in\xi_1} \bigg\{\int f(x_1,\ldots,x_k)\prod_{i=1}^k \alpha(\d x_i)\bigg\}^r=\sum_{\widetilde\alpha\in\xi_2} \bigg\{\int f(x_1,\ldots,x_k)\prod_{i=1}^k \alpha(\d x_i)\bigg\}^r.
\end{equation}
This is certainly true for $r=1$. For $r=2$, we take a sequence $g_N$ of functions of $2k$ variables defined by
$$
g_N(x_1,x_2,\ldots, x_{2k})=f(x_1,x_2,\ldots, x_k)f(x_{k+1},x_{k+2},\ldots, x_{2k})\varphi(N^{-1}(x_1-x_{k+1})),
$$ 
 where  $0\le \varphi\le 1$ is equal to  $1$ inside  a ball of radius $1$ and is truncated smoothly to be $0$ outside a ball of radius $2$.  Letting  $N\to \infty$,  for any $\alpha\in \Mcal_{\leq 1}$, by the bounded convergence theorem,
$$
\int g_N(x_1,\ldots x_{2k}) \, \prod_{i=1}^{2k} \alpha(\d x_i)\to \bigg\{\int f(x_1,x_2\ldots,x_k) \,\prod_{i=1}^k \alpha(\d x_i)\bigg\}^2,
$$
and we obtain
\begin{equation*}
\sum_{\widetilde\alpha\in\xi_1} \bigg\{\int f(x_1,\ldots,x_k)\prod_{i=1}^k\alpha(\d x_i)\bigg\}^2=\sum_{\widetilde\alpha\in\xi_2} \bigg\{\int f(x_1,\ldots,x_k)\prod_{i=1}^k \alpha(x_i)\bigg\}^2.
\end{equation*}
The general case for any $r\in \N$ follows from a similar argument. 

{\bf{Step 2:}} We note that if \eqref{claim1} holds for every $r\in \N$, we can identify for each $\alpha\in \Mcal_{\leq 1}$ the values of 
\begin{equation}\label{list}
 \int f(x_1,\ldots,x_k)\prod_{i=1}^k\alpha(\d x_i),
\end{equation}
for $f\in \mathcal F_k$ and $k\ge 2$. It follows that if \eqref{claim0} holds for any two elements $\xi_1$ and $\xi_2$, then for every $f\in{\mathcal F}$, the list of values 
\eqref{list} for $\widetilde\alpha\in\xi_1$ is the same as the list from $\xi_2$. 


However, this is not enough. We need to show that if \eqref{claim0} holds for any two elements $\xi_1$ and $\xi_2$, then every $\widetilde{\alpha}\in \xi_1$ occurs in $\xi_2$ with the same multiplicity (see Remark \ref{rmk2} for the definition of multiplicity of an orbit).  

Let us denote by $S(f,\xi)$ the set of values of $\Lambda (f,{\widetilde \mu})$ as ${\widetilde \mu}$ varies over $\xi$.
We have matched for $\xi=\xi_1$ and $\xi_2$ the set of values $S(f,\xi_1)$ and $S(f,\xi_2)$. The next step is to show
that if  we pick an orbit ${\widetilde \mu}_1$ in $\xi_1$, the set of values of $\Lambda (f,{\widetilde \mu}_1)$ can actually be matched with the set of values  $\Lambda (f,{\widetilde \mu}_2)$ of some ${\widetilde \mu}_2\in\xi_2$ i.e. a single  choice ${\widetilde \mu}_2\in \xi_2$ can be made to work for all $f\in \cup_k {\mathcal F}_k$. In other words, if we define for
each ${\widetilde \mu}_1\in \xi_1$ and ${\widetilde \mu}_2\in \xi_2$
$$
    C_k\big({\widetilde\mu_1},{\widetilde\mu_2}\big)=\bigg\{f\in{\mathcal F}_k\colon\, \Lambda (f,{\widetilde \mu}_1)=\Lambda (f,{\widetilde \mu}_2)\bigg\},
$$
then we have for each ${\widetilde\mu_1}$ 
$$
{\mathcal F}_k=\cup_{{\widetilde\mu_2}\in\xi_2}\,\,C_k\big({\widetilde\mu_1},{\widetilde\mu_2}\big).
$$
Each $C_k\subset {\mathcal F}_k$ is a closed subset of a complete metric space and we have a countable union. By the Baire  category theorem  at least  one $C_k$ has an interior.  But if two linear functionals agree on an open set they agree everywhere. Therefore there exists $\mu_2\in \xi_2$ such that
\begin{equation}\label{xxx}
{\mathcal F}_k=C_k\big({\widetilde\mu_1},{\widetilde\mu_2}\big).
\end{equation}
The choice of ${\widetilde \mu}_2$ may still depend on $k$. We need to show that \eqref{xxx} holds for some ${\widetilde\mu_2}$ 
 for all $k$.  We note that any function $f(x_1,x_2,\ldots, x_{k-1})\in {\mathcal F}_{k-1}$ is a limit of 
 $$
 g(x_1,x_2,\ldots, x_{k-1},x_k)=f(x_1,x_2,\ldots, x_{k-1})\varphi(x_{k-1}-x_k)\in  {\mathcal F}_k
 $$ 
 as the continuous function $\varphi$ with compact support tends boundedly to $1$. Therefore if $\Lambda (f,{\widetilde \mu_1})=\Lambda (f,{\widetilde \mu_2})$ on  ${\mathcal F}_k$, they agree on ${\mathcal F}_{k-1}$ as well. In particular if $\Lambda (f,{\widetilde \mu_1})=\Lambda (f,{\widetilde \mu_2})$ on ${\mathcal F}_k$ for infinitely many values of $k$, then they agree for all values of $k$. We note that by allowing $\varphi$ to tend to $1$,
 if $\Lambda (f,{\widetilde \mu_1})=\Lambda (f,{\widetilde \mu_2})$ on ${\mathcal F}_2$, then ${ \mu}_1 $ and ${\mu}_2$ have the same mass. Assuming the mass to be positive, there can only be a finite number of possibilities for $\mu_2$ since the total sum is at most $1$. There is then a $\mu_2$ that works for an infinite number of values of $k$ and consequently for all $k$. We can then peel off matching pairs  and proceed with what is left.  If we are careful to remove at each stage measures with the largest masses from $\xi_1$ and $\xi_2$, we will exhaust both $\xi_1$ and $\xi_2$ (it may take a countable number of steps). 

\medskip

{\bf{Step 3:}} Now we have to recover the orbit of $\mu\in \mathcal M_{\le 1}(\R^d)$ from the value 
$$\int f(x_1,\ldots,x_k)\prod_{i=1}^k\mu(\d x_i)
$$ 
for $f\in \mathcal F_k$. We can let $f$ converge boundedly  to $\exp\{\sum_{i=1}^k \sqrt{-1}\langle t_i ,x_i\rangle\}$ provided $\sum_i t_i=0$. In other words we can determine for the characteristic functions $\{\phi(t)e^{\sqrt{-1}\langle t,a\rangle}\}$ of $\widetilde{\alpha}\in {\mathcal X}$, the value of   $\prod_{i=1}^k \phi(t_i)$ for all $\{t_i\}$ with $\sum_i t_i=0$. 

The following calculation  will complete the proof. Let $\phi(\cdot)$ and $\psi(\cdot)$ be two characteristic functions such that  $\prod_{i=1}^k \phi(t_i)= \prod_{i=1}^k \psi(t_i)$ for all $\{t_i\}$ with $\sum_i t_i=0$. In particular $|\phi(t)|^2=\phi(t)\phi(-t)=\psi(t)\psi(-t)=|\psi(t)|^2$. 
Let $G=\{t: |\phi(t)|=|\psi(t)|\not=0\}$. Write $\phi(t)=\psi(t)\chi(t)$ on $G$. $G$ is  a symmetric open set containing   $0$. For any $k$ and  $t_1,\ldots,t_k\in G$ such that $\sum_{i=1}^k t_i =\tau\in G$, we have

$$\prod_{i=1}^k\chi(t_i)\chi(-\tau)=1$$
Noting that $\chi(\tau)=\overline {\chi(-\tau )}$, we find that $\chi(t_1+t_2+\cdots+t_k)=\prod_{i=1}^k \chi(t_i)$ provided, $\{t_i\}$ as well as $t_1+\cdots+t_k$ are all in $G$ which contains a neighborhood of $0$. It is now standard to show that  for some $a\in \R^d$, $\chi(t)=e^{\sqrt{-1}\,\langle a,t\rangle}$ near $0$ and since $\chi(kt)=(\chi(t))^k$  the proof is complete.
\end{proof}

\subsection{Completion under the metric $\mathbf D$ and the compactification.}

Henceforth, the metric $\mathbf D$ will define the topology on the space $\widetilde {\mathcal X}$. Recall that the space of orbits $\widetilde{\mathcal M}_1$ is canonically embedded in $\widetilde{\mathcal X}$.

\begin{theorem}\label{T2.2}
The set of orbits $\widetilde{\mathcal M}_1(\R^d)$  is dense in $\widetilde{\mathcal X}$. Furthermore, given any sequence $(\widetilde\mu_n)_n$ in $\widetilde{\mathcal M}_1(\R^d)$, there is a subsequence that converges to a limit in $\widetilde{\mathcal X}$. Hence $\widetilde{\mathcal X}$ is a compactification of $\widetilde{\mathcal M}_1(\R^d)$. It is then also the completion under the metric $\mathbf D$ of the totally bounded space $\widetilde{\mathcal M}_1(\R^d)$.
\end{theorem}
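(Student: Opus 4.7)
The plan is to establish three facts: every sequence $(\widetilde\mu_n)_n$ in $\widetilde{\mathcal M}_1$ has a subsequence converging in $(\widetilde{\mathcal X},\mathbf D)$ (relative compactness), $\widetilde{\mathcal M}_1$ is dense in $\widetilde{\mathcal X}$, and $(\widetilde{\mathcal X},\mathbf D)$ is complete. Completeness is a byproduct of compactness, since a Cauchy sequence with a convergent subsequence converges fully. Together these yield both the compactification and the completion statements; total boundedness of $\widetilde{\mathcal M}_1$ is then automatic from density in a compact metric space.

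For the extraction, I would implement the recursive peeling sketched in the introduction. Given $\mu_n\in\Mcal_1$, diagonalize over rationals so that the concentration function $q_n(r):=\sup_x\mu_n(B(x,r))$ converges to a limit $q(r)$, and set $p_1=\lim_{r\to\infty}q(r)\in[0,1]$. If $p_1=0$, Lemma \ref{lemma1.1} yields total disintegration, so $\Lambda(f,\mu_n)\to 0$ for every $f\in\mathcal F$ and $\widetilde\mu_n\to\emptyset$ in $\mathbf D$. Otherwise choose shifts $a_n^{(1)}\in\R^d$ (approximately) attaining the concentration, set $\lambda_n^{(1)}=\mu_n\star\delta_{-a_n^{(1)}}$, and use the vague compactness of $\Mcal_{\le 1}$ (Lemma \ref{L1.0}(iv)) to extract $\lambda_n^{(1)}\vague\alpha_1$ with $\alpha_1(\R^d)=p_1$. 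Lemma \ref{L1.1} splits $\lambda_n^{(1)}=\alpha_n^{(1)}+\beta_n^{(1)}$ with $\alpha_n^{(1)}\weak\alpha_1$ supported in $B(0,R_n)$ and $\beta_n^{(1)}$ in its complement, making the two sequences widely separated as $R_n\to\infty$. Iterating on $\beta_n^{(1)}$ produces $(\alpha_j,p_j)_{j\ge 1}$ with $\sum_j p_j\le 1$, and a diagonalization realizes all the limits along one subsequence. To check convergence to $\xi=\{\widetilde\alpha_j\}$ in $\mathbf D$, fix $f_r\in\mathcal F_{k_r}$. After $N$ peeling steps, iterated use of Lemma \ref{lemma2.3}(ii) across widely separated blocks gives
\begin{equation*}
\Lambda(f_r,\mu_n)=\sum_{j\le N}\Lambda(f_r,\widetilde\alpha_n^{(j)})+\Lambda(f_r,\widetilde\gamma_n^{(N)})+o_n(1),
\end{equation*}
where the residual $\gamma_n^{(N)}$ has mass at most $1-\sum_{j\le N}p_j+o_n(1)$ and an asymptotically disintegrating component. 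The first sum tends to $\sum_{j\le N}\Lambda(f_r,\widetilde\alpha_j)$ by the weak convergence of the $\alpha_n^{(j)}$, while the residual term is controlled by $\|f_r\|_\infty(1-\sum_{j\le N}p_j)^{k_r}+o_n(1)$ via Lemma \ref{lemma1.1}. Letting $n\to\infty$ and then $N\to\infty$ inside the weighted series defining $\mathbf D$ yields $\mathbf D(\widetilde\mu_n,\xi)\to 0$.

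For density, given $\xi=\{\widetilde\alpha_i\}_{i\in I}$ with $p=\sum_i p_i\le 1$, fix representatives $\alpha_i$, a Gaussian $\gamma$ of mass $1-p$ on $\R^d$, and its dilation $\gamma_n$ with covariance $n^2\mathbf{Id}$ (which totally disintegrates by Lemma \ref{lemma1.1}). For centers $\{a_i^{(n)}\}_{i=0}^{N}\subset\R^d$ with pairwise distances diverging, the probability measure
\begin{equation*}
\mu_n^{(N)}:=\sum_{i=1}^N \alpha_i\star\delta_{a_i^{(n)}}+\Bigl(\gamma_n+\sum_{i>N}\alpha_i\Bigr)\star\delta_{a_0^{(n)}}
\end{equation*}
satisfies $\Lambda(f,\mu_n^{(N)})\to\sum_{i\le N}\Lambda(f,\widetilde\alpha_i)$ for every $f\in\mathcal F$ by Lemmas \ref{lemma2.3}(ii) and \ref{lemma1.1}, so $\widetilde\mu_n^{(N)}\to\xi_N:=\{\widetilde\alpha_i\}_{i\le N}$ in $\mathbf D$. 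Since $\mathbf D(\xi_N,\xi)\le\sum_r 2^{-r}\sum_{i>N}p_i^{k_r}\to 0$, a diagonal extraction yields probability measures whose orbits approach $\xi$.

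The main obstacle is the bookkeeping in the peeling procedure: one must choose the successive radii $R_n^{(j)}$ and subsequences by diagonalization so that simultaneously (a) all vague limits $\alpha_j$ exist, (b) every pair of extracted blocks $\alpha_n^{(j)},\alpha_n^{(j')}$ stays widely separated as $n\to\infty$, and (c) the residual mass can be controlled uniformly in $n$ and $N$. All three points rest on Lemmas \ref{L1.1}, \ref{lemma1.1} and \ref{lemma2.3}, but their simultaneous control requires careful coordination.
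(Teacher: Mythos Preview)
Your strategy matches the paper's exactly---concentration-function peeling for the subsequence extraction, and widely separated translates plus a dissipating Gaussian for density---so the architecture is fine. There is, however, a real gap in the control of the residual term. Your stated bound
\[
|\Lambda(f_r,\gamma_n^{(N)})|\le \|f_r\|_\infty\Bigl(1-\sum_{j\le N}p_j\Bigr)^{k_r}+o_n(1)
\]
is just the mass estimate, and it does \emph{not} vanish as $N\to\infty$ unless $\sum_j p_j=1$. But whenever part of $\mu_n$ genuinely disintegrates (say $\mu_n=\frac12\delta_0+\frac12\mathcal N(0,n)$), one has $\sum_j p_j<1$ and your bound stalls at a positive number. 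The paper closes the argument differently: since the extracted piece at stage $j$ has mass $\alpha_j(\R^d)\ge q_j/2$ and $\sum_j\alpha_j(\R^d)\le 1$, the concentrations satisfy $q_j\to 0$; then the Lemma~\ref{lemma1.1} mechanism (bounding $\int V(x-y)\beta(\d x)\beta(\d y)$ by $\|V\|_\infty q_\beta(M)+\eps_M$) gives $\limsup_n|\Lambda(f_r,\gamma_n^{(N)})|\to 0$ as $N\to\infty$. You need to replace the mass bound by this concentration argument.

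Two smaller points. First, the equality $\alpha_1(\R^d)=p_1$ is not automatic from a vague limit; the paper only secures $\alpha(\R^d)\ge q/2$, which is all one needs for $q_j\to 0$. Second, in your density construction the lump $(\gamma_n+\sum_{i>N}\alpha_i)\star\delta_{a_0^{(n)}}$ does not have vanishing $\Lambda$-value: the $\sum_{i>N}\alpha_i$ part contributes $\Lambda(f,\sum_{i>N}\alpha_i)$ in the limit, so $\widetilde\mu_n^{(N)}$ converges not to $\xi_N=\{\widetilde\alpha_i\}_{i\le N}$ but to $\{\widetilde\alpha_1,\dots,\widetilde\alpha_N,\widetilde{\sum_{i>N}\alpha_i}\}$. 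The paper avoids this by discarding the tail and replacing it with $(1-\sum_{j\le n}p_j)\lambda_M$; your version still works after one more estimate, but the claim as written is off.
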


\begin{proof}
We prove the theorem in two main steps.

\medskip\noindent
{\bf{Step 1:}} First we show that $\widetilde{\mathcal M_1}$ is dense in $\widetilde{\mathcal X}$. Given any $\xi=\big\{\widetilde\alpha_i\colon\, i\in I\}\in \widetilde {\mathcal X}$, we would like to have a sequence 
$(\widetilde\mu_n)_n$ in $\widetilde \Mcal_1$ which converges to $\xi\in \widetilde X$. This can be done if we take ``distant shifts" of $\mu_n$ weighted by corresponding masses $p_i$ of $\alpha_i$. Any remaining mass
$1-\sum_i p_i$ can be filled by a Gaussian with a large variance (leading to ``total disintegration" of mass $1-\sum_i p_i$). The convex combination of all these measures will approximate $\xi$ in $\widetilde {\mathcal X}$.

Indeed, let $\xi=\{\widetilde\alpha_i\colon i\in I\}\in \widetilde{\mathcal X}$ be given. If it is an infinite collection, then for every $\eps>0$ we can pick a finite sub-collection $\{\alpha_1,\dots,\alpha_n\}$ such that the remaining total masses $\sum_{j>n} \alpha_j(\R^d)$ add up to at most $\eps>0$. Since for any $\alpha\in \Mcal_{\leq 1}$ and $f\in \mathcal F_k$, 
$$
\int f(x_1,\ldots,x_k)\prod_{i=1}^k\alpha(\d x_i)\le \|f\|_\infty \, \big(\alpha(\R^d)\big)^k\le \|f\|_\infty \, \alpha(\R^d),
$$
we infer that 
\begin{equation}\label{largen}
\sum_{j>n} \int f(x_1,\dots,x_k) \prod_{i=1}^k \alpha_j (\d x_i) \leq \|f\|_\infty \sum_{j>n} \alpha_j(\R^d) \leq \eps \|f \|_\infty.
\end{equation}
Let us denote by $p_j= \alpha_j(\R^d)$ for $j=1,\dots, n$ and choose spatial points $a_1,\dots, a_n\in \R^d$ so that $\inf_{i\ne j} |a_i- a_j|\to \infty$. Also,
for any $M>0$, let $\lambda_M$ be a Gaussian in $\R^d$ with mean $\mathbf 0 \in \R^d$ and 
covariance matrix $M\,\mathrm{\bf{Id}}\in \R^{d\times d}$. Since the family of measures $\{\lambda_M\}_{M>0}$ totally disintegrates, by Lemma \ref{lemma1.1} and \eqref{eq1.4a}, for any $k\ge 2$ and $f\in {\mathcal F}_k$,
\begin{equation}\label{Gaussdisint}
\lim_{M\to\infty}\int f(x_1,\ldots,x_k) \prod_{i=1}^k \lambda_M(\d x_i)=0.
\end{equation}
Then for the convex combination 
\begin{equation}\label{convexcomb}
\mu_n^{\ssup{a_1,\dots,a_n, M}}= \mu_n:=  \sum_{j=1}^n \alpha_j\star  \delta_{a_j} + \bigg(1-\sum_{j=1}^n p_j\bigg) \lambda_M,
\end{equation}
we conclude that, for any $k\geq 2$ and $f\in \mathcal F_k$,
$$
 \int f(x_1,\ldots,x_k)\prod_{i=1}^k\mu_n (\d x_i)\longrightarrow \sum_{j=1}^n \int f(x_1,\ldots,x_k)\prod_{i=1}^k\alpha_j (\d x_i)
$$
as $\inf_{i\ne j} |a_i- a_j|\to \infty$ and $M\uparrow \infty$, by \eqref{widesep} and \eqref{Gaussdisint} (masses that are far away from each other do not interact and masses that are too thinly spread do not count). Therefore,
by \eqref{largen} and the definition of the metric $\mathbf D$ (recall \eqref{Ddef}), the sequence of orbits $(\widetilde\mu_n)_n$ converges to $\xi$ in $\widetilde{\mathcal X}$.

\medskip\noindent
{\bf{Step 2:}} We show that any sequence $(\widetilde{\mu}_n)_n$ in $\widetilde{\mathcal X}$
has a subsequence that converges to some $\xi\in {\widetilde{\mathcal X}}$. We need to collect some facts.

Let $\mu \in \Mcal_{\leq 1}(\R^d)$. The {\it{concentration function}} of $\mu$ is defined as,
\begin{equation}\label{eq2.4}
q_\mu(r)=\sup_{x\in \R^d} \mu\big(B(x,r)\big),
\end{equation}
for any $r>0$. Then $\lim_{r\to\infty}q_\mu(r)=\mu(\R^d)$.

If $(\mu_n)_n$ is a sequence in $\Mcal_{\leq 1}(\R^d)$, and $q_n(r)$ is the concentration function of $\mu_n$, we can, by choosing a subsequence (which we suppress in the notation) if needed, assume that for any $r>0$, 
$$
\lim_{n\to\infty} q_n(r)=q(r),
$$
exists and also
$$
\lim_{n\to\infty}\mu_n(\R^d)=p \in [0,1].
$$
If $q=\lim_{r\uparrow\infty}q(r)$, then always $q\le p$. 

If $q=0$, we have for every $r>0$
$$
\lim_{n\to \infty} \sup_{x\in \R^d} \mu_n \big(B(x,r)\big)=\lim_{n\to\infty} q_n(r)= 0
$$
and hence by Lemma \ref{lemma1.1}, \eqref{eq1.4a} and the definition of the metric $\mathbf D$ (recall \eqref{Ddef}), ${\widetilde\mu}_n\to 0$ in  $\widetilde\skrix$. 

If $q>0$, then taking a suitable translation $a_n\in \R^d$, we can assume that $\lambda_n=\mu_n\star\delta_{a_n}$ satisfies, for some $r>0$,
\begin{equation}\label{lambda}
\lambda_n \big(B(0,r)\big)\ge {q}/{2},
\end{equation}
for all sufficiently large $n$.  Let us assume, by choosing a subsequence if needed, $\lambda_n\vague \alpha$. Then $\alpha(\R^d)\ge \frac{q}{2}$. According to Lemma \ref{L1.1}, we can express
$\lambda_n=\alpha_n+\beta_n$ where $\beta_n\vague 0$ and $\alpha_n\weak\alpha$. Lemma \ref{lemma2.3} implies that for $V\in {\mathcal F}_2$
$$
\lim_{n\to\infty}\int V(x-y) \alpha_n(\d x)\beta_n(\d y)=0.
$$
This property is valid after translating back by $\delta_{-a_n}$ and $\mu_n$
has the same decomposition in terms of the shifted $\alpha_n\ast\delta_{-a_n}$  and $\beta_n\ast\delta_{-a_n}$.  We will denote them again by $\alpha_n$ and $\beta_n$.  
We remark that if $q=p$, then $\lambda_n=\mu_n\star \delta_{a_n}$ converges weakly to $\alpha$ and $\beta_n$ can be taken to be $0$. To see this, choose $r>0$ so that \eqref{lambda} holds. Furthermore, note that given any $\eps>0,$ there are translations $b_{n,\eps}$ such that, for some $r_\eps$,  $(\mu_n\star\delta_{b_n}) [B(0, r_\eps)]\ge p-\eps$ for large enough $n$.  The sets $B(-a_n,r)$ and $B(-b_n, r_\eps)$ can not be disjoint,  because if they were, there combined total mass would exceed $p$ (recall \eqref{lambda}). Therefore
$|a_n-b_n|\le r+r_\eps$. This implies $B(-a_n,r+2r_\eps)\supset B(-b_n,r_\eps)$ and $\lambda_n[B(0, r+2r_\eps)]\ge p-\eps$. This 
shows that $\lambda_n$ is a tight family of measures and (choosing a subsequence if needed) $\lambda_n\weak\alpha$ for some $\alpha\in \Mcal_{\leq 1}(\R^d)$ and $\beta_n$ can be taken as $0$. Hence, again by 
definition of the metric $\mathbf D$ (recall \eqref{Ddef}), ${\widetilde\mu}_n\to \widetilde\alpha$ in $\widetilde\skrix$.

Let us now start with a sequence $(\mu_n)_n$ in $\Mcal_1(\R^d)$. We want to prove that the sequence $(\widetilde{\mu}_n)_n$ in $\widetilde{\mathcal X}$
has a subsequence that converges to some $\xi\in {\widetilde{\mathcal X}}$. Hence, to begin with $p=1$ and $0\le q\le 1$. By the remarks made above, if $q=0$, then $\widetilde{\mu}_n\to 0$ in $\widetilde{\mathcal X}$. If $q=1$, then $\alpha_n=\mu_n$ and ${\widetilde \mu}_n\to {\widetilde \alpha}$ in ${\widetilde{\mathcal X}}$.

\medskip
If $0<q<1$, we can, for some sequence $(a_n)_n\subset\R^d$, represent $\mu_n=\alpha_n+\beta_n$. $\alpha_n$ so that
\begin{itemize}
\item 
$$
\alpha_n\ast\delta_{a_n}\Rightarrow \alpha.
$$ 
\item For every $V\in \mathcal F_2$,
$$
\lim_{n\to \infty} \int V(x_1-x_2)\alpha_n(\d x_1)\beta_n(\d x_2)=0.
$$
\item For every $r>0$,
$$
\lim_{n\to\infty}q_{\beta_n}(r)\le \min\big\{1-\frac{q}{2}, q\big\}.
$$
\end{itemize}
 The last inequality requires a remark. Since $\beta_n\le \mu_n$ we have 
 $q_{\beta_n}(r)\le q_{\mu_n}(r)$ for every $r$. Mass of $\frac{q}{2}$ has been removed in the limit from $\mu_n$ by \eqref{lambda}. What is left can  in the limit have mass at most $1-\frac{q}{2}$. 
 
 \medskip
 We repeat the procedure with $\beta_n$.  Either the process goes on forever or terminates at some finite stage. If it terminates at a finite stage we would have the decomposition
 \begin{equation}\label{mudecompose}
 \mu_n=\sum_{j=1}^k \alpha_{n}^{\ssup j}+\gamma_n \qquad k\in \N,
 \end{equation}
 that will satisfy 
 \begin{itemize}
 \item For $j=1,\dots,k$,
 $$
 \lim_{n\to\infty}\alpha_{n}^{\ssup j}\ast a_{n}^{\ssup j}\Rightarrow \alpha_j.
 $$
 \item For $ i\not=j$ and $V\in {\mathcal  F}_2$, 
 $$
\lim_{n\to\infty} \int V(x_1-x_2)\alpha_{n}^{\ssup i}(\d x_1)\alpha_{n}^{\ssup j}(\d x_2)=0.
 $$
 \item For every $r>0$, $q_{\gamma_n}(r)\to 0$  and
 $$
\lim_{n\to\infty} \int V(x_1-x_2)\gamma_n(\d x_1)\gamma_n(\d x_2)=0.
 $$
 \end{itemize}
Clearly ${\widetilde \mu}_n$ converges to $\xi=\{{\widetilde\alpha}_1,\ldots, {\widetilde\alpha}_k\}$ in ${\widetilde{\mathcal X}}$.

If the process continues forever, we have for each $k\in \N$ a decomposition as above. Inductively, starting from
$\beta_{n,0}=\mu_n$,  we define according to Lemma \ref{L1.1} $\beta_{n,j}=\alpha_{n,j+1}+\beta_{n,j+1}$ so that $\alpha_{n,j}\Rightarrow \alpha_j$. Let  $p_j=\lim_{n\to\infty}\beta_j(\R^d)$ and $q_j=\lim_{r\to\infty}\lim_{n\to\infty}q_{\beta_{n,j}}(r)$. Since $\alpha_j(\R^d)\ge \frac{q_j}{2}$, and $\sum_j\alpha_j(\R^d)\le 1$, it follows that $q_j\to 0$ as $j\to\infty$. Fix any $F\in {\mathcal F}_k$. Then,
proceeding inductively in $j$,  
 
 \begin{align*}
\int F(x_1,\ldots, x_k)\mu_n(\d x_1)\cdots\mu_n(\d x_k)&=\sum_{i=1}^j\int F(x_1,\ldots, x_k)\alpha_{n,i}(\d x_1)\cdots\alpha_{n,i}(\d x_k)\\
&\qquad+\int F(x_1,\ldots, x_k)\beta_{n,j}(\d x_1)\cdots\beta_{n,j}(\d x_k)
\end{align*}
Since $q_j\to 0$ and the orbits ${\widetilde \alpha_{n,j}}$ converge to
${\widetilde \alpha_{j}}$ in ${\widetilde {\mathcal X}}$, the theorem is proved.
\end{proof}
We end this section with an immediate corollary which will be of use later.

\begin{cor}\label{continuity}
Let $(\widetilde\mu_n)_n$ be a sequence in $\widetilde{\mathcal X}$ so that $\widetilde\mu_n\to \xi= \{\widetilde\alpha_j\} \in \widetilde{\mathcal X}$. Then, for any $V\in \mathcal F_2$,
$$
\lim_{n\to\infty} \int\int_{\R^d\times \R^d} V(x-y) \mu_n(\d x)\mu_n(\d y)= \sum_j \int\int_{\R^d\times \R^d} V(x-y) \alpha_j(\d x)\alpha_j(\d y).
$$
In other words, the functional 
$$
H(\widetilde\mu)=\int\int_{\R^d\times \R^d} V(x-y) \mu(\d x)\mu(\d y) \qquad\qquad\mu\in\Mcal_1(\R^d),
$$
is continuous on $ \widetilde{\mathcal X}$.
\end{cor}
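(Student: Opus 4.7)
The plan is to reduce the statement to the defining convergence of $\Lambda(f_r,\cdot)$ along the countable dense family $\{f_r\}\subset\mathcal F$ used to build the metric $\mathbf D$, by a standard uniform approximation argument. The first step is to record a uniform bound: for every $g\in\mathcal F_k$ (any $k\ge 2$) and every $\xi=\{\widetilde\alpha_j\}\in\widetilde{\mathcal X}$,
$$|\Lambda(g,\xi)|\le \|g\|_\infty\sum_j \big(\alpha_j(\R^d)\big)^k \le \|g\|_\infty \sum_j \alpha_j(\R^d)\le \|g\|_\infty,$$
where I used $\alpha_j(\R^d)\le 1$ and $k\ge 2$ so that $p_j^k\le p_j$, and the same bound holds trivially for $\widetilde\mu\in\widetilde\Mcal_1$. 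Consequently each $\Lambda(\cdot,\xi)$ is a linear functional on $\mathcal F_2$ of norm at most $1$ in the uniform topology, which is what will allow me to pass from the dense sequence to the general $V\in\mathcal F_2$.

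Given $V\in\mathcal F_2$ and $\varepsilon>0$, I would then choose $f_r$ from the dense countable sequence with $\|V-f_r\|_\infty<\varepsilon$ and split
$$|\Lambda(V,\widetilde\mu_n)-\Lambda(V,\xi)|\le |\Lambda(V-f_r,\widetilde\mu_n)|+|\Lambda(f_r,\widetilde\mu_n)-\Lambda(f_r,\xi)|+|\Lambda(f_r-V,\xi)|.$$
The first and third terms are each at most $\varepsilon$ by the uniform bound just derived. For the middle term, convergence $\widetilde\mu_n\to\xi$ in $\mathbf D$ forces every single term of the weighted series \eqref{Ddef} to vanish, so for this fixed $r$ we have $|\Lambda(f_r,\widetilde\mu_n)-\Lambda(f_r,\xi)|\to 0$. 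Taking $\limsup_n$ and then $\varepsilon\to 0$ yields the first assertion, namely the claimed limit for $V\in\mathcal F_2$.

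The assertion that $H(\widetilde\mu)=\Lambda(V,\widetilde\mu)$ extends continuously to all of $\widetilde{\mathcal X}$ follows by exactly the same three-term split applied to arbitrary $\xi,\xi'\in\widetilde{\mathcal X}$: $|\Lambda(V,\xi)-\Lambda(V,\xi')|\le 2\varepsilon + |\Lambda(f_r,\xi)-\Lambda(f_r,\xi')|$, combined with continuity of each $\Lambda(f_r,\cdot)$ in $\mathbf D$ by construction of the metric. The only point requiring any care is the uniform bound in the infinite-collection case, and that works precisely because $k\ge 2$ yields $p_j^k\le p_j$, making the series summable against the mass constraint $\sum_j p_j\le 1$. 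Beyond that, no real obstacle is expected.
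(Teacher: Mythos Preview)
Your proof is correct and is exactly the routine argument the paper has in mind: the paper gives no proof of this corollary, labeling it an ``immediate corollary'' of the construction of the metric $\mathbf D$, and your density-plus-uniform-bound argument is precisely what makes it immediate. The one small point worth making explicit is that the approximating $f_r$ must lie in $\mathcal F_2$ (so that $\|V-f_r\|_\infty$ is meaningful), which is ensured by the paper's construction of $\{f_r\}$ as the union of countable dense subsets chosen in each $\mathcal F_k$ separately.
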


\section {Large Deviation Principles in the compact space $\widetilde{\mathcal X}$} \label{sectionLDP}
Recall that we started with Wiener measure $\P$ on $\Omega=C[[0,\infty);\R^d]$ corresponding to the $d$-dimensional Brownian motion $W$ starting from the origin with
 $$
L_t(A)=\frac{1}{t}\int_0^t {\1}_A(W(s))\d s \qquad A\subset \R^d
$$
denoting its normalized occupation measure until time $t$. Note that $L_t$ maps
\begin{equation}\label{map}
\Omega\to {\Mcal}_{1}(\R^d)
\end{equation}
inducing a probability distribution on ${\Mcal}_{1}(\R^d)$. Classical large deviation principle (\cite{DV}) states that
the family of these distributions satisfies a ``weak" large deviation principle in the space probability measures on $\Mcal_1(\R^d)$ equipped with the weak topology with a rate function $I$. More precisely, for every compact subset $K\subset\Mcal_1(\R^d)$,
\begin{equation}\label{DVub}
\limsup_{t\to\infty} \frac 1t \log \P(L_t\in K) \leq -\inf_{\mu\in K} I(\mu)
\end{equation}
and for every open subset $G\subset\Mcal_1(\R^d)$
\begin{equation}\label{DVlb}
\liminf_{t\to\infty} \frac 1t \log \P(L_t\in G) \geq -\inf_{\mu\in G} I(\mu),
\end{equation}
where 
$I$ is the rate function given by
\begin{equation}\label{Idef}
I(\mu) = 
\begin{cases}
\frac 12 \|\nabla f\|_2^2
\qquad\quad\mbox{if} \,\,f=\sqrt{\frac{\d \mu}{\d x}}\in H^1(\R^d)
\\
\infty \qquad\qquad\quad\quad\mbox{else.}
\end{cases}
\end{equation}
Here $H^1(\R^d)$ is the usual Sobolev space of square integrable functions with square integrable derivatives. Note that the function $\mu\mapsto I(\mu)$ is translation invariant and depends only on the orbit $\widetilde\mu$.
Furthermore, this map is convex and homogenous of degree $1$.

We say that a family of measures satisfies a ``strong" large deviation principle, or simply a {\it{large deviation principle}} (LDP) if the upper bound \eqref{DVub} holds for all closed sets.

Note that we also have an extension of \eqref{map} via
$$
\Omega\to {\Mcal}_{1}(\R^d)\to {\widetilde \Mcal}_{1}(\R^d)\subset \widetilde{\mathcal X}
$$ 
which induces a probability distribution $\mathbb Q_t$ of $\widetilde L_t$ on $\widetilde{\mathcal X}$. Our second main result gives a large deviation principle for $\mathbb Q_t$ on $\widetilde{\mathcal X}$ with the rate function
\begin{equation}\label{Itilde}
{\widetilde I}(\xi)=\sum_{\widetilde\alpha\in\xi} I({\widetilde\alpha}) 
\end{equation}
where
$$
I({\widetilde\alpha})=I(\alpha) 
$$
where $I$ is defined in \eqref{Idef} and $\alpha$ is any arbitrary element of the orbit $\widetilde\alpha$ (recall that $I$ is translation invariant). We remark that although $I$ is defined
in \eqref{Idef} only on probability measures $\Mcal_1(\R^d)$, the definition canonically extends to sub-probability measures $\Mcal_{\leq 1}(\R^d)$. Here is our second main result.
\begin{theorem}\label{ThmLDP}
The family of measures $\{\mathbb Q_t\}_t$ on the compact metric space ${\widetilde{\mathcal X}}$ equipped with the metric $\mathbf D$ satisfies a large deviation principle with the rate function ${\widetilde I}(\xi)$ defined in \eqref{Itilde}.
\end{theorem}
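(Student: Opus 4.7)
Since $\widetilde{\mathcal X}$ is compact by Theorem \ref{T2.2}, every closed set is compact, so it suffices to prove a lower bound for open sets together with a local upper bound at each $\xi_0 \in \widetilde{\mathcal X}$. The rate function $\widetilde I$ is lower semicontinuous: $I$ is classically lower semicontinuous under weak convergence of sub-probability measures, and the decomposition structure of convergence in $\widetilde{\mathcal X}$ worked out in Step 2 of Theorem \ref{T2.2} guarantees that each orbit in a limit $\xi$ is approximated (after appropriate shifts) by orbits in $\xi_n$, so $\liminf_n \widetilde I(\xi_n) \geq \widetilde I(\xi)$.

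\textbf{Lower bound.} Fix an open set $G \subset \widetilde{\mathcal X}$ and $\xi = \{\widetilde \alpha_j\}_{j\in I} \in G$. Since $\widetilde I$ is a sum of nonnegative terms, I may truncate to a finite sub-collection $\{\widetilde\alpha_1,\ldots,\widetilde\alpha_k\}$. Mimicking Step 1 of Theorem \ref{T2.2}, consider
$$\mu^\star \;=\; \sum_{j=1}^k \alpha_j \star \delta_{a_j} \;+\; \Big(1 - \sum_{j=1}^k p_j\Big)\, \lambda_M \;\in\; \Mcal_1(\R^d),$$
with $a_j \in \R^d$ widely separated and $\lambda_M$ the Gaussian of covariance $M\cdot \mathbf{Id}$. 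For $M$ and $\min_{i\ne j}|a_i - a_j|$ sufficiently large, $\widetilde{\mu^\star} \in G$, and since the orbit map $\mu \mapsto \widetilde\mu$ is continuous from $(\Mcal_1,\text{weak})$ into $(\widetilde{\mathcal X}, \mathbf D)$, there is a weak neighborhood $V$ of $\mu^\star$ whose image in $\widetilde{\mathcal X}$ lies in $G$. The classical Donsker--Varadhan lower bound \eqref{DVlb} gives
$$\liminf_{t\to\infty}\frac{1}{t}\log \P(L_t \in V)\;\geq\; -I(\mu^\star).$$
By the degree-one homogeneity of $I$ and the additivity of $\|\nabla \sqrt{\,\cdot\,}\|_2^2$ across essentially disjointly supported densities, $I(\mu^\star) = \sum_j I(\alpha_j) + (1-\sum_j p_j)\, I(\lambda_M) + o(1)$. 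A direct calculation gives $I(\lambda_M) = d/(8M) \to 0$ as $M \to \infty$, so letting $M \to \infty$ and the separations diverge yields $\liminf \frac{1}{t}\log \P(\widetilde L_t \in G) \geq -\widetilde I(\xi)$.

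\textbf{Upper bound (local).} Fix $\xi_0 = \{\widetilde\alpha_1,\ldots,\widetilde\alpha_k\}$ (the infinite case with $\widetilde I(\xi_0)=\infty$ is vacuous, while finite $\widetilde I$ is reduced to this by truncation, as extra orbits only increase $\widetilde I$) and $\delta > 0$. The plan is: if $\widetilde L_t$ is $\mathbf D$-close to $\xi_0$, then Lemmas \ref{lemma1.1} and \ref{lemma2.3} together with Corollary \ref{continuity} force $L_t$ to admit a decomposition $L_t = \sum_{j=1}^k \mu_t^{(j)} + \rho_t$ into $k$ widely separated bumps of masses $\approx p_j$ centered at random sites $y_j(\omega) \in \R^d$ with profile close to $\alpha_j \star \delta_{y_j}$, plus a disintegrating residue $\rho_t$. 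Cover the possible centers by a spatial grid of spacing $e^{o(t)}$ (contributing a subexponential combinatorial factor), condition on the cell of each $y_j$, and use the strong Markov property to split the Brownian path into excursions in and out of the balls $B(y_j, R)$. The classical Donsker--Varadhan upper bound \eqref{DVub} applied on each ball yields
$$\P\bigl(\text{local profile in } B(y_j,R) \approx \alpha_j,\; T_j \approx t p_j\bigr)\;\lesssim\; \exp\bigl(- t\, I(\alpha_j)\bigr),$$
using the homogeneity $I(\alpha_j/p_j) = I(\alpha_j)/p_j$ with $T_j \approx tp_j$. Multiplying across $j$ and summing the $e^{o(t)}$ grid configurations closes the bound.

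\textbf{Main obstacle.} The principal difficulty is the rigorous implementation of the upper bound: turning the abstract $\mathbf D$-closeness into an explicit pathwise bump-decomposition of $L_t$ and controlling the entropy cost of the random centers $y_j$. A cleaner route that sidesteps the explicit bookkeeping is to establish the Laplace principle
$$\lim_{t\to\infty}\frac{1}{t}\log \E\!\left[\exp\bigl(t F(\widetilde L_t)\bigr)\right]\;=\; \sup_{\xi \in \widetilde{\mathcal X}}\bigl[F(\xi) - \widetilde I(\xi)\bigr]$$
first for $F = \Lambda(f,\cdot)$ with $f \in \mathcal F$, then extend by density in $C(\widetilde{\mathcal X})$, and invoke Bryc's inverse on the compact space $\widetilde{\mathcal X}$ to recover the LDP with rate $\widetilde I$. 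Corollary \ref{continuity} supplies the continuity of the test integrals $\Lambda(f, \cdot)$, while the compactification of Theorem \ref{T2.2} guarantees that the suprema are attained, which is exactly what makes Bryc's inverse applicable.
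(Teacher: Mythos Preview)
Your lower bound is essentially the paper's argument (Lemma \ref{XLDPlb}): approximate $\xi$ by the convex combination $\mu^\star$ of Step~1 of Theorem~\ref{T2.2}, invoke the classical lower bound \eqref{DVlb}, and control $I(\mu^\star)$ via sub-additivity. The paper uses the inequality $I(\mu^\star)\le \sum_j I(\alpha_j)+(1-\sum_j p_j)I(\lambda_M)$ coming directly from convexity and degree-one homogeneity of $I$, rather than your ``additivity across disjoint supports'', but the outcome is the same.

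The upper bound, however, has a genuine gap. Your first route (bump decomposition, Markov splitting into excursions, classical DV on each ball) is not a proof: the decomposition of $L_t$ into pieces near random centers $y_j(\omega)$ is a statement about a \emph{single} measure, and there is no mechanism to convert ``$\widetilde L_t$ is $\mathbf D$-close to $\xi_0$'' into a pathwise splitting of the trajectory into $k$ excursions whose occupation times are close to $tp_j$. The Brownian path may visit the neighborhoods of the $y_j$ in an arbitrarily interleaved fashion, so the strong Markov property does not produce $k$ independent blocks to which \eqref{DVub} can be applied separately. Your second route via Bryc's inverse is circular as stated: you assert the Laplace principle for $F=\Lambda(f,\cdot)$ and then extend by density, but you give no method for establishing even these basic Laplace limits. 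Computing $\lim_t t^{-1}\log\E[\exp\{t\Lambda(f,\widetilde L_t)\}]$ and identifying it as $\sup_\xi[\Lambda(f,\xi)-\widetilde I(\xi)]$ is precisely the content of the theorem, and nothing in the proposal addresses how to obtain the upper half of that limit.

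The paper's upper bound (Proposition \ref{XLDPub}) proceeds by a different and concrete construction. For $u_1,\ldots,u_k\in\mathcal U$, $c,R>0$, it builds the test function $g(x)=c+\sum_i u_i(x+a_i)\varphi((x+a_i)/R)$ and the translation-invariant functional
\[
\widetilde F(u_1,\ldots,u_k,c,R,\widetilde\mu)\;=\;\sup_{\substack{a_1,\ldots,a_k\\ |a_i-a_j|\ge 4R}}\int_{\R^d}\frac{-\tfrac12\Delta g(x)}{g(x)}\,\mu(\d x).
\]
The point is that for \emph{fixed} centers, $\E\exp\{\int_0^t \frac{-\frac12\Delta g}{g}(W_s)\,\d s\}\le g(0)/c$ by a direct Feynman--Kac identity (Lemma \ref{L3.2}); the supremum over $a_1,\ldots,a_k$ is then handled by a coarse-graining argument (restrict $|a_i|\le t^2$ at super-exponentially small cost, discretize to a grid of polynomial cardinality, and use a union bound). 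This yields $\limsup_t t^{-1}\log\E[\exp\{t\widetilde F\}]\le 0$. Lemma \ref{L3.3} shows that along $\widetilde\mu_n\to\xi=\{\widetilde\alpha_j\}$ one has $\liminf_n\widetilde F\ge \sum_{i=1}^k\int\frac{-\frac12\Delta u_{i,R}}{c+u_{i,R}}\,\d\alpha_i$, and Lemma \ref{L3.4} identifies the supremum of the right side over all $k,R,c,u_i$ as $\widetilde I(\xi)$. A Chebyshev inequality then gives the local upper bound \eqref{localub}. The essential idea you are missing is this family of $-\tfrac12\Delta g/g$ functionals with the built-in optimization over centers: it is what converts the classical DV variational representation of $I$ into one adapted to orbits, while the Feynman--Kac bound plus coarse graining replaces the exponential tightness you do not have.
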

We split the proof into three main steps. First we prove that the function $\widetilde I$ is lower semicontinuous on $\widetilde{\mathcal X}$.

\begin{lemma}[{\bf Lower semicontinuity}] If $\xi_n\to \xi$ in ${\widetilde{\mathcal X}}$, then
$$
\liminf_{n\to\infty} {\widetilde I}(\xi_n)\ge {\widetilde I}(\xi).
$$
\end{lemma}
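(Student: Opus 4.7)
The plan is to reduce the claim to the classical vague lower semicontinuity of the Donsker--Varadhan rate function $I$ on $\Mcal_{\leq 1}(\R^d)$, combined with a matching argument between the orbits appearing in $\xi_n$ and those in $\xi$. To begin, I would record three standard facts: the formula \eqref{Idef} extends without change to $\Mcal_{\leq 1}(\R^d)$; $I$ is translation-invariant and hence well-defined on $\widetilde\Mcal_{\leq 1}$; and $I$ is lower semicontinuous on $\Mcal_{\leq 1}$ in the vague topology. The last point follows from Rellich--Kondrachov: if $\mu_n\vague\mu$ with $I(\mu_n)\le C$, the densities $f_n^2=\d\mu_n/\d x$ give $f_n$ bounded in $H^1(\R^d)$, so along a further subsequence $f_n\to f$ weakly in $H^1$ and in $L^2_{\rm loc}$; vague uniqueness of the limit identifies $\mu=f^2\,\d x$, and weak lower semicontinuity of the Dirichlet norm yields $I(\mu)\le\liminf_n I(\mu_n)$.

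Next, I would enumerate $\xi=\{\widetilde\alpha_1,\widetilde\alpha_2,\ldots\}$ counted with multiplicity. Because $\widetilde I(\xi)=\lim_{N\to\infty}\sum_{j=1}^N I(\widetilde\alpha_j)$, it suffices to show, for each fixed $N$, that $\liminf_n\widetilde I(\xi_n)\ge\sum_{j=1}^N I(\widetilde\alpha_j)$, and then to let $N\to\infty$. The crux is the following \emph{matching claim}: after passing to a subsequence in $n$, for each $j\le N$ one can pick a distinct orbit $\widetilde\beta_j^{(n)}\in\xi_n$ (with correct multiplicity) and a representative $\beta_j^{(n)}\in\Mcal_{\leq 1}$ such that $\beta_j^{(n)}\vague\alpha_j$. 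Granted this matching, since the chosen orbits are distinct,
$$
\widetilde I(\xi_n)=\sum_{\widetilde\alpha\in\xi_n} I(\widetilde\alpha)\ge \sum_{j=1}^N I(\widetilde\beta_j^{(n)})=\sum_{j=1}^N I(\beta_j^{(n)}),
$$
and vague lower semicontinuity of $I$ applied term by term, together with the elementary bound $\liminf_n\sum_{j\le N}a_j^{(n)}\ge\sum_{j\le N}\liminf_n a_j^{(n)}$ for finite sums of nonnegative reals, gives $\liminf_n\widetilde I(\xi_n)\ge\sum_{j=1}^N I(\alpha_j)$.

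The main obstacle is the matching claim. My plan is to adapt the Baire-category strategy of Step 2 in the proof that $\mathbf D$ is a metric, now applied to a sequence of collections rather than a single pair. Starting with the orbit $\widetilde\alpha_1\in\xi$ of largest mass, the convergence $\Lambda(f_r,\xi_n)\to\Lambda(f_r,\xi)$ for every $r$ in a countable dense family $\{f_r\}\subset\mathcal F$, combined with the sequential compactness of $\widetilde{\mathcal X}$ (Theorem \ref{T2.2}) and a diagonal extraction, produces one orbit $\widetilde\beta_1^{(n)}\in\xi_n$ with $\Lambda(f_r,\widetilde\beta_1^{(n)})\to\Lambda(f_r,\widetilde\alpha_1)$ for every $r$; the Baire-category argument then identifies the limit as $\widetilde\alpha_1$ itself, and the decomposition of Lemma \ref{L1.1} yields representatives with $\beta_1^{(n)}\vague\alpha_1$. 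Peeling off this matched pair from both $\xi_n$ and $\xi$ and iterating handles $j=2,\ldots,N$; at each stage the wide-separation property (Lemma \ref{lemma2.3}) ensures that the residual collections remain convergent in $\widetilde{\mathcal X}$ and that the previously extracted orbits stay distinct from the newly extracted ones. The delicate point is controlling masses through the iteration, which is where Lemma \ref{lemma2.3} together with the representation from Step 2 of Theorem \ref{T2.2} is used.
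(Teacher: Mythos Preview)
Your matching claim is false in precisely the case that matters most. Take $\xi_n=\{\widetilde\mu_n\}$ consisting of a \emph{single} orbit in $\widetilde\Mcal_1$ (this is the principal case, since $\widetilde\Mcal_1$ is dense in $\widetilde{\mathcal X}$), and suppose $\widetilde\mu_n\to\xi=\{\widetilde\alpha_1,\widetilde\alpha_2,\ldots\}$ with at least two orbits in the limit. Then $\xi_n$ contains exactly one orbit, so for $N\ge 2$ there is no way to select \emph{distinct} orbits $\widetilde\beta_1^{(n)},\ldots,\widetilde\beta_N^{(n)}\in\xi_n$. Your first extraction step already fails: the only candidate is $\widetilde\mu_n$ itself, and $\Lambda(f_r,\widetilde\mu_n)\to\Lambda(f_r,\xi)=\sum_j\Lambda(f_r,\widetilde\alpha_j)$, not $\Lambda(f_r,\widetilde\alpha_1)$. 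The Baire-category argument you invoke matches orbits between two \emph{fixed} collections; it does not manufacture multiple orbits inside a sequence of singletons.

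The phenomenon you are missing is that a single probability measure $\mu_n$ can split, in the limit, into several widely separated lumps, and the content of the lemma is precisely that the Dirichlet energy is (asymptotically) superadditive across such a splitting. This cannot be obtained from vague lower semicontinuity of $I$ on $\Mcal_{\le 1}$ applied orbit-by-orbit, because there is only one orbit to apply it to. The paper's proof confronts this directly: writing $\mu_n=g_n^2\,\d x$ and using the decomposition \eqref{mudecompose} with centers $a_n^{(i)}$ drifting apart, it localizes $g_n$ via a smooth partition of unity $\varphi\big((x+a_n^{(i)})/r_n\big)$ with $r_n\to\infty$ but $2r_n\le\min_{i\ne j}|a_n^{(i)}-a_n^{(j)}|$, sets $f_n^{(i)}=\big(g_n\,\varphi((x+a_n^{(i)})/r_n)\big)^2$, and computes
\[
\sum_{i=1}^k I(f_n^{(i)})\le \frac12\int|\nabla g_n|^2\,\d x + o(1)=I(\mu_n)+o(1),
\]
because the cutoffs have disjoint supports and the commutator term $r_n^{-1}g_n(\nabla\varphi)$ is negligible. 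This IMS-type localization is the missing idea; once you have it, lower semicontinuity of the Dirichlet form under weak $H^1$ convergence finishes the single-orbit case, and the multi-orbit case follows by treating each orbit of $\xi_n$ separately.
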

\begin{proof}
Let us first consider the case where, for each $n\in \N$, $\xi_n$ consists of a single orbit ${\widetilde\mu}_n$ and the limit
$\xi$ is a finite or countable collection $\{\widetilde\alpha_i\}$ 
arranged so that their masses $\{p_i\}$ form a non-increasing sequence. Given $\eps>0$, it is then possible to write (recall \eqref{mudecompose})
$$
\mu_n=\sum_{i=1}^k \alpha^{\ssup i}_n+\beta_n
$$
for some $k\in \N$ such that the following properties hold: For each $i=1,\dots,k$, there are sequences $\{a^{\ssup i}_n\}_n\subset \R^d$ such that 

\begin{gather*} 
\alpha^{\ssup i}_n\ast \delta_{a^{\ssup i}_n}\Rightarrow\alpha_i\in\widetilde\alpha_i,\\
\lim_{n\to\infty}\inf_{i\not=j}|a^{\ssup i}_n-a^{\ssup j}_n|=\infty,\\
 \lim_{n\to\infty}\int V(x-y)\alpha^{\ssup i}_n(\d x)\beta_n(\d y)=0,\\
 \limsup_{n\to\infty}\int V(x-y)\beta_n(\d x)\beta_n(\d y)\le 2\eps,
 \end{gather*}
for all $V\in \mathcal F_2$. In particular, since for each $i=1,\dots,k$, $\alpha^{\ssup i}_n$ is weakly convergent, they are a tight sequence and therefore $\alpha^{\ssup i}_n$ is concentrated near $-a^{\ssup i}_n$. We can find a smooth cut-off function $\varphi(x)$ which is $1$ in the unit ball, $0$ outside a ball of radius $2$ and smoothly varies in between. In particular, $0\le \varphi\le 1$. For $r_n>0$ to  be  suitably chosen later  we will have  a partition of unity by setting
 $$
 1=\sum_{i=1}^k\bigg\{\varphi\bigg(\frac{x+a^{\ssup i}_n}{r_n}\bigg)\bigg\}^2+\bigg[1-\sum_{i=1}^k\bigg\{\varphi\bigg(\frac{x+a^{\ssup i}_n}{r_n}\bigg)\bigg\}^2\bigg] \qquad\qquad r_n>0.
 $$
 We can assume that $I(\mu_n)<\infty$ for each $n\in \N$ (since otherwise there is nothing to prove) and hence $\mu_n(\d x)=f_n(x) \d x$ and $f_n\in H^1(\R^d)$. If $g_n=\sqrt{f_n}$ and $\frac{1}{2}\int_{\R^d}|\nabla g_n|^2 \d x\le \ell$,
  we need to prove that $\alpha_1,\ldots,\alpha_k$ are all absolutely continuous with densities $f^{\ssup 1},\dots,f^{\ssup k}$ and
 $$
 \sum_{i=1}^k  I(f^{\ssup i})\le \ell.
 $$
We define, for any $ i=1,\dots, k,$
$$
 \begin{aligned}
 f_n^{\ssup i}(x)&=f_n(x)\bigg\{\varphi\bigg(\frac{x+a^{\ssup i}_n}{r_n}\bigg)\bigg\}^2\\
 &=\bigg\{g_n(x)\varphi\bigg(\frac{x+a^{\ssup i}_n}{r_n}\bigg)\bigg\}^2
 \end{aligned}
 $$
and  we let $r_n\to\infty$ in such a way that $2r_n\le \min_{i\not=j}|a^{\ssup i}_n-a^{\ssup j}_n|$. Then $f^{\ssup i}_n(x)\d x\Rightarrow \alpha_i$ for $i=1,2,\ldots,k$ and
 $$
 I(f^{\ssup i}_n)=\frac{1}{2}\int \bigg|\nabla g_n(x)\varphi\bigg(\frac{x+a^{\ssup i}_n}{r_n}\bigg)+\frac{1}{r_n}g_n(x)\big(\nabla \varphi\big)\bigg(\frac{x+a^{\ssup i}_n}{r_n}\bigg)\bigg|^2\d x
 $$
 Since $r_n\to\infty$ , $\varphi$  and $\nabla\varphi$ are uniformly bounded and the integrals $\int |g_n(x)|^2 \d x$ and $\int |\nabla g_n|^2 \d x $  are bounded, only the first term in the integral counts. Since the functions 
 $$\bigg\{\varphi\bigg(\frac{x+a^{\ssup i}_n}{r_n}\bigg)\bigg\}_{i=1,\dots k}
 $$
  do not overlap and $0\le \varphi\le 1$, we infer
 $$
 \begin{aligned}
 \sum_{i=1}^k \frac{1}{2}\int\bigg\{ \big|\nabla g_n(x)\big|\varphi\bigg(\frac{x+a^{\ssup i}_n}{r_n}\bigg)\bigg\}^2\d x
 &\le \frac{1}{2}\int |\nabla g_n(x)|^2\d x
 \\
 &=\frac{1}{2}\int \big|\nabla \sqrt{f_n(x)}\big|^2\d x
 \\ 
& =  I(f_n)\\
&\le\ell.
\end{aligned}
 $$
 This implies that any weak limit $\alpha_i$ of $f^{\ssup i}_n \d x$ has a density $f^{\ssup i}$ and $\sum_{i=1}^k I(f^{\ssup i})\le \ell$.

 Finally if $\xi_n$ consists of multiple orbits $\{\xi^{\ssup i}_n\}_i$ with $\sum_i \widetilde I(\xi_n^{\ssup i})\le \ell$, we can choose subsequences such that, for each $i$, $\xi_n^{\ssup i}$ has a limit which is a collection $\xi^{\ssup i}$ of orbits $\{{\widetilde\alpha}_{j}^{\ssup i}\}_j$. The last step implies, for each $i$, $\sum_j {\widetilde I}({\widetilde \alpha}_{j}^{\ssup i})\le \ell^{\ssup i}$  where
 $\ell^{\ssup i}=\liminf_{n\to\infty} \widetilde I(\xi^{\ssup i}_n)$. Hence, 
 $$
 I(\xi)=\sum_i \ell^{\ssup i} \le \liminf_{n\to\infty} \sum_i\widetilde I(\xi^{\ssup i}_n) \leq \ell.
 $$
 This proves the lemma.
 \end{proof}
Next we derive the large deviation lower bound 
for $\mathbb Q_t$ on $\widetilde{\mathcal X}$. This is easily done given the translation invariance, convexity and homogeneity of $I$ and the denseness of the space $\widetilde{\Mcal}_1(\R^d)$ in $\widetilde{\mathcal X}$.

\begin{lemma}[{\bf Lower Bound}] \label{XLDPlb}
For any open set $G$ in $\widetilde{\mathcal X}$, 
\begin{equation}\label{thmlb}
\liminf_{t\to\infty}\frac{1}{t}\log \mathbb Q_t(G)\ge -\inf_{\xi\in G} {\widetilde I}(\xi)
\end{equation}
\end{lemma}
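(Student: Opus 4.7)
The plan is to reduce to the classical Donsker--Varadhan weak LDP lower bound on $\Mcal_1(\R^d)$, by approximating every $\xi\in G$ with $\widetilde{I}(\xi)<\infty$ by a single orbit $\widetilde\mu$, $\mu\in\Mcal_1(\R^d)$, whose Fisher information is as close as we like to $\widetilde{I}(\xi)$. If no $\xi\in G$ has finite rate, the inequality is vacuous, so I fix such a $\xi=\{\widetilde\alpha_i\}$ and $\eps>0$.

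To build the approximating $\mu$ I will reuse the construction in Step 1 of Theorem \ref{T2.2}: for a suitable $n\in\N$, widely separated shifts $a_1,\dots,a_n\in\R^d$, and large $M>0$, set
\[
\mu_{n,M}=\sum_{i=1}^n \alpha_i\star\delta_{a_i}+\Bigl(1-\sum_{i=1}^n p_i\Bigr)\lambda_M,
\]
where $\lambda_M$ is the centered Gaussian on $\R^d$ with covariance $M\,\mathbf{Id}$. By that same construction, $\widetilde{\mu_{n,M}}\to\xi$ in $\widetilde{\mathcal X}$ as $n\to\infty$ and $M\to\infty$, so for all sufficiently large $n$ and $M$ the orbit $\widetilde{\mu_{n,M}}$ lies in the open set $G$.

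The key step is the rate estimate. Translation invariance of $I$, together with the convexity and degree-one homogeneity already noted, yields the subadditivity $I(\nu_1+\nu_2)=2I(\tfrac12(\nu_1+\nu_2))\le I(\nu_1)+I(\nu_2)$ on $\Mcal_{\le 1}(\R^d)$, and a direct computation of $|\nabla\sqrt{g_M}|^2$ for the Gaussian density gives $I(\lambda_M)=d/(8M)$. Iterating the subadditivity,
\[
I(\mu_{n,M})\le \sum_{i=1}^n I(\alpha_i)+\frac{d}{8M}\le \widetilde{I}(\xi)+\frac{d}{8M},
\]
so for $M$ sufficiently large we get $I(\mu_{n,M})\le \widetilde{I}(\xi)+\eps$.

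Finally, the quotient map $\nu\mapsto\widetilde\nu$ from $\Mcal_1(\R^d)$ with the weak topology into $(\widetilde{\mathcal X},\mathbf D)$ is continuous: each $\Lambda(f_r,\cdot)$ is weakly continuous on $\Mcal_1(\R^d)$ and the series defining $\mathbf D$ converges uniformly. Hence $U:=\{\nu\in\Mcal_1(\R^d):\widetilde\nu\in G\}$ is weakly open and contains $\mu:=\mu_{n,M}$. The classical lower bound \eqref{DVlb} then gives
\[
\liminf_{t\to\infty}\frac1t\log\mathbb Q_t(G)=\liminf_{t\to\infty}\frac1t\log\P(L_t\in U)\ge -I(\mu)\ge -\widetilde{I}(\xi)-\eps.
\]
Letting $\eps\downarrow 0$ and taking the infimum over $\xi\in G$ concludes the proof. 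The main obstacle is the middle step --- producing a single probability measure $\mu$ whose orbit lies in $G$ while its rate is within $\eps$ of $\widetilde{I}(\xi)$; this hinges on subadditivity of $I$ (the substantive structural input) and the vanishing of the Fisher information of $\lambda_M$ as $M\to\infty$.
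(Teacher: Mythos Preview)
Your proof is correct and follows essentially the same route as the paper's: approximate $\xi$ by the single-orbit $\widetilde\mu_{n,M}$ from Step~1 of Theorem~\ref{T2.2}, use translation invariance, convexity and degree-one homogeneity of $I$ to bound $I(\mu_{n,M})\le \widetilde I(\xi)+I(\lambda_M)$, and then invoke the classical Donsker--Varadhan lower bound \eqref{DVlb}. Your version is in fact slightly more careful than the paper's in two respects: you explicitly justify that the quotient map $\nu\mapsto\widetilde\nu$ is continuous (so the preimage of $G$ is weakly open), and you compute $I(\lambda_M)=d/(8M)$ rather than the paper's looser $1/M$.
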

\begin{proof}
For \eqref{thmlb} it is enough to prove, given $\xi\in{\widetilde{\mathcal X}}$ with $\widetilde I(\xi)<\infty$, 
\begin{equation}\label{thmlb0}
\liminf_{t\to\infty}\frac{1}{t}\log \mathbb Q_t(U)\ge -{\widetilde I}(\xi).
\end{equation}
for any neighborhood $U\owns \xi$.


We claim that any $\xi\in{\widetilde{\mathcal X}}$ with ${\widetilde I}(\xi)<\infty$ can be approximated by $\xi_n\in \widetilde{\mathcal X}$ such that
\begin{equation}\label{claim}
\limsup_{n\to\infty} {\widetilde I}(\xi_n)\le {\widetilde I}(\xi).
\end{equation}
Indeed, recall from step-1 of the proof of Theorem \ref{T2.2} that $\widetilde\Mcal_1$ is dense in $\widetilde{\mathcal X}$ and $\xi= \{\widetilde\alpha_j\}\in \widetilde{\mathcal X}$ can be approximated 
by the sequence $(\widetilde\mu_n)_n$ in $\widetilde\Mcal_1$, where, as constructed in \eqref{convexcomb},
$$
 \mu_n:=  \sum_{j=1}^n \alpha_j\star  \delta_{a_j} + \bigg(1-\sum_{j=1}^n p_j\bigg) \lambda_M \in \Mcal_1(\R^d),
 $$
and $\lambda_M$ is a Gaussian with mean vector $\mathbf 0$ and covariance matrix $M\, \bf{Id}$. Furthermore, since $I(\cdot)$ on $\mathcal M_1(\R^d)$ is translation invariant, homogeneous of degree $1$ and convex, it is also sub-additive on $\mathcal M_{\le 1}(\R^d)$. Then,
$$
\begin{aligned}
I(\mu_n)&\le \sum_{j=1}^n I(\alpha_j\star \delta_{a_j})+\bigg(1-\sum_{j=1}^n p_j\bigg)I(\lambda_M)
\\
&= \sum_{j=1}^n I(\alpha_j)+\bigg(1-\sum_{j=1}^n p_j\bigg)I(\lambda_M)
\\
&\le {\widetilde I}(\xi)+I(\lambda_M)\\
&={\widetilde I}(\xi)+\frac{1}{M}.
\end{aligned}
$$
Since we can choose $M$ to depend on $n$, make it arbitrarily large and take $(\xi_n)$ to be the single orbit sequence $(\widetilde\mu_n)$,
\eqref{claim} is proved. The desired lower bound \eqref{thmlb0} now follows from the large deviation lower bound \eqref{DVlb} 
of the distribution of $L_t$ on $\Mcal_1(\R^d)$.
\end{proof}

Finally we turn to the large deviation upper bound for $\mathbb Q_t$. 

\begin{prop}[{{\bf{Upper bound of Theorem \ref{ThmLDP}}}}]\label{XLDPub}
For any closed set $F$ in $\widetilde{\mathcal X}$, 
\begin{equation}\label{thmub}
\limsup_{t\to\infty}\frac{1}{t}\log \mathbb Q_t(F)\leq -\inf_{\xi\in F} {\widetilde I}(\xi)
\end{equation}
\end{prop}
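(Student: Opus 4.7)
Since $\widetilde{\mathcal X}$ is compact (Theorem~\ref{T2.2}), every closed $F\subset\widetilde{\mathcal X}$ is automatically compact. By the standard compact-covering argument, it then suffices to establish a local upper bound: for each $\xi_0\in\widetilde{\mathcal X}$ and each $\ell<\widetilde I(\xi_0)$, exhibit an open neighborhood $U\ni\xi_0$ with
\begin{equation*}
\limsup_{t\to\infty}\tfrac{1}{t}\log\mathbb Q_t(U)\le -\ell.
\end{equation*}
Fixing $\ell<\inf_{\xi\in F}\widetilde I(\xi)$ and covering $F$ by finitely many such $U_{\xi}$ (for $\xi\in F$) then yields \eqref{thmub}.

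\textbf{Setting up the local bound.} Write $\xi_0=\{\widetilde\alpha_j\}_j$ with masses $p_j=\alpha_j(\R^d)$. Since $\widetilde I(\xi_0)=\sum_j I(\alpha_j)>\ell$, truncate to finitely many indices $j=1,\dots,k$ with $\sum_{j=1}^k I(\alpha_j)>\ell$, and pick $R>0$ so large that each $\alpha_j$ is essentially supported in $B(0,R)$. Using the concentration-function / decomposition scheme developed in Step~2 of the proof of Theorem~\ref{T2.2}, one can arrange a sufficiently small $\mathbf D$-neighborhood $U$ of $\xi_0$ such that the event $\{\widetilde L_t\in U\}$ forces the existence of (random, path-dependent) centers $a_1,\dots,a_k\in\R^d$ with $\min_{i\ne j}|a_i-a_j|$ large and with each shifted restriction $(L_t\star\delta_{a_j})|_{B(0,R)}$ lying within a prescribed weak neighborhood $V_j$ of $\alpha_j$ in $\Mcal_{\le 1}(\R^d)$.

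\textbf{Discretization and excursions.} To absorb the randomness of the shifts $a_j$, discretize $\R^d$ at scale $R$ and replace each $a_j$ by a nearest lattice point. For any fixed tuple of discrete centers, the event reduces to a statement about the excursions of Brownian motion into the $k$ disjoint balls $B(-a_j,R)$. By the strong Markov property, the sojourns inside these balls decouple: given entry/exit times, the profile of the empirical measure on each $B(-a_j,R)$ is governed by an independent Brownian piece whose total duration is $\approx t p_j$. Applying the classical Donsker--Varadhan weak upper bound \eqref{DVub} piecewise (the restricted measure lies in the compact set of sub-probability measures supported in $B(-a_j,R)$), and invoking the homogeneity $I(c\alpha)=c\,I(\alpha)$, one finds that the $j$-th piece contributes exponential cost $tp_j\cdot I(\alpha_j/p_j)=t\,I(\alpha_j)$. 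Summing yields
\begin{equation*}
\limsup_{t\to\infty}\tfrac{1}{t}\log \mathbb Q_t(U)\le -\sum_{j=1}^k I(\alpha_j)<-\ell,
\end{equation*}
where the lattice union bound contributes only a subexponential prefactor (via exponential Brownian travel-time bounds) and the residual dust mass $1-\sum_{j\le k}p_j$ has rate zero.

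\textbf{Main obstacle.} The delicate point is the excursion/Markov bookkeeping: one must justify that the path cleanly splits into $k$ essentially independent sojourns into the widely separated balls $B(-a_j,R)$ and that each sojourn independently satisfies a Brownian LDP with rate $I(\alpha_j)$. The translation invariance and degree-one homogeneity of $I$ are what make the piecewise rates sum to $\widetilde I(\xi_0)$. A subtle subsidiary point is controlling shifts $|a_j|$ possibly as large as $O(t)$, which requires standard Gaussian travel-time estimates so that the lattice union bound stays subexponential; and one must handle the dust portion showing that thinly spread mass contributes $o(t)$ to the exponent. Once this combinatorial/probabilistic scaffold is set up, the argument closes by matching $\sum_{j=1}^k I(\alpha_j)$ against the truncation tolerance chosen at the outset.
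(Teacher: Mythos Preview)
Your reduction to a local estimate via compactness is exactly right and matches the paper. Beyond that point, however, your route diverges sharply from the paper's and carries a real gap.

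\textbf{What the paper actually does.} The paper never attempts to decompose the Brownian path into excursions. Instead it uses the dual (variational) representation of the Donsker--Varadhan rate function,
\[
I(\alpha)=\sup_{u>0}\int_{\R^d}\frac{-\tfrac12\Delta u}{u}\,\alpha(\d x),
\]
and builds, for each $k$, test functions $g(x)=c+\sum_{i=1}^k u_i(x+a_i)\varphi((x+a_i)/R)$ with widely separated centers $a_i$. For such $g$ one has the exact Feynman--Kac identity $\E_x[g(W_t)\exp\{\int_0^t\frac{\Delta g}{2g}(W_s)\,\d s\}]=g(x)$, which gives $\E[\exp\{t\int\frac{-\Delta g}{2g}\,\d L_t\}]\le g(0)/c$ uniformly in the centers. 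A coarse-graining over the centers (a polynomial-in-$t$ union bound) turns this into a bound on $\E[\exp\{t\widetilde F\}]$ where $\widetilde F$ takes the supremum over all admissible centers. Tchebycheff then yields the local upper bound with constant $\sup \widetilde\Lambda$, and that supremum is identified with $\widetilde I(\xi)$ by optimizing over $k,R,c,u_1,\dots,u_k$. The whole argument is an exponential-martingale/Chebyshev scheme; no path decomposition is needed.

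\textbf{Where your argument breaks.} Your key step is the sentence ``by the strong Markov property, the sojourns inside these balls decouple \dots\ applying the classical Donsker--Varadhan weak upper bound \eqref{DVub} piecewise \dots\ the $j$-th piece contributes exponential cost $t\,I(\alpha_j)$.'' This does not follow. The measure $(L_t\star\delta_{a_j})|_{B(0,R)}$ is \emph{not} the occupation measure of a Brownian motion run for time $\approx t p_j$; it is the restriction of the occupation measure of the full path, built from an uncontrolled number of re-entries into $B(-a_j,R)$ with random entry points and durations. The classical weak upper bound \eqref{DVub} applies to the full occupation measure $L_t\in\Mcal_1(\R^d)$ lying in a compact set of $\Mcal_1(\R^d)$, not to such restricted pieces, and the rate function for occupation in a bounded domain (with killing or reflection at $\partial B(0,R)$) is not the free-space $I$. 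Even granting a conditional LDP for each excursion, piecing together a random number of excursions of random durations into a product bound with exponent $\sum_j I(\alpha_j)$ requires an argument you have not supplied and which is genuinely delicate in the transient regime $d\ge 3$. Your own ``main obstacle'' paragraph correctly identifies this as the crux, but the proposal does not resolve it.

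In short: the compactness reduction is fine; the excursion scheme is the gap. The paper sidesteps it entirely by the Feynman--Kac/Chebyshev method with multi-bump test functions, which is what you should adopt.
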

Let  $\mathcal U$ be the space of  functions of the form $u=c+v$  where $v$ is a smooth nonnegative function  with compact support on $\R^d$ and $c>0$ is a positive constant. Let $\varphi(x)$ be a smooth function satisfying $0\le \varphi(x)\le 1$, $\varphi(x)=1$ inside the unit ball and $\varphi(x)=0$ outside the ball of radius $2$. For  any $k\ge 1, R >0$, $u_1,\ldots, u_k\in \mathcal U$ and $a_1,\ldots, a_k\in\R^d$ and  $c>0 $ consider the function
\begin{equation}\label{gdef}
g(x)= g(k, R, c, a_1,\ldots, a_k, x)=c+ \sum_{i=1}^k u_i(x+a_i) \varphi\bigg(\frac{x+a_i}{R}\bigg) 
\end{equation}
and define $F: \Omega\to \R$ by setting

\begin{equation}\label{F}
\begin{aligned}
F(u_1,\ldots, u_k, c,  R, t, \omega)&=\sup_{a_1,\ldots a_k\atop \inf_{i\not=j}|a_i-a_j|\ge 4R} \frac{1}{t}\int_0^t  \frac{-\frac{1}{2}\Delta g\big(W(s)\big)}{g\big(W(s)\big)}ds\\
&=\sup_{a_1,\ldots a_k\atop \inf_{i\not=j}|a_i-a_j|\ge 4R} \int_{\R^d}  \frac{-\frac{1}{2}\Delta g( x)}{g(x)}L_t(\d x).
\end{aligned}
\end{equation}
Since the last expression depends only on the image $\widetilde{L}_t$ of $L_t$ in $\widetilde{\mathcal X}$, we write
\begin{equation}\label{Ftilde}
\begin{aligned}
\widetilde {F} \big(u_1,\ldots, u_k, c,R,  {\widetilde L}_t\big)&=\sup_{a_1,\ldots a_k\atop \inf_{i\not=j}|a_i-a_j|\ge 4R} \int_{\R^d}  \frac{-\frac{1}{2}\Delta g( x)}{g(x)}L_t(\d x)\\
&=F(u_1,\ldots, u_k, c,  R, t, \omega).
\end{aligned}
\end{equation}
 We will need the next three lemmas to prove the upper bound. First we prove that
$\widetilde F(\cdot)$ grows only sub-exponentially as $t\to\infty$. 

\begin{lemma}\label{L3.2}
For  any $k\ge 1, R >0$, $u_1,\ldots, u_k\in \mathcal U$ and $c>0$,
\begin{equation}\label{subexp}
\begin{aligned}
\limsup_{t\to\infty}&\frac{1}{t} \log \E\bigg\{\exp\big\{t \widetilde{F}(u_1,\ldots,u_k,c, R, \widetilde{L}_t)\big\}\bigg\}\\
&=\limsup_{t\to\infty}\frac{1}{t} \log \E\bigg\{\exp\big\{t F(u_1,\ldots, u_k, c,  R, t, \omega)\big\}\bigg\} \\
&\leq 0.
\end{aligned}
\end{equation}
\end{lemma}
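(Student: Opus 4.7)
The plan is to combine a Girsanov/Feynman--Kac identity, which will give a uniform (in $(a_1,\ldots,a_k)$) bound on the expectation of $\exp(-\int_0^t \frac{\Delta g}{2g}(W_s)\,ds)$ for fixed $a$, with a spatial discretization of the admissible domain of $a$. The main technical issue is that the supremum is over an uncountable family; this will be handled by a Lipschitz estimate of $a\mapsto \Delta g_a/g_a$ (so a lattice of mesh $\delta$ suffices, at the cost of an error $L\delta$) and by confining Brownian motion to a ball of radius $\gamma t$ via the reflection inequality.

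\textbf{Step 1 (Martingale identity for fixed $a$).} Writing $u_i=c_i+v_i$ with $c_i>0$ and $v_i\in C_c^\infty(\R^d)$, $v_i\geq 0$, one has $g=g_a\geq c>0$ everywhere and $\|g\|_{C^2}$ bounded uniformly in $a$. Applying It\^o's formula to $\log g(W_s)$ would produce
\begin{equation*}
\exp\left(-\int_0^t \frac{\Delta g}{2g}(W_s)\,ds\right)=\frac{g(0)}{g(W_t)}\cdot\mathcal{E}_t^g,\qquad \mathcal{E}_t^g:=\exp\Bigl(\int_0^t\tfrac{\nabla g}{g}(W_s)\cdot dW_s-\tfrac12\int_0^t\bigl|\tfrac{\nabla g}{g}\bigr|^2(W_s)\,ds\Bigr).
\end{equation*}
Since $\nabla g/g$ is uniformly bounded, Novikov's condition holds and $\E\mathcal{E}_t^g=1$; together with $g(W_t)\geq c$ this gives the a priori bound
\begin{equation*}
\E\exp\left(-\int_0^t\frac{\Delta g}{2g}(W_s)\,ds\right)\leq \frac{g(0)}{c}\leq \frac{c+\sum_i\|u_i\|_\infty}{c}=:C_1,
\end{equation*}
valid for every admissible $a$.

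\textbf{Step 2 (Reduction to a compact set and discretization).} The integrand $-\Delta g(x)/(2g(x))$ vanishes outside $\bigcup_i B(-a_i,2R)$, so on the event $\Omega_T:=\{\max_{s\leq t}|W_s|\leq T-2R\}$ any $a_i$ with $|a_i|>T$ contributes zero, and the supremum over $a$ reduces to the supremum over the compact set $A_T:=\{a\in\overline{B(0,T)}^k:\inf_{i\neq j}|a_i-a_j|\geq 4R\}$. Using the crude bound $|\widetilde F|\leq K_0:=\|\Delta g\|_\infty/(2c)$ (uniform in $a$) together with $\P(\Omega_T^c)\leq C\exp(-(T-2R)^2/(2t))$, the choice $T=\gamma t$ with $\gamma>\sqrt{2K_0}$ makes $\E[\exp(t\widetilde F);\Omega_T^c]$ exponentially small. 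On $\Omega_T$ we discretize $A_T$ by a lattice $\Lambda_\delta\subset A_T$ of mesh $\delta<R$, so that $|\Lambda_\delta|=O((t/\delta)^{kd})$, polynomial in $t$. The joint map $a\mapsto \Delta g_a/g_a$ is Lipschitz into $L^\infty(\R^d)$ with a constant $L$ depending only on $k,R,c$ and the $C^2$-norms of the $u_i$, so for every $a\in A_T$ there is $a^*\in\Lambda_\delta$ with
\begin{equation*}
\int_0^t-\tfrac{\Delta g_a}{2g_a}(W_s)\,ds\leq \int_0^t-\tfrac{\Delta g_{a^*}}{2g_{a^*}}(W_s)\,ds+tL\delta.
\end{equation*}

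\textbf{Step 3 (Union bound and conclusion).} Combining this inequality with Step 1 and a union bound over $\Lambda_\delta$ yields
\begin{equation*}
\E\Bigl[\exp(t\widetilde F)\,\1_{\Omega_T}\Bigr]\leq e^{tL\delta}\cdot|\Lambda_\delta|\cdot C_1\leq C(t/\delta)^{kd}\,e^{tL\delta}.
\end{equation*}
Adding the exponentially small contribution from $\Omega_T^c$, taking $\tfrac1t\log$ and letting $t\to\infty$ produces $\limsup_{t\to\infty}\tfrac1t\log\E[\exp(t\widetilde F)]\leq L\delta$. Since $\delta>0$ was arbitrary, sending $\delta\downarrow 0$ gives the claim. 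The hardest step is Step 2: one must make sure that the polynomial cardinality $(t/\delta)^{kd}$ of the lattice and the Lipschitz error $tL\delta$ can be controlled simultaneously, which is precisely why the order of limits ($t\to\infty$ first, then $\delta\downarrow 0$) matters.
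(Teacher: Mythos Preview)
Your proof is correct and follows essentially the same strategy as the paper's: a uniform-in-$a$ bound for fixed shifts (you use the It\^o/Girsanov identity where the paper invokes Feynman--Kac, but both yield $\E\exp(-\int_0^t\tfrac{\Delta g}{2g}(W_s)\,ds)\le g(0)/c$), followed by confinement of Brownian motion to a large ball (radius $\gamma t$ for you, $t^2$ in the paper), a discretization of the admissible $a$'s using uniform/Lipschitz continuity of $a\mapsto\Delta g_a/g_a$, and a union bound over the resulting polynomially many lattice points. One minor remark: when you take $\Lambda_\delta\subset A_T$ you should note that the Step~1 bound does not require the separation constraint $|a_i-a_j|\ge 4R$, so you may simply discretize all of $\overline{B(0,T)}^k$ and avoid worrying about whether rounding preserves separation.
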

\begin{proof} If it were not for the supremum over $a_1,\ldots, a_k$ this would be a simple consequence of Feynman-Kac formula. In fact, we first show that, 
\begin{equation}\label{FK}
\limsup_{t\to\infty}\frac{1}{t} \log \E\bigg\{\exp\bigg\{\int_0^t  \frac{-\frac{1}{2}\Delta g\big(W(s)\big)}{g\big(W(s)\big)}ds\bigg\}\bigg\}=0
\end{equation}
Indeed, by the Feynman-Kac formula,
 the function
$$
 \Psi(t,x)= \E_x\bigg\{ g(W_t)\exp\bigg\{ \int_0^t \frac {\Delta g(W_s)}{2u(W_s)}\bigg\}\bigg\}
 $$
 satisfies the initial value problem
$$
 \begin{cases}
 \frac{\partial}{\partial t} \Psi= -\frac  12 \Delta \Psi (t,x) + \frac {\Delta g(x)}{2g(x)} \Psi (t,x) \\
  \Psi(0,x)=  g(x).
 \end{cases}
 $$
 However, we clearly see that 
 $$
 \Psi(t,x)= g(x)
 $$
 solves the above heat equation. Furthermore by definition (recall \eqref{gdef}),
 $$
 g(x) \geq c.
 $$   
 Hence, we conclude,
 $$
 \begin{aligned}
 g( x) &= \E_x\bigg\{ g(W_t)\exp\bigg\{ \int_0^t \frac {\Delta g(W_s)}{2g(W_s)}\bigg\}\bigg\}  \\
 &\geq c\, \E_x\bigg\{ \exp\bigg\{ \int_0^t \frac {\Delta g(W_s)}{2g(W_s)}\bigg\}\bigg\} 
 \end{aligned}
 $$
 and therefore,
 \begin{equation}\label{subsexpest1}
 \E_x\bigg\{ \exp\bigg\{ \int_0^t \frac {\Delta g( W_s)}{2g(W_s)}\bigg\}\bigg\} 
 \leq \frac{g(x)}c.
 \end{equation} 
 This proves \eqref{FK}.
To handle the supremum over $(a_1,\dots a_k)$ inside the expectation we have to do  a ``course graining" argument. 

First we note that if the range of the Brownian motion in the time interval $[0,t]$ is $r_t$, once any $|a_i|$ exceeds $r_t+R$ it will no longer affect the value of $g$ (again recall the definition \eqref{gdef}). We can therefore limit each $a_i$ to the ball of radius $r_t+R$. But $P[r_t+R\ge  t^2]\le \exp[-c_1t^3]$ and can be ignored. In other words, we can limit each $a_i$ to the ball of radius $t^2$.

Furthermore, the function $\frac{-\frac{1}{2}\Delta g (x)}{g(x)}$ is a uniformly continuous function of $a_1,\ldots, a_k$ and given any $\eps>0$, there is a $\delta>0$ such that  its oscillation in a box of size
$\delta$ is at most $\eps$. A ball of radius $t^2$ can be covered by $\big(\frac{t^2}{\delta}\big)^{dk}$ such boxes.
There is a set $K\subset (\R^d)^k$ of representatives  of such boxes, a set of cardinality at most  $\big(\frac{t^2}{\delta}\big)^{dk}$  satisfying $|a_i-a_j|\ge 4R$ for all $i\not=j$. 

Using the above two remarks, we can now estimate,
\begin{align*}
\E\bigg\{&\exp\bigg\{\sup_{a_1,\ldots,a_k\atop |a_i-a_j|\ge 4R \forall i\not=j}\int_0^t \frac{-\frac{1}{2}\Delta g(W_s)}{g(W_s)}ds\bigg\}\bigg\}\\
&\le \E\bigg\{\exp\bigg\{\sup_{|a_1|\le t^2,\ldots,|a_k|\le t^2\atop |a_i-a_j|\ge 4R \forall i\not=j}\int_0^t \frac{-\frac{1}{2}\Delta g(W_s)}{g(W_s)}ds\bigg\}\bigg\}+e^{c_2t}P\bigg(\sup_{0\le s\le t} |W_s|\ge t^2\bigg) \\
&\le \E\bigg\{\exp\bigg\{\eps t+\sup_{(a_1,\ldots,a_k)\in K}\int_0^t \frac{-\frac{1}{2}\Delta g(W_s)}{g(W_s)}ds\bigg\}\bigg\}+\exp\big(c_2t-c_1t^3\big)\\
&\le \E\bigg\{\sum_{(a_1,\ldots,a_k)\in K}\exp\bigg\{\eps t+\int_0^t \frac{-\frac{1}{2}\Delta g(W_s)}{g(W_s)}ds\bigg\}\bigg\} +\exp\big(c_2t-c_1t^3\big)\\
&\le   \bigg(\frac{t^2}{\delta}\bigg)^{dk} \sup_{(a_1,\ldots,a_k)\in K}E\bigg\{\exp\bigg\{\eps t+\int_0^t \frac{-\frac{1}{2}\Delta g(W_s)}{g(W_s)}ds\bigg\}\bigg\}+\exp\big(c_2t-c_1t^3\big)
\end{align*}
Taking logarithm, dividing by $t$, passing to $\lim_{t\to\infty}$ and invoking \eqref{FK}, we obtain

$$
\limsup_{t\to\infty}\frac{1}{t}\log E\bigg\{\exp\bigg\{\sup_{a_1,\ldots,a_k\atop |a_i-a_j|\ge 4R \forall i\not=j}\int_0^t \frac{-\frac{1}{2}\Delta g(W_s)}{g(W_s)}ds\bigg\}\bigg\}\le \eps,
$$
and $\eps>0$ is arbitrary. \eqref{subexp} is proved.
\end{proof}

\begin{lemma}\label{L3.3}
Let $({\widetilde\mu}_n)_n$ be sequence in $\widetilde {\mathcal X}$ which converges to $\xi=\{\widetilde{\alpha}_j\}\in\widetilde{\mathcal X}$. For any $k\in \N$, $i=1,\dots,k$ and
 $u_{i,\ell}(x)=u_i(x)\varphi(\frac{x}{R})$, where $u_i\in \mathcal U$,  we have
\begin{equation}\label{Lambdadef}
\begin{aligned}
\liminf_{n\to\infty} \widetilde {F}(u_1,\ldots, u_k, c,R,  {\widetilde \mu}_n)&\ge \sum_{i=1}^k  \int \frac{-(\frac{1}{2}\Delta u_{i,R})(x)}{ c+u_{i,R}(x)} \alpha_i(\d x)\\
&=\widetilde\Lambda (\xi,R,  c,  u_1,\ldots,u_k).
\end{aligned}
\end{equation}
\end{lemma}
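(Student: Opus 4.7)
The plan is to feed the supremum defining $\widetilde F(u_1,\dots,u_k,c,R,\widetilde\mu_n)$ a specific family of shift vectors $(a_1,\dots,a_k)=(a_n^{\ssup 1},\dots,a_n^{\ssup k})$ that come from the decomposition produced in the proof of Theorem \ref{T2.2}, so that the $k$ bumps of $g$ sit exactly on top of the concentration regions of the first $k$ pieces of $\mu_n$. Concretely, I would pass to a subsequence along which $\liminf_n\widetilde F(u_1,\dots,u_k,c,R,\widetilde\mu_n)$ is attained, and then along a further subsequence apply the recursive decomposition in the proof of Theorem \ref{T2.2} through stage $k$ to write $\mu_n=\sum_{j=1}^k\alpha_n^{\ssup j}+\beta_{n,k}$, with shifts $a_n^{\ssup j}\in\R^d$ satisfying $\alpha_n^{\ssup j}\star\delta_{a_n^{\ssup j}}\Rightarrow\alpha_j$, $\inf_{i\ne j}|a_n^{\ssup i}-a_n^{\ssup j}|\to\infty$, and $\beta_{n,k}$ widely separated from every $\alpha_n^{\ssup i}$ (inherited from the construction together with Lemma \ref{lemma2.3}). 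Taking $a_i:=a_n^{\ssup i}$ is admissible in \eqref{Ftilde} for all sufficiently large $n$, since $\inf_{i\ne j}|a_i-a_j|\ge 4R$ holds eventually.

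With this choice, the translated bumps $y\mapsto u_{i,R}(y+a_n^{\ssup i})$ have pairwise disjoint supports contained in the balls $B(-a_n^{\ssup i},2R)$, so inside each such ball $g(x)=c+u_{i,R}(x+a_n^{\ssup i})$ and $\Delta g(x)=(\Delta u_{i,R})(x+a_n^{\ssup i})$, while $\Delta g\equiv 0$ outside their union. Setting $F_i(y):=-\frac{1}{2}\Delta u_{i,R}(y)/(c+u_{i,R}(y))$, a continuous function compactly supported in $B(0,2R)$, the integral inside the supremum in \eqref{Ftilde} decomposes as $\sum_{i=1}^k\int_{\R^d}F_i(x+a_n^{\ssup i})\,\mu_n(\d x)$. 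Expanding $\mu_n=\sum_j\alpha_n^{\ssup j}+\beta_{n,k}$, the \emph{diagonal} piece $\int F_i(x+a_n^{\ssup i})\,\alpha_n^{\ssup i}(\d x)=\int F_i(y)\,(\alpha_n^{\ssup i}\star\delta_{a_n^{\ssup i}})(\d y)$ converges to $\int F_i\,\d\alpha_i$ by weak convergence together with the continuity and compact support of $F_i$.

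It remains to show that the off-diagonal pieces $\int F_i(x+a_n^{\ssup i})\,\alpha_n^{\ssup j}(\d x)$ with $j\ne i$ and the remainder $\int F_i(x+a_n^{\ssup i})\,\beta_{n,k}(\d x)$ both vanish in the limit. For the former, tightness of $\alpha_n^{\ssup j}\star\delta_{a_n^{\ssup j}}$ combined with $|a_n^{\ssup i}-a_n^{\ssup j}|\to\infty$ forces $\alpha_n^{\ssup j}(B(-a_n^{\ssup i},2R))\to 0$. For the latter, weak convergence of $\alpha_n^{\ssup i}\star\delta_{a_n^{\ssup i}}$ to $\alpha_i$ (of mass $p_i>0$; the case $p_i=0$ being trivial) yields an $r_0>0$ with $\alpha_n^{\ssup i}(B(-a_n^{\ssup i},r_0))\ge p_i/2$ eventually, and picking any continuous $V\in\mathcal F_2$ that is strictly positive on $B(0,r_0+2R)$ lets the wide separation $\int V(x-y)\,\alpha_n^{\ssup i}(\d x)\beta_{n,k}(\d y)\to 0$ force $\beta_{n,k}(B(-a_n^{\ssup i},2R))\to 0$. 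Summing, the integral converges to $\sum_{i=1}^k\int F_i\,\d\alpha_i$, which is exactly the right-hand side of \eqref{Lambdadef}; since this shift choice is only one admissible point in the supremum, the same value bounds $\liminf_n\widetilde F$ from below. The main technical obstacle is precisely this last vanishing of $\beta_{n,k}$-mass near each center $-a_n^{\ssup i}$: without the wide separation supplied by Theorem \ref{T2.2}, the remainder could accumulate under a bump and contribute an uncontrolled correction to the limit.
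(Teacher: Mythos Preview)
Your proof is correct and follows essentially the same strategy as the paper: choose the shift vectors $a_i$ in the definition of $\widetilde F$ to be the centers $a_n^{\ssup i}$ coming from the decomposition $\mu_n=\sum_{j=1}^k\alpha_n^{\ssup j}+\beta_{n,k}$ established in Theorem~\ref{T2.2}, observe that the disjoint supports make $\frac{-\frac12\Delta g}{g}$ split as $\sum_i F_i(\cdot+a_n^{\ssup i})$, and then show that the diagonal pieces converge to $\int F_i\,\d\alpha_i$ while the off-diagonal and remainder pieces vanish by wide separation. Your write-up is in fact more careful than the paper's on two points: you make the passage to subsequences (needed to invoke the decomposition of Theorem~\ref{T2.2}) explicit, and you give a concrete argument for why $\beta_{n,k}\big(B(-a_n^{\ssup i},2R)\big)\to 0$, which the paper subsumes under ``it is clear that''.
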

\begin{proof}
If $\widetilde\mu_n\to\xi=\{{\widetilde \alpha}_j\}$, then for $j=1,2,\ldots k$ we can again decompose $\mu_n$ as (recall \eqref{mudecompose}) $\mu_n=\sum_{j=1}^k \alpha_{n,j}+\beta_n$  with $\alpha_{n,j}\ast\delta_{a_{n,j}}\Rightarrow \alpha_j$ for a suitable choice of $a_{n,j}$ that satisfy
$\lim_{n\to\infty}|a_{n,i}-a_{n,j}|=\infty$ for $i\not= j$. If $n$ is large enough $|a_{n,i}-a_{n,j}|\ge 4R$ and
the supports of $\{u_{i,R}\}_i$ are mutually disjoint. In particular with the choice of $a_i=-a_{n,i}$
$$
 \frac{-\frac{1}{2}\Delta g(k, R,c,   a_1,\ldots, a_k, x)}{g(k, R, c,  a_1,\ldots, a_k, x)}=\sum_{i=1}^k\frac{-(\frac{1}{2}\Delta u_{i,R})(x-a_{n,i})}{ c+u_{i,R}(x-a_{n,i})} 
$$
Because $\alpha_{n,j}$ is gets widely separated from $\beta_n$ as well as $\alpha_{n,i}$ for $i\not= j$, it is clear that
$$
\lim_{n\to\infty}\int \frac{-(\frac{1}{2}\Delta u_{i,R})(x-a_{n,i})}{ c+u_{i,R}(x-a_{n,i})} \mu_n(\d x)=\int \frac{-(\frac{1}{2}\Delta u_{i,R})(x)}{ c+u_{i,R}(x)} \alpha_i(\d x),
$$
and the lemma follows.
\end{proof}
\begin{lemma}\label{L3.4}
With $\widetilde\Lambda$ defined in \eqref{Lambdadef} and $\widetilde I$ defined in \eqref{Itilde}, we have the identification 
$$
\widetilde {I}(\xi)=\sup_{R, c>0, k\in \N, \atop  u_1, \ldots, u_k\in\mathcal U} \widetilde\Lambda (\xi, R, c, u_1,\ldots,u_k).
$$
\end{lemma}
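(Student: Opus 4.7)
The plan is to prove the two inequalities $\widetilde I(\xi)\ge\sup\widetilde\Lambda$ and $\widetilde I(\xi)\le\sup\widetilde\Lambda$ separately, in each direction exploiting the sum structure on both sides to work one orbit at a time. The central tool is an integration-by-parts identity: writing $g_i:=c+u_{i,R}$ (so $g_i\ge c>0$ and $g_i-c$ is supported in $B(0,2R)$), and assuming $\alpha_i=f_i\,dx$ with $h_i=\sqrt{f_i}\in H^1(\R^d)$, integration by parts against $h_i^2/g_i$ together with the algebraic identity $2a\cdot b-|b|^2=|a|^2-|a-b|^2$ applied with $a=\nabla h_i$, $b=h_i\nabla g_i/g_i$ gives
\begin{equation}\label{id}
\int\frac{-\tfrac{1}{2}\Delta g_i}{g_i}\,d\alpha_i = I(\alpha_i)-\tfrac{1}{2}\int\Big|\nabla h_i-\frac{h_i\nabla g_i}{g_i}\Big|^2\,dx.
\end{equation}
The direction $\widetilde I(\xi)\ge\sup\widetilde\Lambda$ then follows at once: \eqref{id} gives $\int(-\tfrac12\Delta g_i/g_i)\,d\alpha_i\le I(\alpha_i)$ (trivial if $I(\alpha_i)=\infty$), and summing over $i=1,\dots,k$ together with $\sum_{i=1}^k I(\alpha_i)\le\widetilde I(\xi)$ yields $\widetilde\Lambda\le\widetilde I(\xi)$.

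For the harder direction $\widetilde I(\xi)\le\sup\widetilde\Lambda$, I use \eqref{id} in reverse: given $\eps>0$, I construct $k,R,c,u_1,\dots,u_k$ so that the error term $\tfrac12\int|\nabla h_i-h_i\nabla g_i/g_i|^2\,dx$ is small for each selected orbit. Assuming first $\widetilde I(\xi)<\infty$, I pick $k$ with $\sum_{j>k}I(\alpha_j)<\eps/2$; for each $i\le k$, I approximate $h_i$ in $H^1$-norm by a smooth nonnegative compactly supported $h_i^*$ (density of $C_c^\infty$ in $H^1$), set $u_i:=\delta+h_i^*\in\mathcal U$ with $\delta>0$ small, choose $R$ larger than the diameters of all the supports so that $u_{i,R}=u_i$ on $\supp(h_i^*)$, and take the outer constant $c>0$ small. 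If $\widetilde I(\xi)=\infty$, either the series $\sum_j I(\alpha_j)$ diverges---in which case the above finite-$I$ construction applied to partial sums yields $\widetilde\Lambda$ as large as desired---or some $I(\alpha_j)=\infty$, in which case standard sharply-peaked test functions $u_j$ localized near a mass point or singularity of $\alpha_j$ directly render $\int(-\tfrac12\Delta g_j/g_j)\,d\alpha_j$ arbitrarily large.

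The main obstacle is controlling the error term uniformly, since a naive pointwise bound blows up where $g_i$ is near its floor $c+\delta$ while $h_i$ is not small. I would handle this by an iterated passage to the limit. First fix $c,\delta,R$ and send $h_i^*\to h_i$ in $H^1$; on the ball $B(0,R)$ where $\varphi(\,\cdot\,/R)\equiv 1$, the integrand converges in $L^1$ to $|\nabla h_i|^2(c+\delta)^2/(c+\delta+h_i)^2$. Then send $c+\delta\downarrow 0$: dominated convergence reduces this piece to $\int_{\{h_i=0\}}|\nabla h_i|^2\,dx$, which vanishes by Stampacchia's theorem ($\nabla h_i=0$ a.e.\ on $\{h_i=0\}$). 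The far-field contribution $\tfrac12\int_{B(0,2R)^c}|\nabla h_i|^2\,dx$ is made arbitrarily small by taking $R\to\infty$, since $\nabla h_i\in L^2$; and on the transition annulus $B(0,2R)\setminus B(0,R)$, keeping $\supp(h_i^*)$ well inside $B(0,R)$ ensures that $g_i=c+\delta\varphi(\,\cdot\,/R)$ varies only slowly there, rendering its contribution negligible as well. Summing the resulting orbit-wise bounds gives $\widetilde\Lambda\ge\widetilde I(\xi)-\eps$, completing the proof.
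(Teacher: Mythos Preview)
Your argument is correct in substance, but it takes a much longer route than the paper. The paper's proof is three lines: it simply invokes the classical Donsker--Varadhan variational representation
\[
I(\alpha)=\sup_{u\in\mathcal U,\,c>0}\int\frac{-\tfrac12\Delta u}{c+u}\,\d\alpha
\]
for a single sub-probability measure as a known fact, observes that letting $R\to\infty$ in $\widetilde\Lambda$ reduces each summand to exactly this form, and then sums over $i=1,\dots,k$ and takes $\sup_k$. What you have done is to re-prove this single-orbit variational formula from scratch: your identity \eqref{id} and the subsequent approximation argument (choosing $g_i\approx c+h_i$, sending $c\downarrow 0$, invoking Stampacchia on $\{h_i=0\}$, etc.) is precisely the standard derivation of the DV formula. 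So both proofs are correct; the paper's is short because it recognizes the lemma as a term-by-term application of a textbook identity, while yours is self-contained but expends effort re-establishing that identity. If you want to keep your approach, you could compress it dramatically by citing the DV representation directly and then noting only the additional step needed here, namely that the shared parameters $c$ and $R$ across the $k$ summands do not obstruct decoupling the suprema (sending $R\to\infty$ kills the cutoff, and different constants $c_i$ in the $u_i$'s absorb any would-be coupling through the common outer $c$).
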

\begin{proof}
Recall the definition of the classical rate function $I$ from \eqref{Idef}. For any $\alpha\in{\mathcal M}_{\le 1}(\R^d)$, we can also identify $I$ as
$$
I(\alpha)=\sup_{u\in \mathcal {U}\atop c>0}\int \frac{-\frac{1}{2}\Delta u (x)}{c+u(x)}\alpha(\d x).
$$
Therefore for every $k\in \N$,
$$
\sup_{c >0,R>0\atop u_1, \ldots, u_k\in\mathcal U} \widetilde\Lambda (\xi, c, R, u_1,\ldots,u_k)=\sum_{i=1}^k I(\alpha_j)
$$
and 
$$
\sup_{k\in \N} \sum_{j=1}^k I(\alpha_j)=\sum_{j=1}^\infty I(\alpha_j)=\widetilde{I}(\xi).
$$
\end{proof}
Now we come to the proof of the large deviation upper bound for $\mathbb Q_t$ in $\widetilde{\mathcal X}$.

{\bf{Proof of Proposition \ref{XLDPub}}:} $\widetilde{\mathcal X}$ being compact, for \eqref{thmub}, it is enough to prove (by the usual machinery of covering a compact space by finitely many balls and invoking the union of events bound) that if $\xi\in\widetilde{\mathcal X}$ and $N_\delta$ is a ball (as usual, in the metric $\mathbf D$) of radius $\delta$ around $\xi$, then 
\begin{equation}\label{localub}
\limsup_{\delta\to 0}\limsup_{t\to\infty}\frac{1}{t}\log \mathbb Q_t(N_\delta)\le -\widetilde{I}(\xi).
\end{equation}
Let ${\mathcal H}$ be the space of  maps $ H: \widetilde{\mathcal M}_1(\R^d)\to\R$ with the following properties: For each $ H$ there is a corresponding function $\Lambda_{H}: \mathcal {\widetilde X}\to \R$ such that 
\begin{equation}\label{limitH}
\liminf_{ {\widetilde \mu}\in \widetilde {\mathcal  M}_1(\R^d)\atop {\widetilde\mu}\to\xi\in{\widetilde{\mathcal X}}}H({\widetilde \mu})\ge \Lambda_{ H}(\xi)
\end{equation}
and
\begin{equation}\label{Hsubexp}
\limsup_{t\to\infty}\frac{1}{t}\log \E^{\mathbb Q_t}\big\{\exp\{ t H(\cdot)\}\big\}\leq 0.
\end{equation}
Then again the properties of the decomposition \eqref{mudecompose}, 
a routine application of Tchebycheff's inequality, \eqref{limitH} and \eqref{Hsubexp} show that, for any $ H\in{\mathcal{H}}$,
$$
\limsup_{\delta\to 0}\limsup_{t\to\infty}\frac{1}{t}\log \mathbb Q_t[N_\delta]\le -\Lambda_{ H}(\xi).
$$
It is therefore enough to identify $\widetilde{I}(\xi)$ as
\begin{equation}\label{ILambda}
\widetilde I(\xi)=\sup_{H\in {\mathcal H}} \Lambda_{H}(\xi).
\end{equation}
Recall the definition of $\widetilde F$ from \eqref{Ftilde}. Then, for $\mathcal H$, by Lemma \ref{L3.2}, Lemma \ref{L3.3} and Lemma \ref{L3.4}, we can take the collection $\{\widetilde{F}(u_1,\ldots, u_k, c,  R, \widetilde{\mu})\}$ with $k\in \N, R, c>0, u_1,\ldots,u_k\in\mathcal {U}$ and $\mu\in \Mcal_1(\R^d)$ and set $\Lambda_{\widetilde{F}}=\widetilde\Lambda$, with $\widetilde\Lambda$ defined in \eqref{Lambdadef}. This proves \eqref{ILambda} and hence Proposition \ref{XLDPub}.
\qed

\section{Application: Localization of path measures with Coulomb interaction}\label{application}

In this section we come back to the problem we introduced in Section \ref{Introduction}. 
Again we consider the Wiener measure $\P$ on $\Omega= C_0([0,\infty); \R^3)$ corresponding to a three dimensional Brownian motion $W=(W_t)_{t\geq 0}$ starting at the origin. Consider the transformed measure
$$
\d{\widehat \P}_t=\frac{1}{Z_t}\exp\bigg\{\frac{1}{t}\int_0^t\int_0^t \frac{1}{|W_s-W_\sigma|} \d s \d\sigma\bigg\} \d \P
$$
where 
$$
 Z_t=\E\bigg[ \exp\bigg\{\frac{1}{t}\int_0^t\int_0^t \frac{1}{|W_s-W_\sigma|} \d s \d\sigma\bigg\}\bigg]
$$
is the normalizing constant or the {\it{partition function}}. As mentioned before (see \cite{DVP}),
\begin{equation}\label{varfor}
\lim_{t\to\infty}\frac 1t\log Z_t
=\sup_{\heap{\psi\in H^1(\R^3)}{\|\psi\|_2=1}} 
\Bigg\{\int_{\R^d}\int_{\R^d}\d x\d y\,\frac{ {\psi^2(x) \psi^2(y)}}{|x-y|}-\frac 12\big\|\nabla \psi\big\|_2^2\Bigg\},
\end{equation}
 and according to the classical result of Lieb (see \cite{L}), this variational problem
admits a maximizer $\psi_0$ which is radially symmetric and is unique up to translations. Let $d\mu_0=\psi_0^2(x) \d x$ define its probability distribution with $\widetilde{\mu}_0$ the corresponding orbit 
in $\widetilde {\mathcal M}_1(\R^3)$.  We study the distribution 
$${\widehat {\mathbb Q}}_t=\widehat {\P}_t \widetilde L_t^{-1}
$$ 
on $\widetilde {\mathcal M}_1(\R^3)$ of the orbit ${\widetilde L}_t$ of the normalized 
occupation measures $L_t$  of the  trajectory $\{W_s:0\le s\le t\}$ under the transformed measure $\widehat {\P}_t$.
Here is our next main result.
\begin{theorem}[The tube property under Coulomb interaction]\label{T4.1} 
As probability mesures on $\widetilde {\mathcal M}_1(\R^3)$,
$$
\lim_{t\to\infty} \widehat {\mathbb Q}_t=\delta_{\widetilde{\mu}_0}
$$
under the weak topology.
\end{theorem}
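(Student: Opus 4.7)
My plan is to lift the problem to the compactification $\widetilde{\mathcal X}$ of Theorem~\ref{T2.2}, apply the LDP of Theorem~\ref{ThmLDP} through a Varadhan-type argument, and identify the maximizer set of $\widetilde H-\widetilde I$ as $\{\widetilde\mu_0\}$ using Lieb's uniqueness. Since $V(x)=1/|x|$ is not in $\mathcal F_2$ (it is singular at the origin), some preparation is needed before the continuity result of Corollary~\ref{continuity} can be invoked.

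First I would approximate $V$ by the continuous truncation $V_M(x)=\min\{1/|x|,M\}$, which does lie in $\mathcal F_2$; the associated bilinear form $H_M$ extends to a bounded continuous functional $\widetilde H_M$ on $\widetilde{\mathcal X}$ by Corollary~\ref{continuity}. To handle the non-negative singular remainder $R_M:=H-H_M$, supported on $\{|x-y|<1/M\}$, I would invoke a standard three-dimensional Feynman--Kac/self-intersection estimate giving
$$
\lim_{M\to\infty}\limsup_{t\to\infty}\frac{1}{t}\log \E\bigl[e^{tc R_M(L_t)}\bigr]=0\qquad\text{for every } c>0,
$$
so that a H\"older decomposition allows one to replace $\widetilde H_M$ by $\widetilde H$ in all subsequent exponential asymptotics.

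Combining Theorem~\ref{ThmLDP} with Varadhan's lemma applied to $\widetilde H_M$ (then taking $M\to\infty$) yields
$$
\lim_{t\to\infty}\frac{1}{t}\log Z_t=\rho^\ast:=\sup_{\xi\in\widetilde{\mathcal X}}\big[\widetilde H(\xi)-\widetilde I(\xi)\big]
$$
together with the corresponding upper bound $\limsup_{t\to\infty}\frac{1}{t}\log \E[\mathbf 1_{\widetilde L_t\in F}\,e^{tH(L_t)}]\le \sup_{\xi\in F}[\widetilde H(\xi)-\widetilde I(\xi)]$ for closed $F\subset\widetilde{\mathcal X}$. To identify $\rho^\ast$ and its optimizers, I would use the classical $L^2$-scaling $\psi\mapsto\lambda^{3/2}\psi(\lambda\,\cdot)$: for any sub-probability $\alpha=\psi^2\,dx$ of mass $p\in(0,1]$ an elementary optimization gives
$$
\sup_{\alpha(\R^3)=p}\big[H(\alpha)-I(\alpha)\big]=p^3\rho,
$$
where $\rho$ is the Lieb--Pekar constant of \eqref{varfor}. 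Summing over $\xi=\{\widetilde\alpha_j\}$ then yields
$$
\widetilde H(\xi)-\widetilde I(\xi)\;\le\;\rho\sum_j p_j^3\;\le\;\rho\Bigl(\sum_j p_j\Bigr)^{3}\;\le\;\rho,
$$
with equality only when a single orbit carries the full mass and realizes the sup in \eqref{varfor}; by Lieb's uniqueness \eqref{shiftunique} that orbit is $\widetilde\mu_0$, so $\rho^\ast=\rho$ and the maximizer set is exactly $\{\widetilde\mu_0\}$.

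For the tube property, given a weak neighborhood $U$ of $\widetilde\mu_0$ in $\widetilde{\mathcal M}_1(\R^3)$, I would choose an open neighborhood $\widetilde U$ of $\widetilde\mu_0$ in $\widetilde{\mathcal X}$ with $\widetilde U\cap \widetilde{\mathcal M}_1(\R^3)\subseteq U$; this is possible because, as shown inside the proof of Theorem~\ref{T2.2}, convergence in $\widetilde{\mathcal X}$ of a mass-one sequence to a mass-one limit coincides with convergence in the weak quotient topology. Then $F:=\widetilde{\mathcal X}\setminus\widetilde U$ is compact and disjoint from the maximizer; upper semicontinuity of $\widetilde H-\widetilde I$ on $F$ (continuity of $\widetilde H$ by Corollary~\ref{continuity}, lower semicontinuity of $\widetilde I$) together with the previous step gives $\sup_F[\widetilde H-\widetilde I]<\rho^\ast$, and the Varadhan upper bound turns this into $\widehat{\mathbb Q}_t(\widetilde{\mathcal M}_1\setminus U)\le \widehat{\mathbb Q}_t(F)\to 0$ exponentially, which is the stated convergence. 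The main obstacle is the strict inequality $\sup_F[\widetilde H-\widetilde I]<\rho^\ast$: ruling out near-maximizing mass escaping to infinity or splitting into several widely separated clumps is precisely what the compactification was designed for, and the scaling identity $g(p)=p^3\rho$ combined with Lieb's uniqueness is what closes the argument.
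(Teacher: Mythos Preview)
Your approach is essentially the paper's: truncate the Coulomb singularity, control the remainder by a super-exponential estimate, apply Varadhan's lemma with the LDP of Theorem~\ref{ThmLDP}, and then use the cubic scaling $\rho(m)=m^3\rho$ plus Lieb's uniqueness to isolate $\widetilde\mu_0$ as the unique maximizer. The paper chooses the smoothing $V_\eps(x)=(\eps^2+|x|^2)^{-1/2}$ rather than your cutoff $\min\{1/|x|,M\}$, and it proves the super-exponential bound in detail (Lemma~\ref{L4.2}, via the estimate $Y_\eps(x)\le C\sqrt{\eps}\,|x|^{-3/2}$ and Portenko's lemma) rather than calling it standard, but these are cosmetic differences.

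One inconsistency is worth flagging. In your last paragraph you obtain the strict gap $\sup_F[\widetilde H-\widetilde I]<\rho^\ast$ by appealing to ``continuity of $\widetilde H$ by Corollary~\ref{continuity}''. But you correctly observed at the outset that $1/|x|\notin\mathcal F_2$, so Corollary~\ref{continuity} does not apply to the untruncated $\widetilde H$; on $\widetilde{\mathcal X}$ the Coulomb functional is only lower semicontinuous (as the increasing limit of the continuous $\widetilde H_M$), which is the wrong direction for upper semicontinuity of $\widetilde H-\widetilde I$. The paper does not linger on this point either, simply recording that the supremum is uniquely attained at $\widetilde\mu_0$; the strict inequality on a closed $F\not\ni\widetilde\mu_0$ can be recovered by noting that Hardy--Littlewood--Sobolev gives $H(\alpha)\le C\,\alpha(\R^3)^{3/2}I(\alpha)^{1/2}$, so any near-maximizing sequence has bounded $\widetilde I$ and one can pass to the limit along sublevel sets of $\widetilde I$. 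Just do not attribute this to Corollary~\ref{continuity}.
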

\begin{remark}
Note that the topology on $\widetilde {\mathcal M}_1(\R^3)$ is the same as weak convergence. As we shall see, the compactification $\widetilde{\mathcal X}$ of $\widetilde {\mathcal M}_1(\R^3)$ plays a role only in the proof of this theorem and not in its statement. 
\end{remark}
The proof involves  the standard large deviation route. The function $\frac{1}{|x|}$ is unbounded and needs to be truncated to fit within  the standard large deviation theory. We write 
\begin{equation}\label{coulomb}
\frac{1}{|x|}= V_\eps(x)+Y_\eps(x)
\end{equation} 
with $V_\eps (x)=(\eps^2+|x|^2)^{-\frac{1}{2}}$. The difference 
is given by 
$$
\begin{aligned}Y_\eps(x)=\frac{1}{|x|}-\frac{1}{\sqrt {\eps^2+|x|^2}}
&=\frac{\sqrt {\eps^2+|x|^2}-|x|}{|x|\sqrt{\eps^2+|x|^2}}\\
&=\frac{\eps^2}{|x|+\sqrt {\eps^2+|x|^2}}\,\,\frac{1}{\sqrt {\eps^2+|x|^2}}\,\,\frac{1}{|x|}\\
&=\eps^{-1}\varphi\big(\frac{x}{\eps}\big),
\end{aligned}$$
with
$$
\phi(x)=\frac{1}{|x|}\,\,\frac 1{\sqrt{1+|x|^2}}\,\, \frac 1{\big(|x|+\sqrt{1+|x|^2\big)}}$$
We need the following lemma to control the difference.
\begin{lemma}\label{L4.2}
For any $\lambda>0$,
\begin{equation}\label{superexp}
\limsup_{\eps\to 0}\limsup_{t\to\infty}\frac{1}{t}\log \E\bigg[ \exp \frac{\lambda}{t}\int_0^t\int_0^t Y_\eps(W_s-W_\sigma)\d s\d\sigma\bigg]=0.
\end{equation}
\end{lemma}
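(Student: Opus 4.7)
The plan is to combine three ingredients: a Jensen-type bound that reduces the double integral to a supremum of single-path integrals, Khasminski's lemma applied to the shifted potential $Y_\eps(\cdot-y)$, and a coarse-graining argument in the spirit of Lemma~\ref{L3.2}. The central analytic input is that although $Y_\eps$ is still singular at the origin, its Newtonian mass $\int_{\R^3}Y_\eps(z)/|z|\,\d z$ is of order $\eps$, so the Kato norm of $\lambda Y_\eps$ becomes strictly less than one once $\eps$ is small.

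First, since $L_t$ is a probability measure and $Y_\eps\ge 0$, Fubini gives
\[
\frac{1}{t}\int_0^t\!\!\int_0^t Y_\eps(W_s-W_\sigma)\,\d s\,\d\sigma
= t\iint Y_\eps(x-y)L_t(\d x)L_t(\d y)
\le \sup_{y\in\R^3}\int_0^t Y_\eps(W_s-y)\,\d s,
\]
so it suffices to bound $\E\bigl[\exp\bigl(\lambda\sup_y\int_0^t Y_\eps(W_s-y)\,\d s\bigr)\bigr]$. Second, for each fixed $y$ I would apply Khasminski's lemma to $V_y:=\lambda Y_\eps(\cdot-y)$. Using the $3$D Green's function $G(x,z)=(2\pi|x-z|)^{-1}$ of $-\tfrac12\Delta$ and the radial monotonicity of $Y_\eps$ (Riesz rearrangement), one has
\[
\sup_{x,y}\E_x\!\Bigl[\int_0^\infty Y_\eps(W_s-y)\,\d s\Bigr]
=\frac{1}{2\pi}\int_{\R^3}\frac{Y_\eps(z)}{|z|}\,\d z,
\]
and a short computation using the substitutions $r=\eps u$ and $u=\sinh v$ shows the latter equals $2\eps$. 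Hence for $\eps$ with $2\lambda\eps<1$, Khasminski's lemma yields
\[
\sup_{y\in\R^3,\,x\in\R^3}\E_x\!\left[\exp\!\left(\lambda\!\int_0^t Y_\eps(W_s-y)\,\d s\right)\right]\le\frac{1}{1-2\lambda\eps}
\]
uniformly in $t$.

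Third, I would pass from fixed $y$ to $\sup_y$ by coarse-graining exactly as in the proof of Lemma~\ref{L3.2}. Outside an event of probability $\le e^{-ct^3}$ the Brownian motion stays in $B(0,t^2)$, so only $y\in B(0,t^2+1)$ contribute; covering this ball by a grid of polynomial cardinality and applying a union bound introduces only a factor $t^{O(1)}$, invisible under $\tfrac1t\log$. Combined with the uniform Khasminski bound above, this yields
\[
\limsup_{t\to\infty}\frac{1}{t}\log\E\!\left[\exp\!\left(\frac{\lambda}{t}\!\int_0^t\!\!\int_0^t Y_\eps(W_s-W_\sigma)\,\d s\,\d\sigma\right)\right]\le 0
\]
for every $\eps$ small (depending on $\lambda$); the reverse inequality $\ge 0$ is immediate from $Y_\eps\ge 0$ and Jensen, so sending $\eps\to 0$ gives \eqref{superexp}.

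The hardest step will be the coarse-graining: unlike in Lemma~\ref{L3.2}, where the integrand is bounded and uniformly continuous in the parameters, here $y\mapsto\int_0^tY_\eps(W_s-y)\,\d s$ is only continuous away from the Brownian range, because $Y_\eps$ retains a mild singularity at the origin. I would resolve this by splitting $Y_\eps=Y_\eps^{\mathrm{reg}}+Y_\eps^{\mathrm{sing}}$, where $Y_\eps^{\mathrm{reg}}$ is bounded and uniformly continuous (handled by standard discretization) and $Y_\eps^{\mathrm{sing}}$ is supported in a small neighbourhood of the origin with Kato norm tending to zero as the neighbourhood shrinks; the singular part is then absorbed by a second application of Khasminski on each cell.
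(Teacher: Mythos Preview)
Your first two steps are correct: the pointwise bound $\frac{1}{t}\int_0^t\!\int_0^t Y_\eps(W_s-W_\sigma)\,\d s\,\d\sigma \le \sup_{y}\int_0^t Y_\eps(W_s-y)\,\d s$ and the Khasminskii bound for fixed $y$ (your Kato-norm computation $\frac{1}{2\pi}\int Y_\eps(z)/|z|\,\d z=2\eps$ is right). The real difficulty is exactly where you locate it, but your proposed fix does not close the gap. After splitting $Y_\eps = Y_\eps^{\mathrm{reg}}+Y_\eps^{\mathrm{sing}}$ and applying H\"older to separate the two contributions, the \emph{singular} piece still sits under $\sup_y$ inside the exponential, so you are back to the same problem with a smaller but still singular potential. ``Absorbing it by a second Khasminskii on each cell'' does not work as stated, because Khasminskii controls $\E[\exp(\int_0^t V(W_s-y)\,\d s)]$ for fixed $y$, not the oscillation $\sup_{|y-y'|\le\delta}\bigl|\int_0^t V(W_s-y)\,\d s-\int_0^t V(W_s-y')\,\d s\bigr|$, which is what the discretization error actually is. This quantity is not small when $W_s$ passes within $\delta$ of the grid point, and no amount of shrinking the support of $Y_\eps^{\mathrm{sing}}$ helps, because the singularity is always $1/|z|$ near $0$.

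The paper avoids the $\sup_y$ entirely by a time-ordering trick that you should adopt instead of step~3. Write the double integral as $2\int_0^t\bigl[\int_s^t Y_\eps(W_s-W_\sigma)\,\d\sigma\bigr]\d s$ and apply Jensen in the outer $s$-variable (not in the spatial variable) to get
\[
\exp\!\Bigl(\tfrac{2\lambda}{t}\int_0^t\!\bigl[\cdots\bigr]\d s\Bigr)
\le \tfrac{1}{t}\int_0^t \exp\!\Bigl(2\lambda\!\int_s^t Y_\eps(W_s-W_\sigma)\,\d\sigma\Bigr)\d s.
\]
Now for each fixed $s$ the increments $(W_\sigma-W_s)_{\sigma\ge s}$ form a Brownian motion from $0$ independent of $\mathcal F_s$, so after taking expectation the inner exponential has the law of $\exp\bigl(2\lambda\int_0^{t-s}Y_\eps(W_u)\,\d u\bigr)$ with $W$ started at the origin. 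This is a single Khasminskii estimate (the paper uses the pointwise bound $Y_\eps(x)\le C\sqrt{\eps}\,|x|^{-3/2}$ and Portenko's lemma, but your direct Kato-norm argument works just as well here). No coarse-graining, no $\sup_y$, no singular/regular split is needed.
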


\begin{proof}
One can bound  $\phi(x)$ which behaves like $\frac{1}{|x|}$ near $0$ and like $\frac{1}{|x|^3}$ near $\infty$ by $\frac{C}{|x|^\frac{3}{2}}$. In particular 
$$
Y_\eps(x)\le \frac{C\sqrt{\eps}}{|x|^\frac{3}{2}}.
$$
Then by time ordering and Jensen's inequality,
$$
\begin{aligned}
&\exp \bigg\{\frac{\lambda}{t}\int_0^t\int_0^t Y_\eps(W_s-W_\sigma)\d s\d\sigma\bigg\}\\
&=\exp \bigg\{\frac{2\lambda}{t}\int_0^t\bigg\{\int_s^t Y_\eps(W_s-W_\sigma)\d\sigma\bigg\}\d s\bigg\}\\
&\le \frac{1}{t}\int_0^t   \exp\bigg\{2\lambda \int_s^T Y_\eps(W_s-W_\sigma)\d\sigma\bigg\} \d s\\
&\le  \frac{1}{t}\int_0^t   \exp\bigg\{2C \lambda\sqrt{\eps} \int_s^t \frac{1}{|W_s-W_\sigma|^\frac{3}{2}}\d \sigma\bigg\} \d s \\
&=^{\ssup{\mathcal D}} \frac 1t\int_0^t \d s\,\exp\bigg\{2C\lambda\sqrt \eps \int_0^{t-s} \frac 1{|W_\sigma|^{3/2}} \d \sigma\bigg\}
\end{aligned}
$$
If we can show that, for $\eps>0$ small enough, 
\begin{equation}\label{check1}
\sup_{x\in\R^3} \E^{\ssup x} \bigg\{\exp\bigg\{2C\lambda\sqrt\eps\int_0^1 \frac{1}{|W_\sigma|^{3/2}} \d \sigma\bigg\}\bigg\}\leq \alpha<\infty,
\end{equation}
then it follows, by successive conditioning and the Markov property,
\begin{equation}\label{check2}
 \E \bigg\{\exp\bigg\{2C\lambda\sqrt\eps\int_0^{t-s} \frac{1}{|W_\sigma|^{3/2}} \d \sigma\bigg\}\bigg\}\leq \alpha^{t-s}.
 \end{equation}
This will prove \eqref{superexp}.

It remains to check \eqref{check1}. For this, we appeal to Portenko's lemma (see \cite{P}), which states that, if for a Markov process $\{\P^{\ssup x}\}$ and for a function $\widetilde V\ge 0$
$$
\sup_{x\in \R^d} \E^{\ssup x}\bigg\{\int_0^1 \widetilde V(W_s)\d s\bigg\}\le \eta<1
$$
then
$$
\sup_{x\in \R^d} \E^{\ssup x}\bigg\{\exp\bigg\{\int_0^1 \widetilde V(W_s)\d s\bigg\}\bigg\}\le \frac{\eta}{1-\eta}=\alpha <\infty.
$$
Hence, to prove \eqref{check1}, we need to verify that 
\begin{equation}\label{check3}
\begin{aligned}
\sup_{x\in \R^3} \E^{\ssup x}\bigg\{\int_0^1 \frac{\d \sigma}{|W_\sigma|^\frac{3}{2}}\bigg\}
=\sup_{x\in \R^3} \d y\int_0^1 \d \sigma\int_{\R^3}\frac{1}{|y|^\frac{3}{2}} \frac{1}{(2\pi \sigma)^\frac{3}{2} }\exp\bigg\{-\frac{(y-x)^2}{2\sigma}\bigg\}
<\infty.
\end{aligned}
\end{equation}
One can see that  
$$
\sup_{x\in \R^3} \int_{\R^3}\d y\frac{1}{|y|^\frac{3}{2}} \frac{1}{(2\pi \sigma)^\frac{3}{2} }\exp\bigg\{-\frac{(y-x)^2}{2\sigma}\bigg\}
$$ 
is attained at $x=0$ because we  can rewrite the integral by Parseval's identity as 
$$ c\int_{\R^3} \exp\bigg\{-\frac{\sigma|\xi|^2}{2}+i\langle x,\xi\rangle\bigg\}\frac{1}{|\xi|^\frac{3}{2}}d\xi,
$$ 
where $c>0$ is a constant. When  $x=0$ the integral reduces to $\int_0^1 \sigma^{-3/4} \,\,\d \sigma$ which is finite.
\end{proof}
We continue with the proof of the Theorem \ref{T4.1}. First we prove
a large deviation estimate for  $\widehat {\mathbb Q}_t$.
\begin{theorem}
For any closed set $F\subset \widetilde{\mathcal X}$
\begin{equation*}
\limsup_{t\to\infty}\frac{1}{t}\log \widehat {\mathbb Q}_t[F]\le -\inf_{\xi\in F} \widetilde {J}(\xi),
\end{equation*}
and for any open set $G\subset\widetilde{\mathcal X}$
\begin{equation*}
\liminf_{t\to\infty}\frac{1}{t}\log \widehat {\mathbb Q}_t[G]\ge -\inf_{\xi\in G} \widetilde {J}(\xi),
\end{equation*}
where, for $\xi=\{\widetilde{\alpha}_j\}\in \widetilde{\mathcal X}$,
$$
{\widetilde J}(\xi)= \widetilde\rho-\sum_j \bigg\{\int \frac{1}{|x-y|} \alpha_j(\d x)\alpha_j(\d y)-{\widetilde I}(\widetilde{\alpha}_j)\bigg\}
$$
and  $\widetilde\rho$ is given by
$$
\widetilde\rho=\sup_{\xi\in\widetilde{\mathcal X}} \sum_j \bigg\{\int_{\R^3}\int_{\R^3} \frac{\psi_j^2(x)\psi_j^2(y)}{|x-y|} \d x\d y-\frac{1}{2}\sum_j\big\|\nabla \psi_j\big\|_2^2\bigg\}
$$ 
 and $\alpha_j(\d x)=\psi^2_j(x)\d x$ with $\sum_j \int_{\R^3} \psi_j^2(x) \d x\leq 1$.
\end{theorem}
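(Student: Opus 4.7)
The plan is to apply a Varadhan-type tilting argument to the LDP of Theorem \ref{ThmLDP} for $\widetilde L_t$ on $\widetilde{\mathcal X}$, using the Coulomb functional $H(\mu) = \iint |x-y|^{-1}\mu(\d x)\mu(\d y)$ as the tilt. Since $H$ fails to be continuous or even bounded on $\widetilde{\mathcal X}$, the smooth truncation $|x|^{-1} = V_\eps(x) + Y_\eps(x)$ from \eqref{coulomb} is essential. As $V_\eps(x_1-x_2) \in \mathcal F_2$, Corollary \ref{continuity} shows that
$$
\widetilde H_\eps(\xi) \;:=\; \sum_j\iint V_\eps(x-y)\,\alpha_j(\d x)\alpha_j(\d y)
$$
is continuous and bounded (by $\eps^{-1}$) on the compact space $\widetilde{\mathcal X}$, with $\widetilde H_\eps\uparrow\widetilde H$ monotonically as $\eps\downarrow 0$, where $\widetilde H(\xi) := \sum_j\iint|x-y|^{-1}\alpha_j\alpha_j$. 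The singular residue $K_\eps(L_t) = \iint Y_\eps(x-y)\,L_t(\d x)L_t(\d y)$ is controlled by Lemma \ref{L4.2}, which provides superexponential negligibility in the iterated limit $\eps\downarrow 0$ after $t\to\infty$.

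First I would establish $t^{-1}\log Z_t \to \widetilde\rho$. The lower bound follows from $H \ge H_\eps$ and Varadhan's lemma (applied to the bounded continuous $\widetilde H_\eps$ on compact $\widetilde{\mathcal X}$): $\liminf t^{-1}\log Z_t \ge \sup_\xi[\widetilde H_\eps - \widetilde I]$, then $\eps\downarrow 0$ and monotone convergence of suprema. For the upper bound I would use H\"older with conjugates $p,q>1$,
$$
\E\bigl[e^{tH(L_t)}\bigr] \;\le\; \E\bigl[e^{ptH_\eps(L_t)}\bigr]^{1/p}\,\E\bigl[e^{qtK_\eps(L_t)}\bigr]^{1/q},
$$
treating the first factor by Varadhan (the exponent $p\widetilde H_\eps$ is bounded continuous) and the second by Lemma \ref{L4.2}, then sending $\eps\downarrow 0$ followed by $p\downarrow 1$. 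The $p\downarrow 1$ step relies on the scale homogeneity $\phi \mapsto \lambda^{3/2}\phi(\lambda\,\cdot)$ of the Hartree and gradient functionals (under which $\widetilde H$ scales by $\lambda$ and $\widetilde I$ by $\lambda^2$), which yields $\sup_\xi[p\widetilde H - \widetilde I] = p^2\widetilde\rho$ and hence $p^{-1}\sup_\xi[p\widetilde H - \widetilde I] = p\widetilde\rho\to\widetilde\rho$.

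With the asymptotics of $Z_t$ in hand, the LDP upper bound for closed $F\subset\widetilde{\mathcal X}$ comes from the same H\"older decomposition inside $\E[\1_{\widetilde L_t\in F}e^{tH(L_t)}]$ combined with Varadhan's upper bound,
$$
\limsup_{t\to\infty}\tfrac 1t\log \E\bigl[\1_{\widetilde L_t\in F}\,e^{ptH_\eps(L_t)}\bigr] \;\le\; -\inf_{\xi\in F}\bigl[\widetilde I(\xi) - p\widetilde H_\eps(\xi)\bigr],
$$
followed by $\eps\downarrow 0$ (monotone convergence of infima, as $\widetilde I - p\widetilde H_\eps \downarrow \widetilde I - p\widetilde H$ on $F$) and $p\downarrow 1$; after subtracting the normalization $\widetilde\rho$ this identifies the bound as $-\inf_F\widetilde J$. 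The matching lower bound for open $G$ is simpler: $H\ge H_\eps$, Varadhan's lower bound, and monotone convergence of suprema yield $\liminf t^{-1}\log\widehat{\mathbb Q}_t(G) \ge \sup_G[\widetilde H - \widetilde I] - \widetilde\rho = -\inf_G\widetilde J$. The principal obstacle throughout is the lack of continuity of $\widetilde H$ on $\widetilde{\mathcal X}$, which rules out a direct Varadhan lemma; this is overcome by the truncation scheme, and relies crucially on Lemma \ref{L4.2} (via Portenko's bound) to tame the Coulomb singularity on the diagonal, together with a careful bookkeeping of the three limits $\eps\downarrow 0$, $p\downarrow 1$ and $t\to\infty$.
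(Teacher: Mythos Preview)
Your proposal is correct and follows essentially the same route as the paper: the truncation $|x|^{-1}=V_\eps+Y_\eps$, H\"older's inequality with exponents $p,q$, Varadhan's lemma for the bounded continuous $\widetilde H_\eps$ via Corollary~\ref{continuity} and the LDP of Theorem~\ref{ThmLDP}, Lemma~\ref{L4.2} to kill the singular part, then $\eps\downarrow 0$ followed by $p\downarrow 1$. Your explicit scaling computation $\sup_\xi[p\widetilde H-\widetilde I]=p^2\widetilde\rho$ is a useful elaboration that the paper omits here (it appears only in the subsequent lemma), but the argument is otherwise the same, including the order of limits and the use of monotone convergence in $\eps$.
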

\begin{proof}
We fix a closed set $F\subset \widetilde{\mathcal X}$. Then, by definition,
\begin{equation}\label{rewrite}
\begin{aligned}
\widehat {\mathbb Q}_t(F)&= \widehat{\P}_t( \widetilde L_t\in F)\\
&= \frac{\E^{\mathbb Q_t}\bigg\{ \exp\big\{\frac 1t\int_0^t\int_0^t \frac 1{|W_\sigma-W_s|} \d\sigma \d s \big\} \, \1_F\bigg\}}{\E^{\mathbb Q_t}\bigg\{ \exp\big\{\frac 1t\int_0^t\int_0^t \frac 1{|W_\sigma-W_s|} \d\sigma \d s \big\}\bigg\}},
\end{aligned}
\end{equation}
where $\mathbb Q_t$ is the distribution of $\widetilde L_t$ in $\widetilde\Mcal_1(\R^d)$. We first handle the numerator. The denominator will be taken care of similarly. Recall the decomposition \eqref{coulomb}. Then 
with $\frac{1}{p}+\frac{1}{q}=1$ and H\"older's inequality, the numerator becomes 
\begin{align*}
\int_F& \exp\bigg[\frac{1}{t}\int_0^t\int_0^t \bigg\{V_\eps(|W_s-W_\sigma|)+Y_\eps(|W_s-W_\sigma|)\bigg\}\d\sigma\,\d s\bigg] d\mathbb Q_t\\
&\le\bigg[ \int_F \exp\bigg\{\frac{1}{t}\int_0^t\int_0^t p\,V_\eps(|W_s-W_\sigma|) \d\sigma\,\d s\bigg\}  d\mathbb Q_t\bigg]^\frac{1}{p}\\
&\qquad\times \bigg[\int_F \exp\bigg\{\frac{1}{t}\int_0^t\int_0^t  q\,Y_\eps(|W_s-W_\sigma|)]\d\sigma\,\d s\bigg\} d\mathbb Q_t\bigg]^\frac{1}{q}\\
\end{align*}
Taking logarithm, dividing by $t$, passing to $\limsup_{t\to\infty}$ and followed by $\eps\to 0$,
\begin{align*}
\limsup_{t\to\infty}&\frac{1}{t}\log \int_F \exp\bigg[\frac{1}{t}\int_0^t\int_0^t \frac{1}{|W_s-W_\sigma|}\d\sigma\,\d s\bigg] \d\mathbb Q_t\\
&\le \limsup_{\eps\to 0}\frac{1}{p}\limsup_{t\to\infty}\frac{1}{t}\log \int_F \exp\bigg[\frac{1}{t}\int_0^t\int_0^t p\,V_\eps(|W_s-W_\sigma|)\d\sigma\,\d s\bigg] \d\mathbb Q_t\\
&\qquad +\limsup_{\eps\to 0}\frac{1}{q}\limsup_{t\to\infty}\frac{1}{t}\log \int_F \exp\bigg[\frac{1}{t}\int_0^t\int_0^t  q\,Y_\eps(|W_s-W_\sigma|)\d\sigma\,\d s\bigg] \d\mathbb Q_t.
\end{align*}
By Lemma \ref{L4.2} the second term is $0$. For the first term, since for every $\eps>0$, $V_\eps \in \mathcal F_2$, by Corollary \ref{continuity}, Proposition \ref{XLDPub} and Varadhan's lemma, 
$$
\begin{aligned}
\limsup_{t\to\infty}\frac{1}{t}\log &\int_F \exp\bigg[\frac{1}{t}\int_0^t\int_0^t p\,V_\eps(|W_s-W_\sigma|)\d\sigma\,\d s\bigg] \d\mathbb Q_t \\
&\leq \sup_{\xi\in F}\bigg[\sum_j \int_{\R^3}\int_{\R^3} pV_\eps (x-y)\psi^2_j(x)\psi^2_j(y) \d x\d y-\frac{1}{2}\sum_j \|\nabla \psi_j\|^2 \bigg],
\end{aligned}
$$
where $\xi=\{\widetilde\alpha_j\}$ and $\alpha_j(\d x)= \psi_j^2(x)\d x$ with $\sum_j \int_{\R^3} \psi_j^2(x)\d x\leq 1$.

Since  $V_\eps(x)\le \frac{1}{|x|}$ and $V_\eps(x)\to \frac{1}{|x|}$ as $\eps\to 0$, for any $p>1$,
\begin{align*}
\lim_{\eps\to 0}&\sup_{\xi\in F}\bigg[\sum_j \int_{\R^3}\int_{\R^3} pV_\eps (x-y)\psi^2_j(x)\psi^2_j(y) \d x\d y-\frac{1}{2}\sum_j \|\nabla \psi_j\|^2 \bigg]\\
&=\sup_{\xi\in F}\bigg[\sum_j \int_{\R^3}\int_{\R^3} p\,\frac{1}{|x-y|}\psi^2_j(x)\psi^2_j(y) \d x\d y-\frac{1}{2}\sum_j\|\nabla \psi_j\|^2\bigg].
\end{align*}
We can  now let $p\to 1$ and obtain
\begin{equation}\label{numub}
\begin{aligned}
\limsup_{t\to\infty}&\frac{1}{t}\log \int_F \exp\bigg[\frac{1}{t}\int_0^t\int_0^t \frac{1}{|W_s-W_\sigma|}\d\sigma\,\d s\bigg] \d\mathbb Q_t\\
&\le\sup_{\xi\in F}\bigg[\sum_j \int_{\R^3}\int_{\R^3} \,\frac{1}{|x-y|}\psi^2_j(x)\psi^2_j(y) \d x\d y-\frac{1}{2}\sum_j\|\nabla \psi_j\|^2\bigg].
\end{aligned}
\end{equation}
 The lower bound
 \begin{equation}\label{numlb}
\begin{aligned}
\limsup_{t\to\infty}&\frac{1}{t}\log \int_G \exp\bigg[\frac{1}{t}\int_0^t\int_0^t \frac{1}{|W_s-W_\sigma|}\d\sigma\,\d s\bigg] \d\mathbb Q_t\\
&\ge \sup_{\xi\in G}\bigg[\sum_j \int_{\R^3}\int_{\R^3} \,\frac{1}{|x-y|}\psi^2_j(x)\psi^2_j(y) \d x\d y-\frac{1}{2}\sum_j\|\nabla \psi_j\|^2\bigg],
\end{aligned}
\end{equation}
for open sets $G\subset \widetilde{\mathcal X}$ follows immediately from Lemma \ref{XLDPlb}. This derives the asymptotic behavior of the numerator in \eqref{rewrite}.
For the denominator, we invoke \eqref{numub} for $F=\widetilde{\mathcal X}$ and \eqref{numlb} for $G=\widetilde{\mathcal X}$ to deduce 
\begin{align}
\lim_{t\to\infty}&\frac{1}{t}\log \int_{\widetilde{\mathcal X}}  \exp\bigg[\frac{1}{t}\int_0^t\int_0^t \frac{1}{|W_s-W_\sigma|}\d\sigma\,\d s\bigg] \d\mathbb Q_t\notag\\
&= \sup_{\xi\in \widetilde{\mathcal X}}\bigg[\sum_j \int_{\R^3}\int_{\R^3} \,\frac{1}{|x-y|}\psi^2_j(x)\psi^2_j(y) \d x\d y-\frac{1}{2}\sum_j\|\nabla \psi_j\|^2\bigg]\label{eq4.1}\\
&=\widetilde\rho\notag.
\end{align}
We apply \eqref{numub}, \eqref{numlb} and \eqref{eq4.1} to \eqref{rewrite}. The theorem is proved.
\end{proof}
We need a lemma here to complete the proof of  Theorem \ref{T4.1}.
\begin{lemma}
The supremum in \eqref{eq4.1} is attained  only when $\xi$ consists of a single orbit $\widetilde{\mu}$ with $\mu(\d x)=\psi^2(x)\d x$  for a unique radially symmetric $\psi$ and $\int_{\R^3} \psi(x)^2 \d x=1$.
\end{lemma}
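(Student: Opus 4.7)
The plan is to reduce the multi-orbit variational problem for $\widetilde\rho$ to Lieb's single-orbit problem \eqref{varfor} via an elementary scaling argument, and then invoke Lieb's uniqueness theorem to pin down the maximizer.

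First, I would parametrize any candidate $\xi = \{\widetilde\alpha_j\} \in \widetilde{\mathcal X}$ by the masses $p_j := \alpha_j(\R^3) = \int \psi_j^2$, so $\sum_j p_j \le 1$, and, for $p_j > 0$, write $\psi_j = \sqrt{p_j}\,\phi_j$ with $\|\phi_j\|_2 = 1$. Introducing
$$
A(\phi) := \int\int_{\R^3 \times \R^3} \frac{\phi^2(x)\phi^2(y)}{|x-y|}\,\d x\,\d y, \qquad B(\phi) := \|\nabla\phi\|_2^2,
$$
the functional inside the definition of $\widetilde\rho$ becomes
$$
\Phi(\xi) = \sum_j \Bigl(p_j^2\, A(\phi_j) - \tfrac{1}{2}\, p_j\, B(\phi_j)\Bigr).
$$

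The key technical step is the Coulomb scaling in $\R^3$: for $\|\phi\|_2 = 1$ and $\lambda > 0$, the rescaled function $\phi_\lambda(x) := \lambda^{3/2}\phi(\lambda x)$ still has unit $L^2$-norm and satisfies $A(\phi_\lambda) = \lambda A(\phi)$ and $B(\phi_\lambda) = \lambda^2 B(\phi)$. Maximizing $p_j^2 \lambda A(\phi) - \tfrac{1}{2} p_j \lambda^2 B(\phi)$ over $\lambda > 0$ produces the value $p_j^3 A^2(\phi) / (2 B(\phi))$. Specializing this identity to $p_j = 1$ rewrites Lieb's supremum as
$$
\rho_0 := \sup_{\|\psi\|_2 = 1} \Bigl(A(\psi) - \tfrac{1}{2} B(\psi)\Bigr) = \sup_{\|\phi\|_2 = 1} \frac{A^2(\phi)}{2 B(\phi)},
$$
and yields the bound $\Phi(\xi) \le \rho_0 \sum_j p_j^3$. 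Since $p_j \in [0,1]$ forces $p_j^3 \le p_j$, the constraint $\sum_j p_j \le 1$ gives $\widetilde\rho \le \rho_0$; the reverse inequality is immediate by testing with the single-orbit configuration $\{\widetilde\mu_0\}$, so $\widetilde\rho = \rho_0$.

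Finally, suppose $\widetilde\rho$ is attained at some $\xi = \{\widetilde\alpha_j\}$. Then equality must hold throughout the chain $\Phi(\xi) \le \rho_0 \sum_j p_j^3 \le \rho_0 \sum_j p_j \le \rho_0$. Since $p^3 < p$ strictly on $(0,1)$, each $p_j$ must lie in $\{0,1\}$, and together with $\sum_j p_j = 1$ this forces exactly one index $j_0$ with $p_{j_0} = 1$ and all other orbits absent. The surviving density $\phi_{j_0}$ must then realize $\rho_0$ in the single-orbit Lieb problem, and the uniqueness theorem of Lieb \cite{L} forces $\phi_{j_0}$ to coincide, up to spatial translation, with his radially symmetric maximizer $\psi_0$. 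The main obstacle is the clean identification of the cubic weighting $p_j^3$ produced by the Coulomb scaling; once that is in hand, the elementary inequality $p^3 \le p$ on $[0,1]$ combined with Lieb's uniqueness yields the full statement.
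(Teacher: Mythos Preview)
Your argument is correct and follows essentially the same route as the paper: both exploit the Coulomb scaling in $\R^3$ to show that the optimal value with total mass $m$ equals $\rho_0\,m^3$ (the paper via the dilation $\psi\mapsto\sigma^2\psi(\sigma\,\cdot)$, you via the $L^2$-preserving dilation $\phi\mapsto\lambda^{3/2}\phi(\lambda\,\cdot)$ and optimizing in $\lambda$), and then use the strict inequality $p^3<p$ on $(0,1)$ (equivalently, $\rho(m_1+m_2)>\rho(m_1)+\rho(m_2)$) to force a single orbit of full mass, after which Lieb's uniqueness theorem identifies $\psi$.
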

\begin{proof} If we rescale with $\psi(x)$ being replaced by $\sigma^2 \psi(\sigma x)$, the expression
$$\sigma^8\int_{\R^3}\int_{\R^3} \frac{1}{|x-y|}\psi^2(\sigma x)\psi^2(\sigma y) \d x\d y-\sigma^6 \frac{1}{2}\int_{\R^3}|\nabla \psi(\sigma x)|^2 \d x$$
becomes
$$
\sigma^3\int_{\R^3}\int_{\R^3} \frac{1}{|x-y|}\psi^2(x)\psi^2(y) \d x\d y-\frac{1}{2}\sigma^3\int|\nabla \psi(x)|^2 \d x
$$
while the mass $\sigma^4 \int_{\R^3}  \psi^2(\sigma x) \d x$  becomes  $\sigma  \int_{\R^3}  \psi^2(x) \d x$. Therefore if we define

$$
\rho(m)=\sup_{\int_{\R^3} h^2(x)\d x=m}\bigg[ \int_{\R^3}\int_{\R^3} \frac{1}{|x-y|}\psi^2(x)\psi^2(y) \d x\d y-\frac{1}{2}\int_{\R^3} |\nabla \psi(x)|^2 \d x\bigg]$$
then $\rho(m)=Cm^3$. In particular $\rho(m_1+m_2)>\rho(m_1)+\rho(m_2)$ proving that supremum in \eqref{eq4.1} is attained at a single orbit $\xi=\{\widetilde\mu\}$ of total mass $\mu(\R^3)=1$. According to Lieb's theorem (see
\cite{L}), the function $\psi$ that maximizes 
$$
\bigg[ \int_{\R^3}\int_{\R^3} \frac{1}{|x-y|}\psi^2(x)\psi^2(y) \d x\d y-\frac{1}{2}\int_{\R^3} |\nabla \psi(x)|^2 \d x\bigg]$$
subject to  $\int_{\R^3} \psi^2(x)\d x=1$ is unique up to translation.
\end{proof}

{\bf{Acknowledgement.}} The first author would like to thank Erwin Bolthausen (Zurich) and Wolfgang Koenig (Berlin) for many helpful discussions. The second author was supported
partially by NSF grant DMS 1208334. Both authors would like to thank three anonymous referees whose input led to a more elaborate version of the present manuscript.

\end{document}